
\documentclass{amsart}
\usepackage{amsmath}
\usepackage{amssymb}
\usepackage{amsthm}

\DeclareMathOperator{\tr}{tr}
\DeclareMathOperator{\Tr}{Tr}

\DeclareMathOperator{\dvol}{dvol}

\DeclareMathOperator{\Ric}{Ric}

\DeclareMathOperator{\Dom}{Dom}

\newcommand{\oJ}{\overline{J}}

\newcommand{\oP}{\overline{P}}

\newcommand{\oX}{\overline{X}}

\newcommand{\oDelta}{\overline{\Delta}}

\newcommand{\onabla}{\overline{\nabla}}

\newcommand{\hg}{\widehat{g}}
\newcommand{\hr}{\widehat{r}}

\newcommand{\hB}{\widehat{B}}

\newcommand{\hP}{\widehat{P}}

\newcommand{\hT}{\widehat{T}}
\newcommand{\hmB}{\widehat{\mathcal{B}}}

\newcommand{\hnabla}{\widehat{\nabla}}
\newcommand{\hdelta}{\widehat{\delta}}

\newcommand{\heta}{\widehat{\eta}}
\newcommand{\hrho}{\widehat{\rho}}

\newcommand{\lp}{\langle}
\newcommand{\rp}{\rangle}
\newcommand{\lv}{\lvert}
\newcommand{\rv}{\rvert}
\newcommand{\lV}{\lVert}
\newcommand{\rV}{\rVert}



\newcommand{\mB}{\mathcal{B}}
\newcommand{\mC}{\mathcal{C}}
\newcommand{\mD}{\mathcal{D}}
\newcommand{\mE}{\mathcal{E}}

\newcommand{\mH}{\mathcal{H}}

\newcommand{\mP}{\mathcal{P}}
\newcommand{\mQ}{\mathcal{Q}}

\newcommand{\kB}{\mathfrak{B}}

\newcommand{\bN}{\mathbb{N}}

\newcommand{\bR}{\mathbb{R}}


\newcommand{\suchthat}{\mathrel{}\middle|\mathrel{}}


\def\sideremark#1{\ifvmode\leavevmode\fi\vadjust{\vbox to0pt{\vss
 \hbox to 0pt{\hskip\hsize\hskip1em
 \vbox{\hsize3cm\tiny\raggedright\pretolerance10000
 \noindent #1\hfill}\hss}\vbox to8pt{\vfil}\vss}}}

\newcommand{\comment}[1]{}

\newtheorem{thm}{Theorem}[section]
\newtheorem{prop}[thm]{Proposition}
\newtheorem{lem}[thm]{Lemma}
\newtheorem{cor}[thm]{Corollary}

\theoremstyle{definition}
\newtheorem{defn}[thm]{Definition}

\theoremstyle{remark}

\numberwithin{equation}{section}

\begin{document}

\title{Some energy inequalities involving fractional GJMS operators}
\author{Jeffrey S. Case}
\address{109 McAllister Building \\ Penn State University \\ University Park, PA 16802}
\email{jscase@psu.edu}
\keywords{fractional Laplacian; fractional GJMS operator; Poincar\'e--Einstein manifold; Robin operator; smooth metric measure space}
\subjclass[2000]{Primary 58J32; Secondary 53A30, 58J40}
\begin{abstract}
Under a spectral assumption on the Laplacian of a Poincar\'e--Einstein manifold, we establish an energy inequality relating the energy of a fractional GJMS operator of order $2\gamma\in(0,2)$ or $2\gamma\in(2,4)$ and the energy of the weighted conformal Laplacian or weighted Paneitz operator, respectively.  This spectral assumption is necessary and sufficient for such an inequality to hold.  We prove the energy inequalities by introducing conformally covariant boundary operators associated to the weighted conformal Laplacian and weighted Paneitz operator which generalize the Robin operator.  As an application, we establish a new sharp weighted Sobolev trace inequality on the upper hemisphere.
\end{abstract}
\maketitle

\section{Introduction}
\label{sec:intro}

Fractional GJMS operators are conformally covariant pseudodifferential operators defined on the boundary of a Poincar\'e--Einstein manifold via scattering theory which have principal symbol equal to that of the fractional powers of the Laplacian~\cite{GrahamZworski2003}.  Fractional GJMS operators can also be understood as generalized Dirichlet-to-Neumann operators associated to weighted GJMS operators of a suitable order defined in the interior~\cite{BransonGover2001,CaffarelliSilvestre2007,CaseChang2013,ChangGonzalez2011,Yang2013}.  In particular, one can identify the energy associated to a fractional GJMS operator with the energy associated to a suitable weighted GJMS operator when restricted to canonical extensions; see~\cite{CaffarelliSilvestre2007,Yang2013} for the flat case and~\cite{CaseChang2013,ChangGonzalez2011} for the curved case.

In this article, we are interested in obtaining, as a generalization of known results in the flat case~\cite{Yang2013}, a general relationship between the energy associated to a fractional GJMS operator and the energy associated to a suitable weighted GJMS operator for arbitrary extensions.  One reason for this interest is the role of such relationships in establishing sharp Sobolev trace inequalities (cf.\ \cite{AcheChang2015,Escobar1988}) and in studying the fractional Yamabe problem (cf.\ \cite{Escobar1992a,GonzalezQing2010}).  Indeed, this article is partly motivated by a subtle issue which arises in the works of Escobar~\cite{Escobar1992a,Escobar1992ac} and Gonzalez--Qing~\cite{GonzalezQing2010} on the fractional Yamabe problem of order $\gamma\in(0,1)$.  In both works, one tries to find a metric on a compact manifold with boundary which is scalar flat in the interior and for which the boundary has constant mean curvature (in a sense made precise in Section~\ref{sec:boundary}) by minimizing an energy functional in the interior subject to a volume-normalization on the boundary.  However, there is no guarantee that the energy functional is bounded below within this class, an issue overlooked in~\cite{Escobar1992a,GonzalezQing2010} and corrected in the special case $\gamma=1/2$ in~\cite{Escobar1992ac}.  Proposition~\ref{prop:bounded_below} corrects this issue by giving a spectral condition under which the energy functional is bounded below.

The main results of this article are the following two theorems.  These results establish, under spectral assumptions on a Poincar\'e--Einstein manifold, energy inequalities on suitable compactifications of the Poincar\'e--Einstein manifold which relate the energy of the weighted conformal Laplacian and the weighted Paneitz operator to the energy of the fractional GJMS operators $P_{2\gamma}$ in the cases $\gamma\in(0,1)$ and $\gamma\in(1,2)$, respectively.  That equality holds for the special extensions $U$ was established by Chang and the author~\cite{CaseChang2013}.

\begin{thm}
 \label{thm:intro_inequality}
 Fix $\gamma\in(0,1)$ and set $m=1-2\gamma$.  Let $(X^{n+1},M^n,g_+)$ be a Poincar\'e--Einstein manifold satisfying $\lambda_1(-\Delta_{g_+})>\frac{n^2}{4}-\gamma^2$.  Let $r$ be a geodesic defining function for $M$ and let $\rho$ be a defining function such that, asymptotically near $M$,
 \[ \rho = r + \Phi r^{1+2\gamma} + o(r^{1+2\gamma}) \]
 for some $\Phi\in C^\infty(M)$.  Fix $f\in C^\infty(M)$ and denote by $\mD_f^\gamma$ the set of functions $U\in C^\infty(X)\cap C^0(\oX)$ such that, asymptotically near $M$,
 \[ U = f + \psi\rho^{2\gamma} + o(\rho^{2\gamma}) \]
 for some $\psi\in C^\infty(M)$.  Set $g=\rho^2g_+$ and $h=g\rv_{TM}$.  Then
 \begin{multline}
  \label{eqn:intro_inequality}
  \int_X \left(\lv\nabla U\rv^2 + \frac{m+n-1}{2}J_\phi^m U^2\right) \rho^m\,\dvol_g \\ \geq -\frac{2\gamma}{d_\gamma}\left[ \oint_M f\,P_{2\gamma}f\,\dvol_h - \frac{n-2\gamma}{2}d_\gamma\oint_M\Phi f^2\dvol_h \right]
 \end{multline}
 for all $U\in\mD_f^\gamma$, where $J_\phi^m$ is the weighted scalar curvature of $(\oX,g,\rho,m,1)$.  Moreover, equality holds if and only if $L_{2,\phi}^mU=0$.
\end{thm}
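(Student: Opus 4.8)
The plan is to reduce the inequality to a positivity/energy-identity statement for the weighted conformal Laplacian $L_{2,\phi}^m$ on $(\oX,g,\rho,m,1)$, with the sharp constant tracked via the asymptotics of $\rho$ and the scattering-theoretic normalization of $P_{2\gamma}$. First I would recall from \cite{CaseChang2013} the basic facts: for $\gamma\in(0,1)$ and $m=1-2\gamma$, the weighted GJMS operator in the interior is the weighted conformal Laplacian $L_{2,\phi}^m = -\divsymb(\rho^m\nabla\,\cdot\,) + \tfrac{m+n-1}{2}J_\phi^m\rho^m$, the Dirichlet form of which is exactly the left-hand side of \eqref{eqn:intro_inequality}; and that for the canonical extension $U_f$ solving $L_{2,\phi}^m U_f=0$ with boundary value $f$, the energy equals (up to the explicit constant $-2\gamma/d_\gamma$) the quantity $\oint_M f\,P_{2\gamma}f\,\dvol_h$ possibly corrected by the $\Phi$-term coming from the fact that $\rho$ is not a geodesic defining function. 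The spectral hypothesis $\lambda_1(-\Delta_{g_+})>\tfrac{n^2}{4}-\gamma^2$ is precisely what guarantees that this canonical extension exists, is unique in the relevant function space, and that the boundary pairing is well-defined (this is the Graham--Zworski/Mazzeo--Vasy scattering range).

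The heart of the argument is then the following decomposition. Given $U\in\mD_f^\gamma$, write $U = U_f + V$ where $U_f$ is the canonical extension and $V$ has boundary value $0$. I would show the cross term in the Dirichlet form vanishes by integration by parts: $\int_X \big(\langle\nabla U_f,\nabla V\rangle + \tfrac{m+n-1}{2}J_\phi^m U_f V\big)\rho^m\,\dvol_g = \int_X V\,(L_{2,\phi}^m U_f)\,\rho^m\,\dvol_g + (\text{boundary term})$. The interior integral vanishes since $L_{2,\phi}^m U_f=0$, and the boundary term vanishes because $V|_M=0$ and the weight $\rho^m$ provides the correct decay (here one uses the prescribed asymptotics $U = f + \psi\rho^{2\gamma}+o(\rho^{2\gamma})$ and the matching expansion of $\rho$, exactly as in the derivation of the conformally covariant boundary operator / generalized Robin operator referenced in the abstract; the boundary term is a multiple of $\oint_M \psi_V\cdot 0\,\dvol_h$ plus lower-order pieces that all carry a factor of the vanishing boundary value). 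Consequently
\[
 \int_X \Big(\lv\nabla U\rv^2 + \tfrac{m+n-1}{2}J_\phi^m U^2\Big)\rho^m\,\dvol_g = \int_X \Big(\lv\nabla U_f\rv^2 + \tfrac{m+n-1}{2}J_\phi^m U_f^2\Big)\rho^m\,\dvol_g + \int_X \Big(\lv\nabla V\rv^2 + \tfrac{m+n-1}{2}J_\phi^m V^2\Big)\rho^m\,\dvol_g.
\]
The first term on the right equals the right-hand side of \eqref{eqn:intro_inequality} by the equality case of \cite{CaseChang2013} (with the $\Phi$-correction), so the inequality is equivalent to nonnegativity of the second term, i.e.\ of the Dirichlet form on functions vanishing on $M$.

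For that nonnegativity I would argue that the spectral hypothesis on $g_+$ transfers to a Poincar\'e-type inequality for the weighted energy: functions $V$ with $V|_M=0$, pushed forward to the Poincar\'e--Einstein side via $V = \rho^{(n-2\gamma)/2}v$ (the conformal weight relating $g$ and $g_+$), satisfy $\int_X(\lv\nabla V\rv^2 + \tfrac{m+n-1}{2}J_\phi^m V^2)\rho^m\,\dvol_g = \int_X\big(\lv\nabla_{g_+} v\rv^2 - (\tfrac{n^2}{4}-\gamma^2)v^2\big)\dvol_{g_+}$, which is $\ge 0$ exactly when $\lambda_1(-\Delta_{g_+})\ge \tfrac{n^2}{4}-\gamma^2$, with strict inequality unless $V\equiv 0$ under the strict spectral bound. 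This is the step I expect to be the main obstacle: one must justify the change of variables and the integration by parts at the conformal infinity carefully, since $v$ need not decay and the two energies are only formally equal — the correct statement requires the boundary terms at $M$ to cancel, which is where the hypothesis $U\in C^0(\oX)$, the precise order $\rho^{2\gamma}$ of the subleading term, and the range $\gamma\in(0,1)$ (so that $m>-1$ and the weight is locally integrable) all get used. Granting this, equality in \eqref{eqn:intro_inequality} forces the Dirichlet form of $V$ to vanish, hence $V\equiv 0$, hence $U=U_f$ and $L_{2,\phi}^m U=0$; conversely if $L_{2,\phi}^m U=0$ then $U=U_f$ by uniqueness of the canonical extension, giving equality. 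This establishes the theorem.
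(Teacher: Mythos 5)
Your proposal is correct in outline and captures the essential ideas, but it is organized somewhat differently from the paper's argument, so let me compare. You decompose $U = U_f + V$ with $U_f$ the canonical extension and $V$ vanishing on $M$, show the cross-term in the Dirichlet form vanishes, identify the $U_f$-energy with the right-hand side via the extension theorem of \cite{CaseChang2013}, and prove the $V$-energy is nonnegative via the conformal change of variables to $g_+$ and the spectral hypothesis. The paper instead proceeds by first proving (Proposition~\ref{prop:bounded_below}) that the spectral hypothesis is \emph{equivalent} to $\mE_{2\gamma}$ being uniformly bounded below on $\mC_f^\gamma$ (this is where the variational characterization of $\lambda_1(L_{2,\phi}^m)$ and the factorization $(L_{2,\phi}^m)_{g_+} = -\Delta_{g_+}-\tfrac{n^2}{4}+\gamma^2$ enter), then taking a minimizer in $W^{1,2}(\oX,\rho^m\dvol)$, noting it solves $L_{2,\phi}^mU=0$, and evaluating the minimum via Theorem~\ref{thm:robin} and Proposition~\ref{prop:extension}. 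These are two packagings of the same energy argument; your route is a bit more explicit about the orthogonality, while the paper's route is engineered to reuse the boundary-operator machinery of Section~\ref{sec:boundary}, which is what makes the $\gamma\in(1,2)$ case tractable by the same template.

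Two things you correctly flagged as needing care are in fact handled by the paper's machinery, and you should be aware of how. First, the vanishing of the cross-term is not merely ``$V|_M=0$ and $\rho^m$ provides the decay''; what makes it precise is the identity~\eqref{eqn:robin_ibp} of Theorem~\ref{thm:robin}, which packages the integration by parts together with the $H_{2\gamma}$-term and shows $\mQ_{2\gamma}(U_f,V) = \int_X V\,L_{2,\phi}^m U_f + \oint_M B_0^{2\gamma}(V)B_{2\gamma}^{2\gamma}(U_f) = 0$ when $B_0^{2\gamma}(V)=0$. Second, your conformal change of variables to argue positivity of the $V$-energy has the substitution reversed: the covariance~\eqref{eqn:conformally_covariant2} with $e^\sigma=\rho$ gives $v=\rho^{(n-2\gamma)/2}V$ (not $V=\rho^{(n-2\gamma)/2}v$), so that for $V\in\mC_0^\gamma$ one has $v=O(\rho^{(n+2\gamma)/2})$ and the energy identity closes without spurious boundary contributions; in the paper this is exactly what justifies working with the completion $W^{1,2}_0(\oX,\rho^m\dvol)$ and the variational characterization of $\lambda_1(L_{2,\phi}^m)$. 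Once these two points are pinned down, your argument is sound and yields the theorem, including the equality case.
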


Note that the left-hand side of~\eqref{eqn:intro_inequality} is the Dirichlet energy of the weighted conformal Laplacian $L_{2,\phi}^m$ of $(\oX,g,\rho,m,1)$.  See Section~\ref{sec:bg} for a detailed explanation of the terminology and notation used in Theorem~\ref{thm:intro_inequality}.  The spectral condition in Theorem~\ref{thm:intro_inequality} holds for Poincar\'e--Einstein manifolds for which the conformal infinity $(M^n,[h])$ has nonnegative Yamabe constant~\cite{Lee1995}.

A key point is that the spectral assumption $\lambda_1\left(-\Delta_{g_+}\right)>\frac{n^2}{4}-\gamma^2$ is necessary; see Proposition~\ref{prop:bounded_below}.  This corrects the aforementioned mistake in~\cite{GonzalezQing2010}.  Observe also that the left-hand side of~\eqref{eqn:intro_inequality} involves the interior $L^2$-norm of $U$.  This contrasts with the sharp Sobolev trace inequalities of Jin and Xiong~\cite{JinXiong2012} which instead involve a boundary $L^2$-norm of $f=U\rv_M$: Given a Poincar\'e--Einstein manifold $(X^{n+1},M^n,g_+)$, a constant $\gamma\in(0,1)$, and a defining function $\rho$ as in Theorem~\ref{thm:intro_inequality}, there is a constant $A$ such that
\begin{equation}
 \label{eqn:jx}
 \int_X \lv\nabla U\rv^2\rho^{1-2\gamma}\dvol_g + A\oint_M f^2\dvol \geq S(n,\gamma)\left(\oint_M \lv f\rv^{\frac{2n}{n-2\gamma}}\right)^{\frac{n-2\gamma}{n}}
\end{equation}
for any $U\in \mD^\gamma:=\bigcup_f\mD_f^\gamma$, where $g=\rho^2g_+$, $f=U\rv_M$, and $S(n,\gamma)$ is the corresponding constant in the upper half space~\cite{GonzalezQing2010,JinXiong2012}.  Under the spectral assumption $\lambda_1(-\Delta_{g_+})>\frac{n^2}{4}-\gamma^2$, one can use the adapted defining function~\cite[Subsection~6.1]{CaseChang2013} in Theorem~\ref{thm:intro_inequality} to eliminate the interior $L^2$-norm of $U$; indeed, combining this with~\eqref{eqn:jx} yields the sharp fractional Sobolev inequality
\[ \oint_M f\,P_{2\gamma}f + A\oint_M f^2 \geq -\frac{d_\gamma}{2\gamma}S(n,\gamma)\left(\oint_M \lv f\rv^{\frac{2n}{n-2\gamma}}\right)^{\frac{n-2\gamma}{n}} \]
for all $f\in C^\infty(M)$ (cf.\ \cite{HebeyVaugon1996,JinXiong2012}).

\begin{thm}
\label{thm:intro_inequality2}
 Fix $\gamma\in(1,2)$ and set $m=3-2\gamma$.  Let $(X^{n+1},M^n,g_+)$ be a Poincar\'e--Einstein manifold satisfying $\lambda_1(-\Delta_{g_+})>\frac{n^2}{4}-(2-\gamma)^2$.  Let $r$ be a geodesic defining function for $M$ and let $\rho$ be a defining function such that, asymptotically near $M$,
 \[ \rho = r + \rho_2 r^3 + \Phi r^{1+2\gamma} + o(r^{1+2\gamma}) \]
 for some $\rho_2,\Phi\in C^\infty(M)$.  Fix $f\in C^\infty(M)$ and denote by $\mD_f^\gamma$ the set of functions $U\in C^\infty(X)\cap C^0(\oX)$ such that, asymptotically near $M$,
 \[ U = f + f_2\rho^2 + \psi\rho^{2\gamma} + o(\rho^{2\gamma}) \]
 for some $f_2,\psi\in C^\infty(M)$.  Set $g=\rho^2g_+$ and $h=g\rv_{TM}$.  Then for any $U\in\mD_f^\gamma$ it holds that
 \begin{multline}
  \label{eqn:intro_inequality2}
  \int_X\left(\left(\Delta_\phi U\right)^2 - \left(4P-(n-2\gamma+2)J_\phi^mg\right)(\nabla U,\nabla U) + \frac{n-2\gamma}{2}Q_\phi^m U^2\right) \\
  \geq \frac{8\gamma(\gamma-1)}{d_\gamma}\left(\oint_M f\,P_{2\gamma}f - \frac{n-2\gamma}{2}d_\gamma\oint_M \Phi f^2\right) ,
 \end{multline}
 where $P$ is the Schouten tensor of $g$, $J_\phi^m$ and $Q_\phi^m$ are the weighted scalar curvature and the weighted $Q$-curvature, respectively, of $(\oX,g,\rho,m,1)$, and integrals on $X$ and $M$ are evaluated with respect to $\rho^m\dvol_g$ and $\dvol_h$, respectively.  Moreover, equality holds if and only if $L_{4,\phi}^mU=0$.
\end{thm}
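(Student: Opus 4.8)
The argument runs parallel to that of Theorem~\ref{thm:intro_inequality}, with the weighted conformal Laplacian replaced by the weighted Paneitz operator $L_{4,\phi}^m$ and the generalized Robin operator replaced by the pair of conformally covariant boundary operators $B_1^{2\gamma},B_2^{2\gamma}$ associated to $L_{4,\phi}^m$ and constructed in Section~\ref{sec:boundary}. Write $\mE(U)$ for the left-hand side of~\eqref{eqn:intro_inequality2} and $\mE(\cdot,\cdot)$ for the associated symmetric bilinear form. The starting point is the weighted Green-type identity
\[
 \mE(U,V) = \int_X V\,L_{4,\phi}^m U\,\rho^m\dvol_g + \oint_M\left( (B_1^{2\gamma}U)\,(V\rv_M) + (B_2^{2\gamma}U)\,v_{2\gamma-2}\right)\dvol_h ,
\]
valid for $U$ and $V$ admitting expansions of the form $f+f_2\rho^2+\psi\rho^{2\gamma}+o(\rho^{2\gamma})$, where $v_{2\gamma-2}$ denotes the coefficient of $\rho^{2\gamma-2}$ in $V$. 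The content of this identity is that $0$ and $2\gamma-2$ are the ``Dirichlet'' indicial roots of $L_{4,\phi}^m$---it is $V\rv_M$ and $v_{2\gamma-2}$ that pair against $B_1^{2\gamma}U$ and $B_2^{2\gamma}U$---while $2$ and $2\gamma$ are the ``Neumann'' roots; in particular, since any $V\in\mD_f^\gamma$ has $v_{2\gamma-2}=0$, only $B_1^{2\gamma}$ contributes.

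First I would use the spectral hypothesis to produce a minimizer. Since $\lambda_1(-\Delta_{g_+})>\frac{n^2}{4}-(2-\gamma)^2>\frac{n^2}{4}-\gamma^2$, on the Poincar\'e--Einstein background $L_{4,\phi}^m$ factors as a composition of two second-order operators, each invertible between the appropriate spaces, so for every $f\in C^\infty(M)$ there is a unique $U_f\in\mD_f^\gamma$ with $L_{4,\phi}^m U_f=0$, namely the interior realization of the scattering extension of $f$ (cf.\ \cite{GrahamZworski2003,ChangGonzalez2011,CaseChang2013}). Evaluating the Green identity at $U=V=U_f$ and using $L_{4,\phi}^m U_f=0$ together with the value of $B_1^{2\gamma}$ on the scattering extension shows that $\mE(U_f)$ equals the right-hand side of~\eqref{eqn:intro_inequality2}; this is, up to the $\Phi$-term, the equality case established in~\cite{CaseChang2013}, the $\Phi$-term being the correction produced by passing from the geodesic defining function $r$ to $\rho$ (equivalently, the dependence of $B_1^{2\gamma}$ on the choice of defining function).

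Now fix $U\in\mD_f^\gamma$ and set $W=U-U_f$. Then $W\in C^\infty(X)\cap C^0(\oX)$ with $W\rv_M=0$ and with no $\rho^{2\gamma-2}$ term in its expansion, so the Dirichlet data of $W$ vanish and $W=O(\rho^2)$ near $M$. Bilinearity gives $\mE(U)=\mE(U_f)+2\mE(U_f,W)+\mE(W)$. By the Green identity and $L_{4,\phi}^m U_f=0$, the cross term
\[
 \mE(U_f,W) = \oint_M\left( (B_1^{2\gamma}U_f)\,(W\rv_M) + (B_2^{2\gamma}U_f)\,w_{2\gamma-2}\right)\dvol_h
\]
vanishes because $W\rv_M=0$ and $w_{2\gamma-2}=0$, so it remains to prove $\mE(W)\ge0$, with equality only if $W\equiv0$. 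Applying the Green identity to $W$ itself reduces this to $\int_X W\,L_{4,\phi}^m W\,\rho^m\dvol_g\ge0$. Transferring to the metric $g_+$---using that, on the Poincar\'e--Einstein background, $L_{4,\phi}^m$ is conjugate (by a power of $\rho$ and a constant) to $\square_1\square_2$, where $\square_i=-\Delta_{g_+}-c_i$ with $c_1=\frac{n^2}{4}-\gamma^2$ and $c_2=\frac{n^2}{4}-(2-\gamma)^2$---and writing $v=\rho^{\,n/2-\gamma}W$, the last integral equals a positive multiple of $\langle\square_1\square_2 v,v\rangle_{L^2(g_+)}$; the decay $W=O(\rho^2)$ is exactly what places $v$ in the form domain of $\square_1\square_2$ and kills all boundary contributions. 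Since $\gamma>1$ we have $c_1<c_2<\lambda_1(-\Delta_{g_+})$, so $\square_1$ and $\square_2$ are commuting positive self-adjoint operators and $(\mu-c_1)(\mu-c_2)>0$ throughout the spectrum of $-\Delta_{g_+}$; hence $\langle\square_1\square_2 v,v\rangle_{L^2(g_+)}\ge0$, with equality if and only if $v\equiv0$, i.e.\ $W\equiv0$, i.e.\ $U=U_f$, which is the unique element of $\mD_f^\gamma$ with $L_{4,\phi}^m U=0$. This proves~\eqref{eqn:intro_inequality2} and the characterization of the equality case.

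The delicate points, beyond the construction of $B_1^{2\gamma},B_2^{2\gamma}$ and the proof of the Green identity in Section~\ref{sec:boundary}, are two. First, the identification of $\mE(U_f)$ with the right-hand side of~\eqref{eqn:intro_inequality2} requires the precise action of $B_1^{2\gamma}$ on the scattering extension, including the correction term in $\Phi$; this is where the choice of defining function genuinely enters. Second, the conformal transfer of $\int_X W\,L_{4,\phi}^m W\,\rho^m\dvol_g$ to $\langle\square_1\square_2 v,v\rangle_{L^2(g_+)}$ must be carried out carefully: one needs the indicial analysis of $L_{4,\phi}^m$ at its four roots $0,2\gamma-2,2,2\gamma$ to see that the decay of $W$ suffices for $v$ to lie in the relevant form domain and for all boundary terms to drop---very possibly together with a density argument reducing to $W$ compactly supported in $X$. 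I expect this second point to be the main obstacle.
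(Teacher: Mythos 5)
Your proposal is correct and takes essentially the same approach as the paper's Theorem~\ref{thm:inequality2}: the Green identity of Theorem~\ref{thm:integral}, the scattering-extension evaluation of Corollary~\ref{cor:eval}, and positivity of the energy on functions with vanishing Dirichlet data via the factorization~\eqref{eqn:paneitz_factorization} and the spectral hypothesis (Proposition~\ref{prop:bounded_below2}). The paper packages the last step as existence of a minimizer together with its Euler--Lagrange characterization (the minimizer solves~\eqref{eqn:bvp_surprise}), whereas you decompose $U=U_f+W$ explicitly and prove $\mE(W)\geq 0$ directly by conjugating to $g_+$; the two formulations are equivalent.
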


Note that the left-hand side of~\eqref{eqn:intro_inequality2} is the Dirichlet energy of the weighted Paneitz operator $L_{4,\phi}^m$ of $(\oX,g,\rho,m,1)$.  See Section~\ref{sec:bg} for a detailed explanation of the terminology and notation used in Theorem~\ref{thm:intro_inequality}.  The spectral condition in Theorem~\ref{thm:intro_inequality2} holds for Poincar\'e--Einstein manifolds for which the conformal infinity $(M^n,[h])$ has nonnegative Yamabe constant~\cite{Lee1995}.

The proofs of Theorem~\ref{thm:intro_inequality} and Theorem~\ref{thm:intro_inequality2} rely on three observations.  First, we introduce conformally boundary covariant operators associated to the weighted conformal Laplacian and the weighted Paneitz operator in the same sense as the trace and Robin operators act as boundary operators associated to the conformal Laplacian (cf.\ \cite{Branson1997,BransonGover2001,Escobar1990,Escobar1992a}).  Second, we show that our conformally covariant operators recover certain scattering operators when acting on functions which lie in the kernel of the corresponding weighted GJMS operator on a Poincar\'e--Einstein manifold; this yields another approach to defining the fractional GJMS operators via extensions (cf.\ \cite{AcheChang2015,CaseChang2013,ChangGonzalez2011,GrahamZworski2003,GuillarmouGuillope2007}).  Third, using conformal covariance, we characterize when the left-hand sides of~\eqref{eqn:intro_inequality} and~\eqref{eqn:intro_inequality2} are uniformly bounded below in terms of spectral data for the metric $g_+$.  When these spectral conditions are met, the left-hand sides of~\eqref{eqn:intro_inequality} and~\eqref{eqn:intro_inequality2} can be minimized, and the identification of the minimizers follows from our extension theorem.

The second step in the above outline is a refinement of previous work of Chang and the author~\cite{CaseChang2013}.  In that work, it was shown that the fractional GJMS operators are generalized Dirichlet-to-Neumann operators for the weighted GJMS operators.  For example, under the assumptions of Theorem~\ref{thm:intro_inequality}, it was shown that if $L_{2,\phi}^mU=0$ and $U\rv_M=f$, then
\[ P_{2\gamma}f = -\frac{d_\gamma}{2\gamma}\lim_{\rho\to0}\left(\rho^m\eta U - \gamma(n-2\gamma)\Phi U\right); \]
see~\cite[Theorem~4.1]{CaseChang2013}.  In particular, equality holds in~\eqref{eqn:intro_inequality}.  The novelty introduced in this article is to realize the right-hand side of the above display as the evaluation of a conformally covariant boundary operator.  This also allows us to establish the energy inequality of Theorem~\ref{thm:intro_inequality}.  A similar comparison of our resuls to those of Chang and the author~\cite{CaseChang2013} holds in the case $\gamma\in(1,2)$.

As an application of our results, we establish a sharp Sobolev trace inequality on the standard upper hemisphere
\[ S_+^{n+1} := \left\{ x=(x_0,\dotsc, x_{n+1})\in\bR^{n+2} \colon x_{n+1}>0, \lv x\rv=1 \right\} \]
with the metric induced by the Euclidean metric.  To that end, let $\gamma\in(1,2)$ and set
\[ \mD^\gamma := \bigcup_{f\in C^\infty(S^n)} \mD_f^\gamma \]
for $\mD_f^\gamma$ determined by the defining function $x_{n+1}$ for $S^n=\partial S_+^{n+1}$ as in Theorem~\ref{thm:intro_inequality2}.

\begin{thm}
 \label{thm:sobolev}
 Fix $\gamma\in(1,2)$, choose $2\gamma<n\in\bN$, and let $(S_+^{n+1},d\theta^2)$ be the standard upper hemisphere.  Then
 \begin{multline}
  \label{eqn:sobolev}
  c_{n,\gamma}^{(2)}\left(\oint_{S^n} \lv f\rv^{\frac{2n}{n-2\gamma}}\dvol\right)^{\frac{n-2\gamma}{n}} \\ \leq \int_{S_+^{n+1}} \left[ \left(\Delta_\phi U\right)^2 + \frac{(n+3-2\gamma)^2-5}{2}\lv\nabla U\rv^2 + \frac{\Gamma\left(\frac{n+8-2\gamma}{2}\right)}{\Gamma\left(\frac{n-2\gamma}{2}\right)}U^2 \right] x_{n+1}^{3-2\gamma}\dvol
 \end{multline}
 for all $U\in\mD^\gamma$, where $f=U\rv_{S^n}$ and
 \[ c_{n,\gamma}^{(2)} = 8\pi^\gamma\frac{\Gamma(2-\gamma)}{\Gamma(\gamma)}\frac{\Gamma\left(\frac{n+2\gamma}{2}\right)}{\Gamma\left(\frac{n-2\gamma}{2}\right)}\left(\frac{\Gamma(n/2)}{\Gamma(n)}\right)^{\frac{2\gamma}{n}} . \]
 Moreover, equality holds if and only if
 \begin{equation}
  \label{eqn:paneitz_sobolev}
  \left(\Delta_\phi - \frac{(n+3-2\gamma)^2-1}{4}\right)\left(\Delta_\phi - \frac{(n+3-2\gamma)^2-9}{4}\right)U = 0
 \end{equation}
 and $f(x) = c\left(1+a\cdot x\right)^{-\frac{n-2\gamma}{2}}$ for some $c\in\bR$ and $a\in\bR^{n+1}$ with $\lv a\rv<1$.
\end{thm}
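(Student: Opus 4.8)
The plan is to apply Theorem~\ref{thm:intro_inequality2} to the specific Poincaré--Einstein manifold underlying the upper hemisphere and then feed the resulting boundary inequality into the known sharp fractional Sobolev inequality on $S^n$. First I would identify $(S_+^{n+1},d\theta^2)$ as the conformal compactification of hyperbolic space $\bH^{n+1}$ with conformal infinity the round sphere $(S^n,[h])$. Concretely, if $g_+$ is the hyperbolic metric on the ball model and $\rho$ is chosen so that $\rho^2 g_+$ is the hemisphere metric $d\theta^2$, then $\rho$ agrees with $x_{n+1}$ up to the normalization built into Theorem~\ref{thm:intro_inequality2}; one must check that this $\rho$ has the asymptotic expansion $\rho = r + \rho_2 r^3 + \Phi r^{1+2\gamma} + o(r^{1+2\gamma})$ required there, and compute $\Phi$. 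For the round sphere one expects $\Phi \equiv 0$ (the adapted/special defining function has no $r^{1+2\gamma}$ term because the obstruction vanishes by symmetry), so the $\oint_M \Phi f^2$ term in~\eqref{eqn:intro_inequality2} drops out. The spectral hypothesis $\lambda_1(-\Delta_{g_+}) > \tfrac{n^2}{4}-(2-\gamma)^2$ holds since $\lambda_1(-\Delta_{\bH^{n+1}}) = \tfrac{n^2}{4}$ and $(2-\gamma)^2>0$.

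Next I would make all the curvature quantities in~\eqref{eqn:intro_inequality2} explicit for $g = d\theta^2$. Since $(S_+^{n+1},d\theta^2)$ is Einstein with constant sectional curvature $1$, the Schouten tensor is $P = \tfrac{1}{2}g$, so $4P - (n-2\gamma+2)J_\phi^m g = (2 - (n-2\gamma+2)J_\phi^m)\,g$, and $\lv\nabla U\rv^2$ appears with a scalar coefficient; I would compute the weighted scalar curvature $J_\phi^m$ and weighted $Q$-curvature $Q_\phi^m$ of $(\oX, d\theta^2, x_{n+1}, 3-2\gamma, 1)$ from their definitions in Section~\ref{sec:bg}, using that $g$ is a space form. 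Matching the resulting constants against the displayed coefficients $\tfrac{(n+3-2\gamma)^2-5}{2}$ and $\tfrac{\Gamma((n+8-2\gamma)/2)}{\Gamma((n-2\gamma)/2)}$ is a (lengthy but mechanical) Gamma-function bookkeeping step. Thus Theorem~\ref{thm:intro_inequality2} specializes to
\[
 \int_{S_+^{n+1}} \left[ (\Delta_\phi U)^2 + \tfrac{(n+3-2\gamma)^2-5}{2}\lv\nabla U\rv^2 + \tfrac{\Gamma((n+8-2\gamma)/2)}{\Gamma((n-2\gamma)/2)}U^2\right] x_{n+1}^{3-2\gamma}\dvol \geq \tfrac{8\gamma(\gamma-1)}{d_\gamma}\oint_{S^n} f\,P_{2\gamma}f\,\dvol .
\]
It then remains to insert the sharp Sobolev inequality for the fractional GJMS operator $P_{2\gamma}$ on the round sphere $S^n$: by Beckner-type results (and the conformal factorization $P_{2\gamma} = \prod_{j=1}^{2}(\Delta_{S^n}+c_j)$ with explicit $c_j$), one has $\oint_{S^n} f\,P_{2\gamma}f \geq K(n,\gamma)\big(\oint_{S^n}\lv f\rv^{2n/(n-2\gamma)}\big)^{(n-2\gamma)/n}$ with equality exactly on the functions $f(x) = c(1+a\cdot x)^{-(n-2\gamma)/2}$. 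Chasing the constant $\tfrac{8\gamma(\gamma-1)}{d_\gamma}K(n,\gamma)$ through and simplifying Gamma functions should produce $c_{n,\gamma}^{(2)}$; I would verify this against the stated formula, including the $(\Gamma(n/2)/\Gamma(n))^{2\gamma/n}$ factor which comes from the volume of $S^n$ in the sharp sphere inequality.

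For the equality characterization, I would combine the two equality cases. Equality in the specialized Theorem~\ref{thm:intro_inequality2} forces $L_{4,\phi}^m U = 0$; on the space form $(S_+^{n+1}, d\theta^2)$ the weighted Paneitz operator factors as a product of two weighted (shifted) Laplacians, giving exactly~\eqref{eqn:paneitz_sobolev} — this factorization I would establish by the same conformal-covariance/space-form argument used for $J_\phi^m$ and $Q_\phi^m$, or by transplanting the known factorization of $L_{4,\phi}^m$ on the model weighted space. Equality in the sphere Sobolev inequality forces $f = U\rv_{S^n}$ to be of the stated form. Conversely, given such an $f$, the canonical extension $U$ (the one with $L_{4,\phi}^m U = 0$ and $U\rv_{S^n}=f$) realizes equality in both inequalities simultaneously, and one checks it solves~\eqref{eqn:paneitz_sobolev}.

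The main obstacle I expect is \emph{not} conceptual but computational: correctly pinning down the defining function $\rho$ versus $x_{n+1}$ (including confirming $\Phi\equiv 0$ and the precise normalization so that Theorem~\ref{thm:intro_inequality2} applies verbatim) and then carrying the chain of Gamma-function identities from $d_\gamma$, the sphere constant $K(n,\gamma)$, and the weighted curvatures all the way to the clean closed forms for the three coefficients and for $c_{n,\gamma}^{(2)}$. A secondary subtlety is verifying the space-form factorization of $L_{4,\phi}^m$ so that the abstract condition $L_{4,\phi}^m U = 0$ becomes the explicit fourth-order equation~\eqref{eqn:paneitz_sobolev}.
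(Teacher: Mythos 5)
Your proposal is correct and takes essentially the same route as the paper: the paper packages the energy inequality plus the sharp fractional Sobolev inequality on $S^n$ into Theorem~\ref{thm:sobolev_genl}, and then deduces Theorem~\ref{thm:sobolev} by observing that $g_+=x_{n+1}^{-2}d\theta^2$ is hyperbolic (so $x_{n+1}$ is a $\gamma$-admissible defining function with $\Phi\equiv 0$) and checking the explicit factorization $L_{4,\phi}^m=\bigl(-\Delta_\phi+\tfrac{(m+n)^2-1}{4}\bigr)\bigl(-\Delta_\phi+\tfrac{(m+n)^2-9}{4}\bigr)$ on $(S_+^{n+1},d\theta^2,x_{n+1},m,1)$. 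The two items you flag as the main obstacles (pinning down $\rho$ versus $x_{n+1}$ with $\Phi\equiv 0$, and the space-form factorization of $L_{4,\phi}^m$) are exactly the two facts the paper's short proof disposes of.
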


The corresponding result when $\gamma\in(0,1)$ is that
\[ c_{n,\gamma}^{(1)}\left(\oint_{S^n} \lv f\rv^{\frac{2n}{n-2\gamma}}\dvol\right)^{\frac{n-2\gamma}{n}} \leq \int_{S_+^{n+1}}\left[\lv\nabla U\rv^2 + \frac{\Gamma\left(\frac{n+4-2\gamma}{2}\right)}{\Gamma\left(\frac{n-2\gamma}{2}\right)}U^2\right]x_{n+1}^{1-2\gamma}\dvol \]
for all $U\in\mD^\gamma$ with trace $f=U\rv_{S^n}$, where
\[ c_{n,\gamma}^{(1)} = 2\pi^\gamma\frac{\Gamma(1-\gamma)}{\Gamma(\gamma)}\frac{\Gamma\left(\frac{n+2\gamma}{2}\right)}{\Gamma\left(\frac{n-2\gamma}{2}\right)}\left(\frac{\Gamma(n/2)}{\Gamma(n)}\right)^{\frac{2\gamma}{n}} . \]
This follows easily from~\cite[Corollary~5.3]{GonzalezQing2010} and conformal covariance.

The key observation in the proof of Theorem~\ref{thm:sobolev} is that the right-hand side of~\eqref{eqn:sobolev} is the energy of the weighted Paneitz operator on $(S_+^{n+1},d\theta^2,x_{n+1},m,1)$.  The relation to the $L^{\frac{2n}{n-2\gamma}}$-norm of the trace then follows from Theorem~\ref{thm:intro_inequality2} and the sharp fractional Sobolev inequality~\cite{Beckner1993,CotsiolisTavoularis2004,FrankLieb2012b,Lieb1983}.  In fact, Theorem~\ref{thm:sobolev} can be extended to a much more general class of functions $U$ and a large class of conformally flat metrics on the upper hemisphere; see Theorem~\ref{thm:sobolev_genl}.

This article is organized as follows:

In Section~\ref{sec:bg} we recall some facts about both the fractional GJMS operators as defined via scattering theory~\cite{GrahamZworski2003} and smooth metric measure spaces as used to study fractional GJMS operators via extensions~\cite{CaseChang2013}.

In Section~\ref{sec:boundary} we introduce conformally covariant boundary operators which, when coupled with the weighted conformal Laplacian and weighted Paneitz operator, are formally self-adjoint.

In Section~\ref{sec:asymptotics} we give formulae for our conformally covariant operators in terms of the asymptotics of compactifications of Poincar\'e--Einstein manifolds and thereby obtain new interpretations of the fractional GJMS operators via extensions.

In Section~\ref{sec:inequality} we give characterizations for when the left-hand sides of~\eqref{eqn:intro_inequality} and~\eqref{eqn:intro_inequality2} are uniformly bounded below and also state and prove more refined versions of Theorem~\ref{thm:intro_inequality} and Theorem~\ref{thm:intro_inequality2}.

In Section~\ref{sec:sobolev} we prove the more general version of Theorem~\ref{thm:sobolev}.

In Appendix~\ref{sec:appendix} we prove a family of Sobolev trace theorems which are relevant to this article and slightly different from the usual ones.

\subsection*{Acknowledgments}

I would like to thank Antonio Ache and Alice Chang for discussions relating to their article~\cite{AcheChang2015} which helped shape the investigations of Section~\ref{sec:asymptotics}.  I would also like to thank Rod Gover for pointing out the reference~\cite{Grant2003}, Mar\'ia del Mar Gonz\'alez for comments on an early draft of this article, and the referees for helpful comments which helped improve the clarity of the exposition.

\section{Background}
\label{sec:bg}

\subsection{Scattering theory}
\label{subsec:bg/scattering}

A \emph{Poincar\'e--Einstein manifold} is a triple $(X^{n+1},M^n,g_+)$ consisting of a complete Einstein manifold $(X^{n+1},g_+)$ with $\Ric(g_+)=-ng_+$ and $n\geq3$ such that $X$ is diffeomorphic to the interior of a compact manifold $\oX$ with boundary $M=\partial\oX$.  We further require the existence of a defining function for $M$; i.e.\ a smooth nonnegative function $\rho\colon\oX\to\bR$ such that $\rho^{-1}(0)=M$, the metric $g:=\rho^2g_+$ extends to a $C^{n-1,\alpha}$ metric on $\oX$, and $\lv d\rho\rv_g^2=1$ on $M$.  If $\rho$ is a defining function for $M$, then so too is $e^\sigma\rho$ for any $\sigma\in C^\infty(\oX)$, and hence only the conformal class $[g\rv_{TM}]$ on $M$ is well-defined.  An element $h\in[g\rv_{TM}]$ is a \emph{representative of the conformal boundary}, and to each such representative there is a defining function $r$, unique in a neighborhood of $M$ and called the \emph{geodesic defining function}, such that $g_+=r^{-2}\left(dr^2 + h_r\right)$ near $M$ for $h_r$ a one-parameter family of Riemannian metrics on $M$ with
\begin{align*}
 h_r & = h + h_{(2)}r^2 + \dotso + h_{(n-1)}r^{n-1} + kr^n + o(r^n), \quad\text{if $n$ is odd}, \\
 h_r & = h + h_{(2)}r^2 + \dotso + h_{(n-2)}r^{n-2} + h_{(n)}r^n\log r + kr^n + o(r^n), \quad\text{if $n$ is odd},
\end{align*}
where the terms $h_{(\ell)}$ for $\ell\leq n$ even are locally determined by $h$ while the term $k$ is nonlocal.  For example, $h_{(2)}=-\frac{1}{n-2}\left(\Ric_h-\frac{1}{2(n-1)}R_h\,h\right)$ is the negative of the \emph{Schouten tensor} of $h$.  For further details, including a discussion of optimal regularity, see~\cite{ChruscielDelayLeeSkinner2005} and the references therein.

Given a Poincar\'e--Einstein manifold $(X^{n+1},M^n,g_+)$, a representative $h$ of the conformal boundary, and a parameter $\gamma\in(0,\frac{n}{2})\setminus\bN$ such that $\frac{n^2}{4}-\gamma^2$ does not lie in the $L^2$-spectrum of $-\Delta_{g_+}$, we define the \emph{fractional GJMS operator} $P_{2\gamma}$ as follows:  Let $s=\frac{n}{2}+\gamma$.  For any $f\in C^\infty(M)$, there exists a unique solution $v$, denoted $\mP(\frac{n}{2}+\gamma)f$, of the generalized eigenvalue problem
\begin{subequations}
\label{eqn:poisson}
\begin{equation}
\label{eqn:poisson_equation}
-\Delta_{g_+}v - s(n-s)v = 0
\end{equation}
such that, asymptotically near $M$,
\begin{equation}
\label{eqn:poisson_expansion}
v = Fr^{n-s} + Gr^s
\end{equation}
\end{subequations}
for $F,G\in C^\infty(\oX)$ and $F\rv_M=f$.  Then
\begin{equation}
\label{eqn:fractional_gjms}
P_{2\gamma}f := d_\gamma G\rv_M \quad\text{for}\quad d_\gamma = 2^{2\gamma}\frac{\Gamma(\gamma)}{\Gamma(-\gamma)} .
\end{equation}
Among the key properties of the fractional GJMS operator $P_{2\gamma}\colon C^\infty(M)\to C^\infty(M)$ are that it is formally self-adjoint, that its principal symbol is that of $(-\Delta)^\gamma$, and that it is conformally covariant; indeed, if $\hat h=e^{2\sigma}h$ is another representative of the conformal boundary, then
\[ \hP_{2\gamma}\left(f\right) = e^{-\frac{n+2\gamma}{2}\sigma}P_{2\gamma}\left(e^{\frac{n-2\gamma}{2}\sigma}f\right) \]
for all $f\in C^\infty(M)$.  In fact, this definition extends to the cases $\gamma\in\bN$ by analytic continuation, and in these cases the operators $P_{2\gamma}$ recover the GJMS operators.  For further details, see~\cite{GrahamZworski2003}.

A useful fact about the solution $v$ of~\eqref{eqn:poisson} is that, up to order $r^{n/2}$, the Taylor series expansion of $F$ (resp.\ $G$) is even in $r$ and depends only on $h$ and $F\rv_M$ (resp.\ $G\rv_M$).  For example,
\begin{equation}
 \label{eqn:Fexpansion2}
 F = f + \frac{1}{4(1-\gamma)}\left(-\oDelta f + \frac{n-2\gamma}{2}\oJ f\right)r^2 + o(r^2),
\end{equation}
where $\oJ$ is the trace (with respect to $h$) of the Schouten tensor $\oP$ and we adopt the convention that barred operators are defined with respect to the boundary $(M^n,h)$.

The fractional GJMS operators $P_{2\gamma}$ can be interpreted as generalized Dirichlet-to-Neumann operators associated to weighted GJMS operators.  To state this precisely and in the widest generality in which we are interested requires a discussion of smooth metric measure spaces.

\subsection{Smooth metric measure spaces}
\label{subsec:bg/smms}

A \emph{smooth metric measure space} is a five-tuple $(\oX^{n+1},g,\rho,m,1)$ formed from a smooth manifold $\oX^{n+1}$ with (possibly empty) boundary $M^n=\partial\oX$, a Riemannian metric $g$ on $\oX$, a nonnegative function $\rho\in C^\infty(\oX)$ with $\rho^{-1}(0)=M$, and a dimensional constant $m\in(1-n,\infty)$.  Given such a smooth metric measure space, we always denote by $X$ the interior of $\oX$.  Heuristically, the interior of a smooth metric measure space represents the base of a warped product
\begin{equation}
\label{eqn:warped_product}
\left( X^{n+1}\times S^m, g\oplus\rho^2 d\theta^2\right)
\end{equation}
for $(S^m,d\theta^2)$ the $m$-sphere with a metric of constant sectional curvature one; this is the meaning of the $1$ as the fifth element of the five-tuple defining a smooth metric measure space.  The choice of the standard $m$-sphere allows us to partially compactify~\eqref{eqn:warped_product}, though not necessarily smoothly, by adding the boundary $M$ of $X$.  The model case is the upper half space $(\bR_+\times\bR^n,dy^2\oplus dx^2,y,m,1)$ for $y$ the coordinate on $\bR_+:=(0,\infty)$; in this case the warped product~\eqref{eqn:warped_product} is the flat metric on $\bR^{n+m+1}\setminus\{0\}$, and the partial compactification obtained from $[0,\infty)\times\bR^n$ is the whole of $\bR^{n+m+1}$.

The heuristic of passing through the warped product~\eqref{eqn:warped_product} is useful in that most geometric invariants defined on a smooth metric measure space --- and all which are considered in this article --- can be formally obtained by considering their Riemannian counterparts on~\eqref{eqn:warped_product} while restricting to the base $X$.  More precisely, when $m\in\bN$, the warped product~\eqref{eqn:warped_product} makes sense and one can define invariants on $X$ in terms of Riemannian invariants on~\eqref{eqn:warped_product} by means of the canonical projection $\pi\colon X^{n+1}\times S^m\to X^{n+1}$.  Invariants obtained in this way are polynomial in $m$, and can be extended to general $m\in(1-n,\infty)$ by treating $m$ as a formal variable.  This is illustrated by means of specific examples below.

The \emph{weighted Laplacian} $\Delta_\phi\colon C^\infty(X)\to C^\infty(X)$ is defined by
\[ \Delta_\phi U := \Delta U + m\rho^{-1}\lp\nabla\rho,\nabla U\rp . \]
This operator is formally self-adjoint with respect to the measure $\rho^m\dvol_g$; the notation $\Delta_\phi$ is used for consistency with the literature on smooth metric measure spaces, where one usually writes $\rho^m=e^{-\phi}$ and allows $m$ to become infinite.  In terms of~\eqref{eqn:warped_product}, one readily checks that $\pi^\ast\Delta_\phi U = \boldsymbol{\Delta}(\pi^\ast U)$ for $\boldsymbol{\Delta}$ the Laplacian of~\eqref{eqn:warped_product}.  The \emph{weighted Schouten scalar} $J_\phi^m$ and the \emph{weighted Schouten tensor} $P_\phi^m$ are the tensors
\begin{align*}
J_\phi^m & := \frac{1}{2(m+n)}\left(R - 2m\rho^{-1}\Delta\rho - m(m-1)\rho^{-2}\left(\lv\nabla\rho\rv^2-1\right)\right), \\
P_\phi^m & := \frac{1}{m+n-1}\left(\Ric - m\rho^{-1}\nabla^2\rho - J_\phi^m\right).
\end{align*}
Denoting by $\boldsymbol{P}$ the Schouten tensor of~\eqref{eqn:warped_product} and by $\boldsymbol{J}$ its trace, one readily checks that $\boldsymbol{J}=\pi^\ast J_\phi^m$ and that $P_\phi^m(Z,Z)=\boldsymbol{P}(\tilde Z,\tilde Z)$ for all $Z\in TX$, where $\tilde Z$ is the horizontal lift of $Z$ to $X\times S^m$.  The \emph{weighted conformal Laplacian} $L_{2,\phi}^m\colon C^\infty(X)\to C^\infty(X)$ and the \emph{weighted Paneitz operator} $L_{4,\phi}^m\colon C^\infty(X)\to C^\infty(X)$ are defined by
\begin{align*}
L_{2,\phi}^mU & := -\Delta_\phi U + \frac{m+n-1}{2}J_\phi^m U , \\
L_{4,\phi}^mU & := (-\Delta_\phi)^2U + \delta_\phi\left((4P_\phi^m-(m+n-1)J_\phi^mg)(\nabla U)\right) + \frac{m+n-3}{2}Q_\phi^mU
\end{align*}
where $\delta_\phi X=\tr_g\nabla X + m\rho^{-1}\lp X,\nabla\rho\rp$ is the negative of the formal adjoint of the gradient with respect to $\rho^m\dvol$,
\[ Q_\phi^m := -\Delta_\phi J_\phi^m - 2\lv P_\phi^m\rv^2 - \frac{2}{m}\left(Y_\phi^m\right)^2 + \frac{m+n-1}{2}\left(J_\phi^m\right)^2 \]
is the \emph{weighted $Q$-curvature}, and $Y_\phi^m=J_\phi^m-\tr_g P_\phi^m$.  Observe that the weighted conformal Laplacian and the weighted Paneitz operator are both formally self-adjoint with respect to $\rho^m\dvol$.  These definitions recover the conformal Laplacian and the Paneitz operator, respectively, of~\eqref{eqn:warped_product} when restricted to the base.

An important property of the weighted conformal Laplacian and the weighted Paneitz operator is that they are both conformally covariant.  Two smooth metric measure spaces $(\oX^{n+1},g,\rho,m,1)$ and $(\oX^{n+1},\hg,\hrho,m,1)$ are \emph{pointwise conformally equivalent} if there is a function $\sigma\in C^\infty(\oX)$ such that $\hg=e^{2\sigma}g$ and $\hrho=e^\sigma\rho$.  This is equivalent to requiring that the respective warped products~\eqref{eqn:warped_product} are pointwise conformally equivalent with conformal factor independent of $S^m$.  Under this assumption, it holds that
\begin{align}
\label{eqn:conformally_covariant2} \widehat{L_{2,\phi}^m}(U) & = e^{-\frac{m+n+3}{2}\sigma}L_{2,\phi}^m\left(e^{\frac{m+n-1}{2}\sigma}U\right), \\
\label{eqn:conformally_covariant4} \widehat{L_{4,\phi}^m}(U) & = e^{-\frac{m+n+5}{2}\sigma}L_{2,\phi}^m\left(e^{\frac{m+n-3}{2}\sigma}U\right)
\end{align}
for all $U\in C^\infty(X)$.

As defined above, the weighted conformal Laplacian and the weighted Paneitz operator are defined only in the interior of a smooth metric measure space.  The purpose of this article is to introduce and study boundary operators associated to the weighted conformal Laplacian and the Paneitz operator, respectively, which share their conformal covariance and formal self-adjointness properties.  To do this in such a way as to meaningfully study Poincar\'e--Einstein manifolds and the fractional GJMS operators requires us to allow weaker-than-$C^\infty$ regularity for both the metric $g$ and the function $\rho$ at the boundary of our smooth metric measure spaces.  This requires some definitions.

\begin{defn}
 \label{defn:geodesic}
 Let $(\oX^{n+1},g)$ be a Riemannian manifold with nonempty boundary $M=\partial\oX$.  Let $\gamma\in(0,n/2)\setminus\bN$ and set $k=\lfloor\gamma\rfloor$ and $m=1+2k-2\gamma$.  The smooth metric measure space $(\oX,g,r,m,1)$ is \emph{geodesic} if $\lv\nabla r\rv^2=1$ in a neighborhood of $M$ and if
 \begin{equation}
  \label{eqn:geodesic_metric}
  g = dr^2 + \sum_{j=0}^{k} h_{(2j)}r^{2j} + o(r^{2\gamma})
 \end{equation}
 for sections $h_{(0)},\dotsc,h_{(2k)}$ of $S^2T^\ast M$.
\end{defn}

The asymptotic expansion~\eqref{eqn:geodesic_metric} is to be understood in the following way: Each point $p\in M$ admits an open neighborhood $U\subset\oX$ and a constant $\varepsilon>0$ such that the map
\begin{equation}
 \label{eqn:geodesic_diffeomorphism}
 [0,\varepsilon)\times V \ni (t,q) \mapsto \gamma_q(t) \in U
\end{equation}
is a diffeomorphism with image $U$, where $V:=U\cap M$ and $\gamma_q$ is the integral curve in the direction $\nabla r$ originating at $q$.  By shrinking $U$ if necessary, we may assume that $\lv\nabla r\rv^2=1$ in $U$, and hence $r(\gamma_q(t))=t$; note that if $M$ is compact, then we may take $U$ to be a neighborhood of $M$.  The composition of the canonical projection $[0,\varepsilon)\times V\to V$ with the inverse of the diffeomorphism~\eqref{eqn:geodesic_diffeomorphism} gives a map $\pi\colon U\to V$.  We then consider covariant tensor fields on $V$ as covariant tensor fields in $U$ by pulling them back by $\pi$.  Finally, since $\lv\nabla r\rv^2=1$ in a neighorhood of $M$, it is straightforward to check that there is a one-parameter family $h_r$ of sections of $S^2T^\ast M$ such that $g=dr^2+h_r$ near $M$.  The assumption~\eqref{eqn:geodesic_metric} imposes the additional requirement that $h_r$ is even in $r$ to order $o(r^{2\gamma})$.  In particular, if $\gamma>1/2$, then $M$ is totally geodesic with respect to $g$; if also $\gamma>3/2$, then the scalar curvature $R$ of $g$ satisfies $\partial_rR=0$ along $M$.

Note that if $r$ is a geodesic defining function for a Poincar\'e--Einstein manifold $(X^{n+1},M^n,g_+)$ and if $m,\gamma$ are as in Definition~\ref{defn:geodesic}, then $(\oX,r^2g_+,r,m,1)$ is a geodesic smooth metric measure space.

\begin{defn}
 \label{defn:gamma_admissible}
 Let $\oX^{n+1}$ be a smooth manifold with boundary $M=\partial\oX$ and let $\gamma\in(0,n/2)\setminus\bN$.  Set $k=\lfloor\gamma\rfloor$ and $m=1+2k-2\gamma$.  A smooth metric measure space $(\oX^{n+1},g,\rho,m,1)$ is \emph{$\gamma$-admissible} if it is pointwise conformally equivalent to a geodesic smooth metric measure space $(\oX,g_0,r,m,1)$ such that
 \begin{equation}
  \label{eqn:gamma_admissible}
  \frac{\rho}{r} = \sum_{j=0}^k \rho_{(2j)}r^{2j} + \Phi r^{2\gamma} + o(r^{2\gamma})
 \end{equation}
 for $\rho_{(0)},\dotsc,\rho_{(2k)},\Phi\in C^\infty(M)$ and $\rho_{(0)}=1$.
\end{defn}

Note that if $(\oX,g,\rho,m,1)$ is a $\gamma$-admissible smooth metric measure space and there are two geodesic smooth metric measure spaces $(\oX,g_i,r_i,m,1)$, $i\in\{1,2\}$, as in Definition~\ref{defn:gamma_admissible}, then $r_2=r_1$ near $M$ (cf.\ \cite[Lemma~5.2]{GrahamLee1991} or~\cite[Lemma~5.1]{Lee1995}); in particular, all asymptotic statements about $\gamma$-admissible smooth metric measure spaces (e.g.\ \eqref{eqn:gamma_admissible}) are independent of the choice of geodesic smooth metric measure space in Definition~\ref{defn:gamma_admissible}.  Combining the expansions~\eqref{eqn:geodesic_metric} and~\eqref{eqn:gamma_admissible}, we see that if $(\oX,g,\rho,m,1)$ is a $\gamma$-admissible smooth metric measure space with $\gamma>1/2$, then $M$ is totally geodesic (with respect to $g$); if also $\gamma>3/2$, then $\partial_\rho R=0$ along $M$.

Given a Poincar\'e--Einstein manifold $(X^{n+1},M^n,g_+)$ and $\gamma\in(0,n/2)\setminus\bN$, a defining function $\rho$ is \emph{$\gamma$-admissible} if $(\oX,\rho^2g_+,\rho,m,1)$, $m=1-2\lfloor\gamma\rfloor-2\gamma$, is a $\gamma$-admissible smooth metric measure space.  In particular, the extension theorems established by Chang and the author~\cite[Theorem~4.1 and Theorem~4.4]{CaseChang2013} are all stated in terms of $\gamma$-admissible smooth metric measure spaces.  An important example of $\gamma$-admissible smooth metric measure spaces which arise as compactifications of Poincar\'e--Einstein manifolds and for which the function $\Phi$ in~\eqref{eqn:gamma_admissible} is not necessarily zero are obtained from the adapted defining function~\cite[Subsection~6.1]{CaseChang2013}.

In light of both our weakened regularity hypotheses and the asymptotics of solutions to the Poisson equation~\eqref{eqn:poisson}, it is natural to introduce the following function spaces.

\begin{defn}
 Fix $\gamma\in(0,1)$, set $m=1-2\gamma$, and let $(\oX^{n+1},g,\rho,m,1)$ be a $\gamma$-admissible smooth metric measure space.  Given $f\in C^\infty(M)$, denote by $\mC_f^\gamma$ the set of all $U\in C^\infty(X)\cap C^0(\oX)$ such that, asymptotically near $M$,
 \begin{equation}
  \label{eqn:mC0f_asympt}
  U = f + \psi\rho^{2\gamma} + o(\rho^{2\gamma})
 \end{equation}
 for some $\psi\in C^\infty(M)$.  Set
 \begin{equation}
  \label{eqn:mC0}
  \mC^\gamma := \bigcup_{f\in C^\infty(M)} \mC_f^\gamma .
 \end{equation}
 The Sobolev spaces $W_0^{1,2}(\oX,\rho^m\dvol)$ and $W^{1,2}(\oX,\rho^m\dvol)$ are the completions of $\mC_0^\gamma$ and $\mC^\gamma$, respectively, with respect to the norm
 \[ \left\lV U\right\rV_{W^{1,2}}^2 := \int_X \left( \lv\nabla U\rv^2 + U^2\right)\rho^m\dvol . \]
\end{defn}

For notational convenience, in the case $\gamma\in(0,1)$ we sometimes denote by $\mD^\gamma$ the space $\mC^\gamma$ and by $\mH^\gamma$ the space $W^{1,2}(\oX,\rho^m\dvol)$.

When $\gamma\in(0,1)$, the Sobolev trace theorem (e.g.\ \cite{Triebel1978}) states that there is a surjective bounded linear operator $\Tr\colon\ W^{1,2}(\oX,\rho^m\dvol)\to H^\gamma(M)$ such that $\Tr U = f$ for every $U\in\mC_f^\gamma$, where $H^\gamma(M)$ denotes the completion of $C^\infty(M)$ with respect to the norm obtained by pulling back
\[ \lV f\rV_{H^\gamma(\bR^n)}^2 := \int_{\bR^n} f^2dx + \int_{\bR^n}\int_{\bR^n} \frac{\lv f(x) - f(y)\rv^2}{\lv x-y\rv^{n+2\gamma}} dx\,dy \]
to $M$ via coordinate charts.

\begin{defn}
 Fix $\gamma\in(1,2)$, set $m=3-2\gamma$, and let $(\oX^{n+1},g,\rho,m,1)$ be a $\gamma$-admissible smooth metric measure space.  Given $f,\psi\in C^\infty(M)$, denote by $\mC_{f,\psi}^\gamma$ the set of all $U\in C^\infty(X)\cap C^0(\oX)$ such that, asymptotically near $M$,
 \begin{equation}
  \label{eqn:mC1f_asympt}
  U = f + \psi\rho^{2\gamma-2} + f_2\rho^2 + \psi_2\rho^{2\gamma} + o(\rho^{2\gamma})
 \end{equation}
 for some $f_2,\psi_2\in C^\infty(M)$.  Set
 \begin{align}
  \label{eqn:mC1} \mC^\gamma & := \bigcup_{f,\psi\in C^\infty(M)} \mC_{f,\psi}^\gamma , \\
  \label{eqn:mD1} \mD^\gamma & := \bigcup_{f\in C^\infty(M)} \mC_{f,0}^\gamma .
 \end{align}
 The Sobolev spaces $W_0^{2,2}(\oX,\rho^m\dvol)$, $W^{2,2}(\oX,\rho^m\dvol)$, and $\mH^\gamma$ are the completions of $\mC_{0,0}^\gamma$, $\mC^\gamma$, and $\mD^\gamma$, respectively, with respect to the norm
 \begin{equation}
  \label{eqn:w22}
  \left\lV U\right\rV_{W^{2,2}}^2 := \int_X \left( \left|\nabla^2U+m\rho^{-1}(\partial_\rho U)^2d\rho\otimes d\rho\right|^2 + \lv\nabla U\rv^2 + U^2\right) \rho^m\dvol .
 \end{equation}
\end{defn}

The particular modification of the Hessian used in~\eqref{eqn:w22} ensures that the integral is finite for all $U\in\mC^\gamma$.  Given $U\in\mC_{f,\psi}^\gamma$, the weighted Bochner formula (cf.\ Appendix~\ref{sec:appendix}) allows one to rewrite this Hessian term in terms of the $L^2$-norm of $\Delta_\phi U$, lower order interior terms depending on curvature, and boundary terms involving only $f$ and $\psi$. 

When $\gamma\in(1,2)$, the Sobolev trace theorem (see Appendix~\ref{sec:appendix}) states that there is a surjective bounded linear operator $\Tr\colon W^{2,2}(\oX,\rho^m\dvol)\to H^\gamma(M)\oplus H^{2-\gamma}(M)$ such that $\Tr(U)=(f,\psi)$ for every $U\in\mC_{f,\psi}^\gamma$, where $H^\gamma(M)$ denotes the completion of $C^\infty(M)$ with respect to the norm obtained by pulling back
\[ \lV f\rV_{H^\gamma(\bR^n)}^2 := \int_{\bR^n} \left( f^2 + \lv\nabla f\rv^2\right)dx + \sum_{j=1}^n\int_{\bR^n}\int_{\bR^n} \frac{\lv \partial_jf(x)-\partial_jf(y)\rv^2}{\lv x-y\rv^{n+2\gamma-2}} dx\,dy \]
to $M$ via coordinate charts.

We conclude with two useful observations.  The first is the following relationship between a defining function for a Poincar\'e--Einstein manifold and certain weighted geometric invariants of the induced compactification.

\begin{lem}[{\cite[Lemma~3.2]{CaseChang2013}}]
 \label{lem:pe_smms_formulae}
 Let $(X^{n+1},M^n,g_+)$ be a Poincar\'e--Einstein manifold and let $\rho$ be a defining function.  Fix $m>1-n$.  The smooth metric measure space $(X^{n+1},g:=\rho^2g_+,\rho,m,1)$ has
 \begin{align}
  \label{eqn:J_to_grad} J + \rho^{-1}\Delta\rho & = \frac{n+1}{2}\rho^{-2}\left(\lv\nabla\rho\rv^2 - 1 \right) \\
  \label{eqn:J_to_weight} J_\phi^m & = J - \frac{m}{n+1}\left(J + \rho^{-1}\Delta\rho\right) \\
  \label{eqn:P_to_weight} P_\phi^m & = P .
 \end{align}
\end{lem}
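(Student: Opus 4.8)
The plan is to compute the three identities directly from the definitions of the weighted invariants in Subsection~\ref{subsec:bg/smms}, using the Einstein condition $\Ric(g_+) = -n g_+$ together with the standard conformal transformation laws relating $g = \rho^2 g_+$ to $g_+$. First I would establish~\eqref{eqn:J_to_grad}. Writing $g = \rho^2 g_+$, i.e.\ $g = e^{2\sigma}g_+$ with $e^\sigma = \rho$, the conformal change formula for the Ricci tensor in dimension $n+1$ gives $\Ric(g) = \Ric(g_+) - (n-1)\left(\nabla^2\sigma - d\sigma\otimes d\sigma\right) - \left(\Delta_g\sigma + (n-1)\lv d\sigma\rv_g^2\right)g$, where all derivatives on the right are taken with respect to $g$. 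Substituting $\Ric(g_+) = -n g_+ = -n\rho^{-2}g$ and $\sigma = \log\rho$, so that $d\sigma = \rho^{-1}d\rho$ and $\nabla^2\sigma = \rho^{-1}\nabla^2\rho - \rho^{-2}d\rho\otimes d\rho$, and then tracing with respect to $g$, one obtains after simplification an expression for the scalar curvature $R = R_g$ in which the combination $J + \rho^{-1}\Delta\rho$ appears; here $J = \frac{R}{2n}$ is the Schouten scalar of $g$ in dimension $n+1$. Collecting terms, the $\nabla^2\rho$ contributions combine so that $J + \rho^{-1}\Delta\rho$ is proportional to $\rho^{-2}(\lv\nabla\rho\rv^2 - 1)$ with constant $\frac{n+1}{2}$; the key cancellation is that the $-n\rho^{-2}$ term from the Einstein condition and the $\rho^{-1}\Delta\rho$ terms conspire to leave only the $\lv\nabla\rho\rv^2 - 1$ factor, which reflects that $g_+$ is asymptotically hyperbolic and hence $\lv d\rho\rv_g^2 \to 1$ at $M$.

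Next, for~\eqref{eqn:J_to_weight} I would simply unwind the definition
\[ J_\phi^m = \frac{1}{2(m+n)}\left(R - 2m\rho^{-1}\Delta\rho - m(m-1)\rho^{-2}(\lv\nabla\rho\rv^2 - 1)\right). \]
Using $R = 2nJ$ and substituting the identity~\eqref{eqn:J_to_grad} in the form $\rho^{-2}(\lv\nabla\rho\rv^2-1) = \frac{2}{n+1}(J + \rho^{-1}\Delta\rho)$, every occurrence of $\rho^{-2}(\lv\nabla\rho\rv^2 - 1)$ is replaced, and the $\rho^{-1}\Delta\rho$ terms can be regrouped with the help of~\eqref{eqn:J_to_grad} again so that the whole expression collapses to a multiple of $J$ minus a multiple of $(J + \rho^{-1}\Delta\rho)$; checking the coefficients gives exactly $J - \frac{m}{n+1}(J + \rho^{-1}\Delta\rho)$. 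This step is purely algebraic once~\eqref{eqn:J_to_grad} is in hand.

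Finally, for~\eqref{eqn:P_to_weight} I would use the conformal transformation formula for the Schouten tensor, $\nabla^2\rho$ appearing through $\sigma = \log\rho$ exactly as above, together with the Einstein condition, to express $\Ric(g) = \Ric - m\rho^{-1}\nabla^2\rho$ type combinations. Concretely, from the Ricci formula one reads off $\Ric - (n-1)\left(\rho^{-1}\nabla^2\rho - \rho^{-2}d\rho\otimes d\rho\right) - (\dots)g = -n\rho^{-2}g$, and comparing this with the definition $P_\phi^m = \frac{1}{m+n-1}\left(\Ric - m\rho^{-1}\nabla^2\rho - J_\phi^m\right)$ and the definition $P = \frac{1}{n-1}(\Ric - J g)$ of the Schouten tensor of $g$ in dimension $n+1$, one finds that the $m\rho^{-1}\nabla^2\rho$ correction term in $P_\phi^m$ precisely restores the tracefree part that the conformal change removed, so that $P_\phi^m$ and $P$ agree as $(0,2)$-tensors. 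The main obstacle I anticipate is bookkeeping: keeping the conformal weights, the dimension shift between working in dimension $n+1$ versus the "$n$" appearing in the Einstein normalization, and the $m$-dependence all straight, so that the claimed coefficients $\frac{n+1}{2}$ and $\frac{m}{n+1}$ come out exactly. Since this is precisely \cite[Lemma~3.2]{CaseChang2013}, I would ultimately cite that computation rather than reproduce every line, but the sketch above indicates how to verify it independently.
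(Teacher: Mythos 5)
The paper does not prove this lemma; it simply cites \cite[Lemma~3.2]{CaseChang2013}. Your sketch is the natural (essentially the only) route: transform $\Ric$ under $g=\rho^2g_+$, impose $\Ric(g_+)=-ng_+$, trace to get~\eqref{eqn:J_to_grad}, then unwind the definitions of $J_\phi^m$ and $P_\phi^m$ and substitute. I verified that the computation closes as you describe: from the Einstein condition one gets $\Ric(g)+(n-1)\rho^{-1}\nabla^2\rho = -\rho^{-1}\Delta\rho\,g + n\rho^{-2}(\lv\nabla\rho\rv^2-1)g$, whose trace gives~\eqref{eqn:J_to_grad}; substituting the resulting identity $\rho^{-2}(\lv\nabla\rho\rv^2-1)=\tfrac{2}{n+1}(J+\rho^{-1}\Delta\rho)$ into the definition of $J_\phi^m$ yields~\eqref{eqn:J_to_weight}; and the same relation gives $\rho^{-1}\nabla^2\rho = -P + \tfrac{1}{n+1}(J+\rho^{-1}\Delta\rho)g$, which together with~\eqref{eqn:J_to_weight} collapses $P_\phi^m$ to $P$.

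One small caution: the Ricci conformal-change formula as you wrote it is not internally consistent about which metric the derivatives and the trailing $g$ refer to. If you write $g=e^{2\sigma}g_+$ and want derivatives with respect to $g_+$, the trailing factor should be $g_+$, not $g$; if instead you want derivatives with respect to $g$, you should rewrite it as a formula for $\Ric(g_+)$ in terms of $g$-covariant quantities (set $g_+=e^{-2\sigma}g$ and apply the transformation law with $-\sigma$). This is exactly the bookkeeping you flag as the main obstacle, and getting the $\tfrac{n+1}{2}$ and $\tfrac{m}{n+1}$ coefficients right depends on it. Also note the paper's displayed definition of $P_\phi^m$ has a typo: the subtracted term should be $J_\phi^m\,g$, not $J_\phi^m$; your proposal implicitly uses the correct form.
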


Here $P$ and $J$ are the Schouten tensor of $g$ and its trace, respectively.

The second is the following characterization of pointwise conformally equivalent $\gamma$-admissible smooth metric measure spaces in terms of the conformal factors.

\begin{lem}
 \label{lem:gamma_admissible_mC}
 Fix $\gamma\in(0,2)\setminus\{1\}$ and let $(\oX^{n+1},g,\rho,m,1)$ be a $\gamma$-admissible smooth metric measure space with $m=1+2\lfloor\gamma\rfloor-2\gamma$.  Let $\sigma\in C^\infty(X)\cap C^0(\oX)$ and set $\hg=e^{2\sigma}g$ and $\hrho=e^\sigma\rho$.  Then $(\oX^{n+1},\hg,\hrho,m,1)$ is a $\gamma$-admissible smooth metric measure space if and only if $\sigma\in\mD^\gamma$.
\end{lem}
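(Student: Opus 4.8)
The plan is to reduce to the case where $(\oX,g,\rho,m,1)$ is itself geodesic and there to read off $\sigma$ from the Hamilton--Jacobi equation characterizing geodesic defining functions. Write $k=\lfloor\gamma\rfloor$. In both relevant cases $\mD^\gamma$ is exactly the set of $U\in C^\infty(X)\cap C^0(\oX)$ admitting, near $M$, an expansion $U=\sum_{j=0}^k f_{(2j)}\rho^{2j}+\psi\rho^{2\gamma}+o(\rho^{2\gamma})$ with $f_{(2j)},\psi\in C^\infty(M)$; in particular $\mD^\gamma$ is a real vector space, and, using $2\gamma<2k+2$, manipulation of such expansions gives two elementary facts: (i) $e^U\in\mD^\gamma$ if and only if $U\in\mD^\gamma$; and (ii) $\mD^\gamma$ is unchanged if $\rho$ is replaced by any defining function $\rho'$ with $\rho'/\rho\in\mD^\gamma$ and $(\rho'/\rho)\rv_M>0$, since then $(\rho')^{2j}$ and $(\rho')^{2\gamma}$ differ from $\rho^{2j}$ and $\rho^{2\gamma}$ by factors in $\mD^\gamma$ and all resulting corrections lie in $o(\rho^{2\gamma})$. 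Combining (i) with~\eqref{eqn:gamma_admissible}, $(\oX,g,\rho,m,1)$ is $\gamma$-admissible if and only if it is pointwise conformally equivalent to a geodesic smooth metric measure space $(\oX,g_0,r,m,1)$ with $w:=\log(\rho/r)\in\mD^\gamma$ and $w\rv_M=0$, and then, by (ii), $\mD^\gamma$ is the same computed with $\rho$ or with $r$.

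This lets me reduce to the geodesic case: for $(\oX,g_0,r,m,1)$ and $w$ as above, $(\oX,e^{2\sigma}g,e^\sigma\rho,m,1)=(\oX,e^{2(\sigma+w)}g_0,e^{\sigma+w}r,m,1)$, and since $w\in\mD^\gamma$ and $\mD^\gamma$ is a vector space independent of the choice of admissible defining function, it suffices to show: for a geodesic $(\oX,g_0,r,m,1)$ and $\tau\in C^\infty(X)\cap C^0(\oX)$, the space $(\oX,e^{2\tau}g_0,e^\tau r,m,1)$ is $\gamma$-admissible if and only if $\tau\in\mD^\gamma$. For the forward direction, assume it is $\gamma$-admissible and pick a geodesic $(\oX,\tilde g_0,\tilde r,m,1)$ with $\tilde w:=\log(e^\tau r/\tilde r)\in\mD^\gamma$, $\tilde w\rv_M=0$, $e^{2\tau}g_0=e^{2\tilde w}\tilde g_0$ and $e^\tau r=e^{\tilde w}\tilde r$. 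Setting $\zeta:=\tau-\tilde w$ gives $\tilde g_0=e^{2\zeta}g_0$ and $\tilde r=e^\zeta r$, and expanding the geodesic identity $\lv\nabla\tilde r\rv_{\tilde g_0}^2=1$ near $M$ (using $g_0=dr^2+h_r$ with $\lv\nabla r\rv_{g_0}^2=1$) shows that $\zeta$ solves the Hamilton--Jacobi equation $2\,\partial_r\zeta+r\lv d\zeta\rv_{g_0}^2=0$ with $\zeta\rv_M=\tau\rv_M$. Because $h_r$ is even in $r$ to order $o(r^{2\gamma})$ and $2\gamma<2k+2$, the unique solution of this equation with a prescribed boundary value is even in $r$ to order $o(r^{2\gamma})$ (cf.\ \cite[Lemma~5.2]{GrahamLee1991}, \cite[Lemma~5.1]{Lee1995}), so $\zeta\in\mD^\gamma$; since $\tilde r/r=e^\zeta\in\mD^\gamma$ has positive boundary value, fact (ii) gives $\tilde w\in\mD^\gamma$, whence $\tau=\zeta+\tilde w\in\mD^\gamma$.

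For the converse, let $\tau\in\mD^\gamma$ and set $f:=\tau\rv_M$. Solving the Hamilton--Jacobi equation $2\,\partial_r\mu+r\lv d\mu\rv_{g_0}^2=0$ with boundary value $f$ produces (again by \cite[Lemma~5.2]{GrahamLee1991}, \cite[Lemma~5.1]{Lee1995}) a function $\mu\in C^\infty(X)\cap C^0(\oX)$, even in $r$ to order $o(r^{2\gamma})$ and hence in $\mD^\gamma$, with $\mu\rv_M=f$ and such that $(\oX,e^{2\mu}g_0,e^\mu r,m,1)$ is geodesic. Then $e^{2\tau}g_0=e^{2(\tau-\mu)}(e^{2\mu}g_0)$ and $e^\tau r=e^{\tau-\mu}(e^\mu r)$, with $\log(e^\tau r/(e^\mu r))=\tau-\mu\in\mD^\gamma$ vanishing on $M$; so by the criterion of the first paragraph, applied with the geodesic representative $(\oX,e^{2\mu}g_0,e^\mu r,m,1)$, the space $(\oX,e^{2\tau}g_0,e^\tau r,m,1)$ is $\gamma$-admissible. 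Together with the reduction, this proves the lemma.

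The analytic content sits entirely in the two appeals in the previous paragraphs to the existence, uniqueness, and parity of the geodesic defining function adapted to a prescribed conformal representative of the boundary, in the present setting where $g_0$ is only known to be even in $r$ to order $o(r^{2\gamma})$. I expect this to be the delicate point and would treat it by running the classical Hamilton--Jacobi construction of \cite{GrahamLee1991,Lee1995} directly from the normal form $g_0=dr^2+h_r$, keeping careful track of orders and using $2\gamma<2\lfloor\gamma\rfloor+2$ to verify that the solution and the quantities built from it have the stated regularity and even expansion; the remaining bookkeeping relating the defining functions $\rho$, $r$, $\tilde r$, and $e^\mu r$ is then routine given the first paragraph.
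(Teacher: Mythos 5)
Your proof is correct and follows essentially the same route as the paper's: the paper's two-line proof writes $e^\sigma = \frac{\hrho}{\hr}\cdot\frac{\hr}{r}\cdot\frac{r}{\rho}$ and declares the membership of $\hr/r$ in $\mD^\gamma$ (and the converse implication) as something one can ``readily check,'' which is exactly the Graham--Lee Hamilton--Jacobi parity argument you make explicit after reducing to the geodesic case. Your facts (i) and (ii) and the reduction are the unstated algebra behind the paper's product decomposition, so this is the same argument carried out in more detail rather than a genuinely different one; you have correctly located the sole analytic content in the even-to-order-$o(r^{2\gamma})$ regularity of solutions of $2\,\partial_r\zeta + r\lv d\zeta\rv^2_{g_0}=0$.
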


\begin{proof}
 Let $(\oX,g_0,r,m,1)$ and $(\oX,\hg_0,\hr,m,1)$ be geodesic smooth metric measure spaces associated to $(\oX,g,\rho,m,1)$ and $(\oX,\hg,\hr,m,1)$, respectively, as in Definition~\ref{defn:gamma_admissible}.  Suppose first that $(\oX,\hg,\hrho,m,1)$ is $\gamma$-admissible.  We readily check that
 \[ e^\sigma = \frac{\hrho}{\hr}\cdot\frac{\hr}{r}\cdot\frac{r}{\rho} \in \mD^\gamma, \]
 whence $\sigma\in\mD^\gamma$.  Conversely, if $\sigma\in\mD^\gamma$, we readily check that $\hr/r\in\mD^\gamma$, whence $(\oX^{n+1},\hg,\hrho,m,1)$ is $\gamma$-admissible.
\end{proof}

For the remainder of this article, unless otherwise specified, the measure with respect to which an integral is evaluated is specified by context: If a smooth metric measure space $(\oX^{n+1},g,\rho,m,1)$ with boundary $M=\partial\oX$ is given, all integrals over $X$ are evaluated with respect to $\rho^m\dvol_g$ and all integrals over $M$ are evaluated with respect to the Riemannian volume element of $g\rv_{TM}$.
\section{The conformally covariant boundary operators}
\label{sec:boundary}

In order to study boundary value problems associated to the weighted conformal Laplacian and the weighted Paneitz operator --- for instance, to study the fractional GJMS operators as in~\cite{CaseChang2013} --- it is useful to find conformally covariant boundary operators associated to these respective operators.  In the case of the weighted conformal Laplacian $L_{2,\phi}^m$ with $m=1-2\gamma$, this means finding conformally covariant operators $B_0^{2\gamma}$ and $B_{2\gamma}^{2\gamma}$ such that $(L_{2,\phi}^mU,V)=(L_{2,\phi}^mV,U)$ for all $U,V\in\ker B_0^{2\gamma}$ or for all $U,V\in\ker B_{2\gamma}^{2\gamma}$.  That is, the boundary value problems $(L_{2,\phi}^m;B_0^{2\gamma})$ and $(L_{2,\phi}^m;B_{2\gamma}^{2\gamma})$ are formally self-adjoint.  In the case of the weighted Paneitz operator $L_{4,\phi}^m$ with $m=3-2\gamma$, this means defining conformally covariant operators $B_0^{2\gamma}, B_{2\gamma-2}^{2\gamma}, B_2^{2\gamma}, B_{2\gamma}^{2\gamma}$ such that $(L_{4,\phi}^mU,V)=(L_{4,\phi}^mV,U)$ for all $U,V$ in the kernel of one of the pairs $\kB_1=(B_0^{2\gamma},B_2^{2\gamma})$, $\kB_2=(B_0^{2\gamma},B_{2\gamma-2}^{2\gamma})$, or $\kB_3=(B_{2\gamma-2}^{2\gamma},B_{2\gamma}^{2\gamma})$.  That is, the boundary value problems $(L_{4,\phi}^m;\kB_j)$ for $j\in\{1,2,3\}$ are all formally self-adjoint.  These boundary value problems are all elliptic, as is apparent from the definitions of the operators given below, and our definitions are such that the formal self-adjointness follows from simple integration-by-parts identities; see Theorem~\ref{thm:robin} for the case of the weighted conformal Laplacian and Theorem~\ref{thm:invariant} and Theorem~\ref{thm:integral} for the case of the weighted Paneitz operator.

The existence of such operators when $m=0$ is already known: $B=\eta+\frac{n-1}{2n}H$ is a boundary operator for the conformal Laplacian (cf.\ \cite{Branson1997,Escobar1990}), while Branson and Gover~\cite{BransonGover2001} have constructed via the tractor calculus conformally covariant boundary operators associated to the non-critical GJMS operators and Grant~\cite{Grant2003} derived the third-order boundary operator associated to the Paneitz operator (see also~\cite{ChangQing1997a,Juhl2009} for the case of critical dimension).  As is apparent from Definition~\ref{defn:operators0}, $B_1^1=B$, while the operators $B_k^3$ for $k\in\{0,1,2,3\}$ give explicit formulae for the boundary operators associated to the Paneitz operator in the case of manifolds with totally geodesic boundary; see~\cite{Case2015b} for the general case.

\subsection{The case $\gamma\in(0,1)$}
\label{subsec:boundary/1}

The conformally covariant boundary operators associated to the weighted conformal Laplacian are defined as follows.

\begin{defn}
 \label{defn:operators0}
 Fix $\gamma\in(0,1)$ and set $m=1-2\gamma$.  Let $(\oX^{n+1},g,\rho,m,1)$ be a $\gamma$-admissible smooth metric measure space with boundary $M=\partial\oX$ and let $(\oX,g_0,r,m,1)$ be the geodesic smooth metric measure space as in Definition~\ref{defn:gamma_admissible}.  Set $\eta=-\frac{\rho}{r}\nabla^gr$.  As operators mapping $\mC^\gamma$ to $C^\infty(M)$,
 \begin{align*}
  B_0^{2\gamma}U & := U\rv_M , \\
  B_{2\gamma}^{2\gamma}U & := \lim_{\rho\to0} \rho^m\left(\eta U + \frac{n-2\gamma}{2n}U\delta\eta\right) .
 \end{align*}
 where $\delta\eta := \tr_g \nabla^g\eta$.
\end{defn}

Note that $\eta$ is the outward-pointing unit normal (with respect to $g$) vector field along the level sets of $r$ in a neighorhood of $M$.  In particular, if $\gamma=1/2$, then $\delta\eta\rv_M=H$ is the mean curvature of $M$ with respect to $g$.  For this reason, we call
\[ H_{2\gamma} := \lim_{\rho\to0} \rho^m\delta\eta \]
the \emph{$\gamma$-mean curvature} of $M$.  Since $(\oX^{n+1},g_0,r,m,1)$ is uniquely determined near $M$ by $(\oX,g,\rho,m,1)$, the asymptotic assumptions of Definition~\ref{defn:operators0} guarantee that the $\gamma$-mean curvature and the operators $B_{0}^{2\gamma}$ and $B_{2\gamma}^{2\gamma}$ are well-defined; indeed,
\begin{align*}
 H_{2\gamma} & = -2n\gamma\Phi , \\
 B_0^{2\gamma}U & = f, \\
 B_{2\gamma}^{2\gamma}U & = -2\gamma\left(\psi + \frac{n-2\gamma}{2}\Phi f\right) ,
\end{align*}
where $\rho$ and $U$ satisfy~\eqref{eqn:gamma_admissible} and~\eqref{eqn:mC0f_asympt}, respectively, near $M$.

That the operators $B_{0}^{2\gamma}$ and $B_{2\gamma}^{2\gamma}$ are the conformally covariant boundary operators associated to the weighted conformal Laplacian is a consequence of the following result.

\begin{thm}
 \label{thm:robin}
 Fix $\gamma\in(0,1)$ and set $m=1-2\gamma$.  Let $(\oX^{n+1},g,\rho,m,1)$ and $(\oX^{n+1},\hg,\hrho,m,1)$ be two pointwise conformally equivalent $\gamma$-admissible smooth metric measure spaces with $\hg=e^{2\sigma}g$ and $\hrho=e^\sigma\rho$.  Then for any $U\in\mC^\gamma$ it holds that
 \begin{align}
  \label{eqn:B0_0_covariant} \hB_{0}^{2\gamma}(U) & = e^{-\frac{n-2\gamma}{2}\sigma\rv_M}B_{0}^{2\gamma}\left(e^{\frac{n-2\gamma}{2}\sigma} U\right), \\
  \label{eqn:B1_0_covariant} \hB_{2\gamma}^{2\gamma}\left(U\right) & = e^{-\frac{n+2\gamma}{2}\sigma\rv_M} B_{2\gamma}^{2\gamma}\left( e^{\frac{n-2\gamma}{2}\sigma}U\right) .
 \end{align}
 Moreover, given $U,V\in\mC^\gamma$, it holds that
 \begin{equation}
  \label{eqn:robin_ibp}
  \int_X V\,L_{2,\phi}^mU + \oint_M B_{0}^{2\gamma}(V)B_{2\gamma}^{2\gamma}(U) = \mQ_{2\gamma}(U,V)
 \end{equation}
 for $\mQ_{2\gamma}$ the symmetric bilinear form
 \[ \mQ_{2\gamma}(U,V) = \int_X \left(\lp\nabla U,\nabla V\rp + \frac{n-2\gamma}{2}J_\phi^mUV\right) + \frac{n-2\gamma}{2n}\oint_M H_{2\gamma}B_0^{2\gamma}(U)B_0^{2\gamma}(V) . \]
 In particular, $\mQ_{2\gamma}$ is conformally covariant.
\end{thm}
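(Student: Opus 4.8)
The plan is to prove the three claims in order: the conformal covariance of $B_0^{2\gamma}$, the conformal covariance of $B_{2\gamma}^{2\gamma}$, and then the integration-by-parts identity~\eqref{eqn:robin_ibp}, deducing the conformal covariance of $\mQ_{2\gamma}$ at the end. The covariance of $B_0^{2\gamma}$ is immediate: since $B_0^{2\gamma}U = U|_M$ and the conformal weight of $U$ in all the covariance statements is $\frac{n-2\gamma}{2}$, we have $\widehat{B_0^{2\gamma}}(U) = U|_M = e^{-\frac{n-2\gamma}{2}\sigma|_M}\left(e^{\frac{n-2\gamma}{2}\sigma}U\right)\big|_M$, which is exactly~\eqref{eqn:B0_0_covariant}. (Here one uses Lemma~\ref{lem:gamma_admissible_mC} to know that $e^{\frac{n-2\gamma}{2}\sigma}U$ again lies in $\mC^\gamma$, so the left side makes sense.)

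For the covariance of $B_{2\gamma}^{2\gamma}$, I would work directly with the asymptotic formula $B_{2\gamma}^{2\gamma}U = -2\gamma\left(\psi + \frac{n-2\gamma}{2}\Phi f\right)$ recorded just before the theorem, where $f,\psi$ are the expansion coefficients of $U$ in~\eqref{eqn:mC0f_asympt} and $\Phi$ is the coefficient in~\eqref{eqn:gamma_admissible}. The point is to track how $f$, $\psi$, and $\Phi$ transform under $(g,\rho)\mapsto(\hg,\hrho)=(e^{2\sigma}g,e^\sigma\rho)$. Writing $\sigma = \sigma_0 + \sigma_{2\gamma}\rho^{2\gamma} + o(\rho^{2\gamma})$ (legitimate since $\sigma\in\mD^\gamma$ by Lemma~\ref{lem:gamma_admissible_mC}) and similarly expanding $\hrho/\hr$ in terms of $\rho/r$ and $e^\sigma$, one computes $\hat\Phi$ in terms of $\Phi$, $\sigma_0$, $\sigma_{2\gamma}$; then for $\hat U := e^{\frac{n-2\gamma}{2}\sigma}U$ one reads off $\hat f = e^{\frac{n-2\gamma}{2}\sigma_0}f$ and $\hat\psi$ in terms of $\psi$, $f$, $\sigma_0$, $\sigma_{2\gamma}$. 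The claim~\eqref{eqn:B1_0_covariant} amounts to the algebraic identity $\hat\psi + \frac{n-2\gamma}{2}\hat\Phi\hat f = e^{\frac{n+2\gamma}{2}\sigma_0}\left(\psi + \frac{n-2\gamma}{2}\Phi f\right)$; I expect the $\sigma_{2\gamma}$ contributions to $\hat\psi$ and to $\hat\Phi$ to cancel precisely, leaving only the weight factor. This bookkeeping is the main obstacle — it is not deep, but one must handle the composition of the geodesic-normalization change (the passage $r\mapsto\hat r$, governed by the uniqueness in Definition~\ref{defn:gamma_admissible}) simultaneously with the conformal rescaling, and keep the $\rho^{2\gamma}$-order terms straight.

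For~\eqref{eqn:robin_ibp}, I would integrate by parts on $X$ against the measure $\rho^m\dvol_g$. Starting from $\int_X V\,L_{2,\phi}^mU = \int_X V\left(-\Delta_\phi U + \frac{n-2\gamma}{2}J_\phi^m U\right)\rho^m\dvol_g$ and using that $\Delta_\phi$ is formally self-adjoint with respect to $\rho^m\dvol_g$, the divergence theorem produces $\int_X\langle\nabla U,\nabla V\rangle\rho^m\dvol_g + \frac{n-2\gamma}{2}\int_X J_\phi^m UV\,\rho^m\dvol_g$ plus a boundary term $-\lim_{\rho\to0}\oint_M V\,\rho^m(\eta U)\,\dvol$ (with the sign dictated by $\eta$ being the outward normal). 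The finiteness of all these integrals for $U,V\in\mC^\gamma$ is exactly what the choice of function spaces guarantees, and near $M$ one must verify the boundary term converges — this uses the expansion~\eqref{eqn:mC0f_asympt} and the fact $m=1-2\gamma<1$ together with $2\gamma>0$. It then remains to massage $-\lim\rho^m V(\eta U)$ into $B_0^{2\gamma}(V)B_{2\gamma}^{2\gamma}(U) + \frac{n-2\gamma}{2n}H_{2\gamma}B_0^{2\gamma}(U)B_0^{2\gamma}(V)$: this is just the definition $B_{2\gamma}^{2\gamma}U = \lim\rho^m(\eta U + \frac{n-2\gamma}{2n}U\delta\eta)$ rearranged, using $H_{2\gamma}=\lim\rho^m\delta\eta$ and $B_0^{2\gamma}V = V|_M$. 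The resulting expression for $\int_X V\,L_{2,\phi}^m U + \oint_M B_0^{2\gamma}(V)B_{2\gamma}^{2\gamma}(U)$ is manifestly the symmetric form $\mQ_{2\gamma}(U,V)$; symmetry is visible from the formula. Finally, conformal covariance of $\mQ_{2\gamma}$ follows formally: by~\eqref{eqn:conformally_covariant2} one has $\widehat{L_{2,\phi}^m}(U)=e^{-\frac{m+n+3}{2}\sigma}L_{2,\phi}^m(e^{\frac{m+n-1}{2}\sigma}U)$, so with $m=1-2\gamma$ the interior integrand $V\,\widehat{L_{2,\phi}^m}(U)\,\hrho^m\dvol_{\hg}$ against $\tilde V = e^{\frac{n-2\gamma}{2}\sigma}V$ equals $\tilde V\,L_{2,\phi}^m(\tilde U)\,\rho^m\dvol_g$ with $\tilde U = e^{\frac{n-2\gamma}{2}\sigma}U$ (the dimensional exponents match since $\dvol_{\hg}=e^{(n+1)\sigma}\dvol_g$ and $\hrho^m = e^{m\sigma}\rho^m$), and the two boundary terms transform by~\eqref{eqn:B0_0_covariant}--\eqref{eqn:B1_0_covariant}; hence $\widehat{\mQ}_{2\gamma}(U,V) = \mQ_{2\gamma}(\tilde U,\tilde V)$.
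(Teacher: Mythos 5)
Your outline for~\eqref{eqn:B0_0_covariant}, for the integration-by-parts identity~\eqref{eqn:robin_ibp}, and for the consequent conformal invariance of $\mQ_{2\gamma}$ all match the paper's proof. The significant divergence is in your treatment of~\eqref{eqn:B1_0_covariant}, and there you have left the actual crux of the argument undone.

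You propose to verify~\eqref{eqn:B1_0_covariant} by expanding everything in the asymptotic coefficients $f$, $\psi$, $\Phi$ and checking by hand that the $\sigma_{2\gamma}$-contributions to $\hat\psi$ and $\hat\Phi$ cancel. You explicitly write ``I expect the $\sigma_{2\gamma}$ contributions \dots to cancel precisely'' without carrying out the computation, and you flag as ``the main obstacle'' the bookkeeping of how the geodesic defining function $r$ and hence $\Phi$ change under $\hg = e^{2\sigma}g$. That obstacle is the whole content of~\eqref{eqn:B1_0_covariant}, so the proposal has a genuine gap here: tracking the transformation of $\Phi$ requires comparing the two geodesic defining functions $r$ and $\hr$ attached to $g$ and $\hg$, which is not a routine matter of expanding $e^\sigma$. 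The paper sidesteps this entirely by never introducing $\hat\Phi$. Instead it uses the intrinsic formula
\[
B_{2\gamma}^{2\gamma}U=\lim_{\rho\to0}\rho^m\left(\eta U+\tfrac{n-2\gamma}{2n}U\,\delta\eta\right),
\]
observes $\heta=e^{-\sigma}\eta$, and transforms the two pieces separately:
\[
\hrho^{\,m}\heta\!\left(e^{-\frac{n-2\gamma}{2}\sigma}U\right)=e^{-\frac{n+2\gamma}{2}\sigma}\rho^m\!\left(\eta U-\tfrac{n-2\gamma}{2}U\,\eta\sigma\right),
\qquad
\hrho^{\,m}\hdelta\heta=e^{-2\gamma\sigma}\rho^m\!\left(\delta\eta+n\,\eta\sigma\right).
\]
Adding the first to $\frac{n-2\gamma}{2n}$ times $e^{-\frac{n-2\gamma}{2}\sigma}U$ times the second, the $\eta\sigma$-terms cancel exactly and the common prefactor $e^{-\frac{n+2\gamma}{2}\sigma}$ appears. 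Lemma~\ref{lem:gamma_admissible_mC} guarantees $\sigma\in\mC^\gamma$ so that $\rho^m\eta\sigma$ has a finite limit; this is where that lemma is used in the paper, and it is what makes both displayed identities meaningful as $\rho\to0$. I suggest you replace your coefficient-chasing plan by this two-piece transformation; it is shorter, avoids $\hat\Phi$ altogether, and makes the cancellation manifest rather than something one must verify after the fact. The remaining parts of your write-up (the integration by parts, with the boundary term rearranged via $B_{2\gamma}^{2\gamma}U=\lim\rho^m(\eta U+\frac{n-2\gamma}{2n}U\delta\eta)$ and $H_{2\gamma}=\lim\rho^m\delta\eta$, and the deduction of the invariance of $\mQ_{2\gamma}$ from~\eqref{eqn:B0_0_covariant}--\eqref{eqn:robin_ibp}) are correct and are exactly what the paper does.
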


\begin{proof}
 \eqref{eqn:B0_0_covariant} follows immediately from the definition of $B_{0}^{2\gamma}$.

 By Lemma~\ref{lem:gamma_admissible_mC}, we have that $\sigma\in\mC^\gamma$, and in particular $\rho^m\eta\sigma$ is well-defined.  On the other hand, if $\eta$ and $\heta$ are as in Definition~\ref{defn:operators0}, then $\heta=e^{-\sigma}\eta$.  Hence
 \begin{align*}
  \hrho\,{}^m\heta\left(e^{-\frac{n-2\gamma}{2}\sigma}U\right) & = e^{-\frac{n+2\gamma}{2}\sigma}\rho^m\left(\eta U - \frac{n-2\gamma}{2}U\eta\sigma\right), \\
  \hrho\,{}^m\hdelta\heta & = e^{-2\gamma\sigma}\rho^m\left(\delta\eta + n\eta\sigma\right) .
 \end{align*}
 Combining these two equations yields~\eqref{eqn:B1_0_covariant}.

 Finally, integration by parts yields~\eqref{eqn:robin_ibp}.  Combining~\eqref{eqn:B0_0_covariant} and~\eqref{eqn:B1_0_covariant} with~\eqref{eqn:robin_ibp} yields the conformal covariance of $\mQ_{2\gamma}$.
\end{proof}

\subsection{The case $\gamma\in(1,2)$}
\label{subsec:boundary/2}

The conformally covariant boundary operators associated to the weighted Paneitz operator are defined as follows.

\begin{defn}
 \label{defn:operators}
 Fix $\gamma\in(1,2)$ and set $m=3-2\gamma$.  Let $(\oX^{n+1},g,\rho,m,1)$ be a $\gamma$-admissible smooth metric measure space with boundary $M=\partial\oX$ and let $\eta$ be as in Definition~\ref{defn:operators0}.  As operators mapping $\mC^\gamma$ to $C^\infty(M)$,
 \begin{align*}
  B_{0}^{2\gamma}U & := U , \\
  B_{2\gamma-2}^{2\gamma}U & := \rho^m\eta U , \\
  B_{2}^{2\gamma}U & := -\frac{2-\gamma}{\gamma-1}\oDelta U + \left(\nabla^2U(\eta,\eta) + m\rho^{-1}\partial_\rho U\right) + \frac{n-2\gamma}{2}T_{2}^{2\gamma}U , \\
  B_{2\gamma}^{2\gamma}U & := -\rho^m\eta\Delta_\phi U - \frac{1}{\gamma-1}\oDelta\rho^m\eta U + S_2^{2\gamma}\rho^m\eta U + \frac{n-2\gamma}{2}\left(\rho^m\eta J_\phi^m\right)U ,
 \end{align*}
 where
 \begin{align}
  \label{eqn:T} T_{2}^{2\gamma} & := \frac{2-\gamma}{\gamma-1}\oJ - \left(P(\eta,\eta) - \frac{3-2\gamma}{n+1}\left(\oJ + \rho^{-1}\Delta\rho + P(\eta,\eta)\right)\right), \\
  \label{eqn:S} S_2^{2\gamma} & := \left(\frac{n-2\gamma}{2} + \frac{n+2\gamma-4}{2(\gamma-1)}\right)\oJ + \frac{n-2\gamma-4}{2}P(\eta,\eta) \\
  \notag & \quad - \frac{(3-2\gamma)(n-2\gamma+4)}{2(n+1)}\left(\oJ + \rho^{-1}\Delta\rho + P(\eta,\eta)\right)
 \end{align}
 and we understand the right-hand sides to all be evaluated in the limit $\rho\to0$.
\end{defn}

Due to the length of the computations, we break the proof that the operators given in Definition~\ref{defn:operators} are conformally covariant boundary operators associated to the weighted Paneitz operator on $\gamma$-admissible smooth metric measure spaces into two parts.  First, we show that they are conformally covariant of the correct weight.

\begin{thm}
\label{thm:invariant}
 Fix $\gamma\in(1,2)$ and set $m=3-2\gamma$.  Let $(\oX^{n+1},g,\rho,m,1)$ and $(\oX^{n+1},\hg,\hrho,m,1)$ be two pointwise conformally equivalent $\gamma$-admissible smooth metric measure spaces with $\hg=e^{2\sigma}g$ and $\hrho=e^\sigma\rho$.  Then for any $U\in\mC^\gamma$ it holds that
 \begin{align}
  \label{eqn:invariant2/0} \hB_{0}^{2\gamma}U & = e^{-\frac{n-2\gamma}{2}\sigma\rv_M}B_{0}^{2\gamma}\left(e^{\frac{n-2\gamma}{2}\sigma}U\right), \\
  \label{eqn:invariant2/1} \hB_{2\gamma-2}^{2\gamma} & = e^{-\frac{n+2\gamma-4}{2}\sigma\rv_M}B_{2\gamma-2}^{2\gamma}\left(e^{\frac{n-2\gamma}{2}\sigma}U\right), \\
  \label{eqn:invariant2/2} \hB_{2}^{2\gamma} & = e^{-\frac{n-2\gamma+4}{2}\sigma\rv_M}B_{2}^{2\gamma}\left(e^{\frac{n-2\gamma}{2}\sigma}U\right), \\
  \label{eqn:invariant2/3} \hB_{2\gamma}^{2\gamma} & = e^{-\frac{n+2\gamma}{2}\sigma\rv_M}B_{2\gamma}^{2\gamma}\left(e^{\frac{n-2\gamma}{2}\sigma}U\right) .
 \end{align}
\end{thm}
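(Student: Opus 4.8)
The plan is to verify the four transformation laws \eqref{eqn:invariant2/0}--\eqref{eqn:invariant2/3} by direct computation, after first assembling the conformal transformation rules of the quantities appearing in Definition~\ref{defn:operators} and then checking, in each case, that the inhomogeneous ($\sigma$-derivative) terms cancel and leave only a conformal weight factor. The rules I would use are: $\hrho^m = e^{m\sigma}\rho^m$, $\hrho^{-1} = e^{-\sigma}\rho^{-1}$, and $\heta = e^{-\sigma}\eta$ near $M$ (which is enough, since every operator is evaluated in the limit $\rho\to0$); the transformations $\widehat{\Delta_\phi}U = e^{-2\sigma}\bigl(\Delta_\phi U + (m+n-1)\lp\nabla\sigma,\nabla U\rp\bigr)$ and $\hat J_\phi^m = e^{-2\sigma}\bigl(J_\phi^m - \Delta_\phi\sigma - \tfrac{m+n-1}{2}\lv\nabla\sigma\rv^2\bigr)$, both extracted from the conformal covariance \eqref{eqn:conformally_covariant2} of $L_{2,\phi}^m$; the Schouten transformation $\hat P = P - \nabla^2\sigma + d\sigma\otimes d\sigma - \tfrac{1}{2}\lv\nabla\sigma\rv^2 g$, which, since $\heta$ is $\hg$-unit, yields $\hat P(\heta,\heta) = e^{-2\sigma}\bigl(P(\eta,\eta) - \nabla^2\sigma(\eta,\eta) + (\eta\sigma)^2 - \tfrac{1}{2}\lv\nabla\sigma\rv^2\bigr)$; the conformal change of the Laplacian of $g$, which controls $\rho^{-1}\Delta\rho$; and the standard $n$-dimensional conformal change formulas for the boundary Laplacian $\oDelta$ and boundary Schouten scalar $\oJ$ of $h = g\rv_{TM}$, bearing in mind that $\hat h = e^{2\sigma\rv_M}h$. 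Throughout I would use that $\sigma\in\mD^\gamma$ by Lemma~\ref{lem:gamma_admissible_mC}, so that $\sigma$ has an expansion of the form \eqref{eqn:mC1f_asympt} with vanishing $\rho^{2\gamma-2}$-coefficient; in particular $\rho^m\eta\sigma = O(\rho^{4-2\gamma})\to0$. Identity \eqref{eqn:invariant2/0} is immediate from the definition of $B_0^{2\gamma}$.

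For \eqref{eqn:invariant2/1}, set $w = \tfrac{n-2\gamma}{2}$. From $\hrho^m = e^{m\sigma}\rho^m$ and $\heta = e^{-\sigma}\eta$ one has $\hrho^m\heta U = e^{(m-1)\sigma}\rho^m\eta U$, so $\hB_{2\gamma-2}^{2\gamma}U = e^{(m-1)\sigma\rv_M}\lim_{\rho\to0}\rho^m\eta U$; on the other hand the product rule together with $\rho^m\eta\sigma\to0$ gives $B_{2\gamma-2}^{2\gamma}\bigl(e^{w\sigma}U\bigr) = e^{w\sigma\rv_M}\lim_{\rho\to0}\rho^m\eta U$. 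Comparing these and using $(m-1)-w = -\tfrac{n+2\gamma-4}{2}$ gives \eqref{eqn:invariant2/1}.

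For \eqref{eqn:invariant2/2} I would transform the three summands of $B_2^{2\gamma}$ in turn. The conformal change of $\oDelta$ applied to $B_0^{2\gamma}(e^{w\sigma}U) = e^{w\sigma\rv_M}f$ produces the familiar $(n-2)\lp\onabla\sigma,\onabla(e^{w\sigma\rv_M}f)\rp$ correction. The combination $\nabla^2U(\eta,\eta) + m\rho^{-1}\partial_\rho U$ --- which is designed precisely so that the potentially singular $\rho^{2\gamma-4}$-contribution of the $\psi\rho^{2\gamma-2}$-term of $U\in\mC^\gamma$ cancels between its two pieces --- acquires, through the Christoffel symbols of $\hg$ and the relations $\heta = e^{-\sigma}\eta$, $\hrho^{-1} = e^{-\sigma}\rho^{-1}$, correction terms in $\nabla^2\sigma(\eta,\eta)$, $(\eta\sigma)^2$, $\rho^{-1}\lp\nabla\rho,\nabla\sigma\rp$ and $\lp\onabla\sigma,\onabla f\rp$. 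Finally $\hat T_2^{2\gamma}$ is obtained from the transformations of $\oJ$, $P(\eta,\eta)$ and $\rho^{-1}\Delta\rho$ recorded above. Adding the three contributions, I expect the $\oDelta\sigma\rv_M$-, $\lv\onabla\sigma\rv^2$-, $\lp\onabla\sigma,\onabla f\rp$-, $\nabla^2\sigma(\eta,\eta)$- and $(\eta\sigma)^2$-terms to cancel in pairs; this cancellation is exactly what forces the coefficient $-\tfrac{2-\gamma}{\gamma-1}$ and the precise form \eqref{eqn:T} of $T_2^{2\gamma}$, and what is left is $e^{-\frac{n-2\gamma+4}{2}\sigma\rv_M}B_2^{2\gamma}(e^{w\sigma}U)$.

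The identity \eqref{eqn:invariant2/3} is where I expect the real work to be: it is a third-order computation with many cross terms. I would transform $\rho^m\eta\Delta_\phi U$ using the formula for $\widehat{\Delta_\phi}$ together with $\hrho^m\heta = e^{(m-1)\sigma}\rho^m\eta$; transform $\tfrac{1}{\gamma-1}\oDelta(\rho^m\eta U)$ using the conformal change of $\oDelta$ and the already-proven \eqref{eqn:invariant2/1}; transform $S_2^{2\gamma}\rho^m\eta U$ from the transformations of $\oJ$, $P(\eta,\eta)$ and $\rho^{-1}\Delta\rho$; and transform the genuinely new term $(\rho^m\eta J_\phi^m)U$ by applying $\hrho^m\heta = e^{(m-1)\sigma}\rho^m\eta$ to the transformation of $J_\phi^m$, which contributes $\rho^m\eta(\Delta_\phi\sigma)$- and $\rho^m\eta(\lv\nabla\sigma\rv^2)$-type terms. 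At several points one has to use that a $\gamma$-admissible smooth metric measure space with $\gamma\in(1,2)$ has totally geodesic boundary, and that $\partial_\rho R = 0$ along $M$ once $\gamma>3/2$ (cf.\ Definition~\ref{defn:geodesic} and Lemma~\ref{lem:pe_smms_formulae}), so that the limit defining $B_{2\gamma}^{2\gamma}$ exists and the relevant Taylor expansions near $M$ have the parity needed for the $\sigma$-derivative terms to organize into complete cancellations; as before, the coefficient $-\tfrac{1}{\gamma-1}$ and the expression \eqref{eqn:S} for $S_2^{2\gamma}$ should be exactly what these cancellations demand, producing $e^{-\frac{n+2\gamma}{2}\sigma\rv_M}B_{2\gamma}^{2\gamma}(e^{w\sigma}U)$. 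The main obstacle is the sheer bookkeeping in this last step; a conceptual shortcut via a factorization of $L_{4,\phi}^m$ into second-order pieces (reducing to Theorem~\ref{thm:robin}) is available only on special backgrounds such as the hemisphere, so in general the direct verification appears unavoidable.
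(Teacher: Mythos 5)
Your overall plan --- transform each summand of $B_2^{2\gamma}$ and $B_{2\gamma}^{2\gamma}$ under the full finite conformal change $\hg = e^{2\sigma}g$, $\hrho = e^\sigma\rho$, and verify that the $\sigma$-dependent terms cancel --- is sound in principle, and the preliminary formulas you assemble for $\heta$, $\widehat{\Delta_\phi}$, $\hat{J}_\phi^m$, and $\hP$ are correct, as are your reductions of \eqref{eqn:invariant2/0} and \eqref{eqn:invariant2/1}. But the proposal has a genuine gap: for \eqref{eqn:invariant2/2} and \eqref{eqn:invariant2/3} you enumerate the correction terms and then merely assert that you ``expect'' the $\oDelta\sigma$-, $\lv\onabla\sigma\rv^2$-, $\nabla^2\sigma(\eta,\eta)$- and $(\eta\sigma)^2$-terms to cancel and that the coefficients in $T_2^{2\gamma}$ and $S_2^{2\gamma}$ ``should be'' exactly what forces this. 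That cancellation \emph{is} the theorem; a proof must actually display it, and nothing in your write-up does. Your appeal to $\partial_\rho R = 0$ ``once $\gamma>3/2$'' is also misplaced: the theorem must hold for all $\gamma\in(1,2)$, and the paper never invokes this condition in its proof.

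The ``conceptual shortcut'' you suspect exists is not a factorization of $L_{4,\phi}^m$ but the standard linearization reduction for natural homogeneous operators, set up around \eqref{eqn:derivative}: for a natural operator $T$ homogeneous of degree $k$ and a fixed weight $w$, the finite covariance identity holds for all conformal factors in $\mD^\gamma$ if and only if the first variation $(T(U))'$ vanishes for all $\sigma\in\mD^\gamma$. The paper exploits this systematically via Lemmas~\ref{lem:2} and~\ref{lem:3}: one need only compute the \emph{linearizations} of the building blocks $\oDelta U$, $\nabla^2U(\eta,\eta)+m\rho^{-1}\partial_\rho U$, $\oJ$, $P(\eta,\eta)$, $\rho^{-1}\Delta\rho$, $\oDelta\rho^m\eta U$, $\rho^m\eta\Delta_\phi U$, and $\rho^m\eta J_\phi^m$, so the quadratic-in-$\sigma$ terms your finite approach would have to track simply never arise. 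This halves the bookkeeping and makes the structure transparent: the proof builds general-weight families $B_{2,w}$ and $B_{2\gamma,w}$ with vanishing linearization for every $w$, which fixes the coefficients you wish to discover, and then observes that $B_2^{2\gamma}$ and $B_{2\gamma}^{2\gamma}$ are (up to scaling) the specializations at $w = -\tfrac{n-2\gamma}{2}$. If you insist on the finite route you would need to carry out the full computations explicitly; the linearization route is considerably more economical and is what the paper does.
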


The proof of Theorem~\ref{thm:invariant} is a somewhat lengthy computation.  While such computations are routine in conformal geometry (cf.\ \cite{Branson1985,ChangQing1997a}), they have not been carried out in this form in the literature for smooth metric measure spaces, and so we sketch the details here.

Fix $\gamma\in(1,2)$.  An operator $T\colon\mC^\gamma\to C^\infty(M)$ defined on a $\gamma$-admissible smooth metric measure space $(\oX^{n+1},g,\rho,m,1)$ with boundary $M=\partial\oX$ is \emph{natural} if it can be expressed as a polynomial involving the Levi-Civita connection and the Riemann curvature tensor of $g$, powers of $\rho$, the outward-pointing normal $\eta$ along $M=\partial\oX$, and contractions thereof.  A natural operator $T$ is said to be \emph{homogeneous of degree $k\in\bR$} if for any positive constant $c\in\bR$, the operators $T$ and $\hT$ defined on $(\oX^{n+1},g,\rho,m,1)$ and $(\oX^{n+1},\hg,\hrho,m,1)$, respectively, for $\hg=c^2g$ and $\hrho=c\rho$, are related by
\[ \hT(U) = c^{k} T(U) \]
for all $U$ in the domain $\Dom(T)$ of $T$.  Given a homogeneous operator $T$ of degree $k$, a function $\sigma\in\mD^\gamma$, and a fixed weight $w\in\bR$, we denote
\begin{equation}
\label{eqn:derivative}
\left(T(U)\right)^\prime := \left.\frac{\partial}{\partial t}\right|_{t=0}\left( e^{-(w+k)t\sigma\rv_M}T_{e^{2t\sigma}g}\left(e^{wt\sigma}U\right)\right) ,
\end{equation}
where $T_{e^{2\sigma}g}$ denotes the operator $T$ as defined with respect to the smooth metric measure space $(\oX^{n+1},e^{2\sigma}g,e^\sigma\rho,m,1)$.
One readily shows (cf.\ \cite[Corollary~1.14]{Branson1985}) that, given a natural operator $T$ which is homogeneous of degree $k$ and a fixed weight $w$, it holds that
\[ T_{e^{2\sigma}g}(U) = e^{(w+k)\sigma\rv_M}T\left(e^{-w\sigma}U\right) \]
for all $\sigma\in\mD^\gamma$ and all $U\in\Dom(T)$ if and only if $\left(T(U)\right)^\prime=0$ for all $\sigma\in\mD^\gamma$ and all $U\in\Dom(T)$.

To prove Theorem~\ref{thm:invariant}, it thus suffices to compute the linearizations~\eqref{eqn:derivative} of the operators given in Definition~\ref{defn:operators} --- which are all natural and homogeneous --- with the fixed weight $w=-\frac{n-2\gamma}{2}$.  We accomplish this through a pair of lemmas.  We first consider operators which are homogeneous of degree $-2$.

\begin{lem}
 \label{lem:2}
 Fix $\gamma\in(1,2)$ and set $m=3-2\gamma$.  Let $(\oX^{n+1},g,\rho,m,1)$ be a $\gamma$-admissible smooth metric measure space with boundary $M=\partial\oX$.  Let $\sigma\in\mD^\gamma$ and let $U\in\mC^\gamma$.  Fix a weight $w\in\bR$.  Then
 \begin{align*}
  \left(\oDelta U\right)^\prime & = (n+2w-2)\lp\onabla U,\onabla\sigma\rp + wU\oDelta\sigma, \\
  \left(\nabla^2U(\eta,\eta)+m\rho^{-1}\partial_\rho U\right)^\prime & = (m+1)\lp\onabla U,\onabla\sigma\rp + wU\left(\nabla^2\sigma(\eta,\eta)+m\rho^{-1}\partial_\rho\sigma\right), \\
  \left(\oJ U\right)^\prime & = -U\oDelta\sigma , \\
  \left(UP(\eta,\eta)\right)^\prime & = -U\nabla^2\sigma(\eta,\eta), \\
  \left(\rho^{-1}U\Delta\rho\right)^\prime & = \left(\oDelta\sigma + \nabla^2\sigma(\eta,\eta) + (n+1)\rho^{-1}\partial_\rho\sigma\right)U .
 \end{align*}
\end{lem}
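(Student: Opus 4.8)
The plan is to compute each of the five conformal linearizations directly from first principles, exploiting the fact that all five operators are natural and homogeneous, so that the derivative $(\cdot)'$ in \eqref{eqn:derivative} reduces to differentiating the standard conformal transformation laws for the Laplacian, Hessian, Schouten scalar, and the warping function $\rho$ at $t=0$. The key technical input is that $\sigma\in\mD^\gamma$ is smooth enough near $M$ that all the limiting quantities below are well-defined (by Lemma~\ref{lem:gamma_admissible_mC}, $\sigma\in\mD^\gamma$ is exactly the condition preserving $\gamma$-admissibility), and that $\onabla$, $\oDelta$, $\oJ$, $\oP$ refer to the boundary metric $h=g\rv_{TM}$, which transforms conformally under $\hat h=e^{2\sigma\rv_M}h$ in the usual way in dimension $n$.

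\textbf{Step 1: the boundary Laplacian term.} For the first identity I would start from the classical formula $\widehat{\oDelta}\,v = e^{-2\sigma}\bigl(\oDelta v + (n-2)\lp\onabla\sigma,\onabla v\rp\bigr)$ on the $n$-dimensional boundary, substitute $v=e^{wt\sigma\rv_M}U\rv_M$ and $\sigma\to t\sigma$, multiply by the prefactor $e^{-(w-2)t\sigma\rv_M}$ (here $k=-2$), and differentiate at $t=0$. The terms linear in $t$ that survive are $(n-2)\lp\onabla\sigma,\onabla U\rp$ from the conformal correction, $2w\lp\onabla\sigma,\onabla U\rp$ from differentiating $e^{wt\sigma}U$ inside the gradient pairing (the $-2t$ and $+2wt$ from prefactor and weight cancel the leading $U\oDelta(t\sigma)$-type contributions appropriately), and $wU\oDelta\sigma$ from the Laplacian hitting the weight factor; collecting gives $(n+2w-2)\lp\onabla U,\onabla\sigma\rp + wU\oDelta\sigma$.

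\textbf{Step 2: the normal-Hessian combination.} The quantity $\nabla^2 U(\eta,\eta) + m\rho^{-1}\partial_\rho U$ is, via the warped-product heuristic of Section~\ref{subsec:bg/smms}, the $\boldsymbol{\nabla}^2$ of $\pi^\ast U$ in the $S^m$-fibre directions summed appropriately—equivalently it is the ``missing'' part of $\boldsymbol{\Delta}(\pi^\ast U) = \pi^\ast\Delta_\phi U$ not accounted for by the base Laplacian. I would compute its linearization either (a) directly, using $\widehat{\nabla^2}U = \nabla^2 U - 2\,d\sigma\odot dU + \lp\nabla\sigma,\nabla U\rp g$ together with $\hat\eta = e^{-\sigma}\eta$ and $\hat\rho = e^\sigma\rho$, or (b) more cleanly, by using the known conformal linearization of $\Delta_\phi$ (total degree $-2$, weight $w$, with correction $(n+m-2+2w)$) and subtracting the result of Step~1 with the dimension shifted from $n$ to $n+m$; the difference isolates exactly the fibre piece and yields $(m+1)\lp\onabla U,\onabla\sigma\rp + wU\bigl(\nabla^2\sigma(\eta,\eta)+m\rho^{-1}\partial_\rho\sigma\bigr)$, consistent with $(n+m+2w-2)-(n+2w-2)=m$ in the gradient coefficient once one accounts for the fact that the fibre Hessian of $\sigma$ (which is $S^m$-independent) contributes only $\onabla\sigma$-type terms plus the normal/radial pieces. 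The zeroth-order identities $(\oJ U)' = -U\oDelta\sigma$ and $(UP(\eta,\eta))' = -U\nabla^2\sigma(\eta,\eta)$ follow from the standard transformation $\widehat{\oJ} = e^{-2\sigma}(\oJ + \oDelta\sigma - \frac{n-2}{2}\lv\onabla\sigma\rv^2)$ and $\widehat{P}_{\eta\eta} = e^{-2\sigma}$-rescaled $(P(\eta,\eta) + \nabla^2\sigma(\eta,\eta) - \ldots)$: since here $w$ is paired with a degree-$(-2)$ factor $\oJ$ or $P(\eta,\eta)$ and the operator acts on a scalar $U$ carrying weight $w$, the $wU\times(\text{linear in }\sigma)$ contributions from prefactor, weight, and the explicit conformal correction all cancel except the single quadratic-vanishing linear term $-U\oDelta\sigma$ (resp.\ $-U\nabla^2\sigma(\eta,\eta)$).

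\textbf{Step 3: the $\rho^{-1}\Delta\rho$ term and assembly.} For the last identity I would use $\widehat{\Delta\rho} = e^{-2\sigma}(\Delta(e^\sigma\rho) + (n-1)\lp\nabla\sigma,\nabla(e^\sigma\rho)\rp)$ and $\widehat{\rho^{-1}} = e^{-\sigma}\rho^{-1}$; expanding $\Delta(e^\sigma\rho) = e^\sigma(\Delta\rho + 2\lp\nabla\sigma,\nabla\rho\rp + \rho\Delta\sigma + \rho\lv\nabla\sigma\rv^2)$, dividing, and differentiating at $t=0$ (this quantity is degree $-2$, so prefactor exponent is $w-2$), the surviving linear terms combine—using $\lv\nabla\sigma\rv^2 = \lv\onabla\sigma\rv^2 + (\partial_\rho\sigma)^2$ and $\Delta\sigma = \oDelta\sigma + \nabla^2\sigma(\eta,\eta) + H\partial_\rho\sigma + \ldots$, with the first-order ($(\partial_\rho\sigma)^2$, $H$) pieces dropping at linear order—into $\bigl(\oDelta\sigma + \nabla^2\sigma(\eta,\eta) + (n+1)\rho^{-1}\partial_\rho\sigma\bigr)U$. \textbf{The main obstacle} I anticipate is bookkeeping in Step~2 and Step~3: correctly separating the full ambient Hessian/Laplacian into its tangential ($\oDelta$, $\onabla$), normal ($\nabla^2(\eta,\eta)$, $\partial_\rho$), and mean-curvature pieces, and verifying that all the terms which are nonlinear in the $1$-jet of $\sigma$ (the $\lv\onabla\sigma\rv^2$, $(\partial_\rho\sigma)^2$, and $H$-weighted terms) genuinely vanish upon taking $\partial_t\rv_{t=0}$—this is where a sign error or a dropped $\rho^{-1}$ factor is most likely to creep in, and where careful use of the asymptotic structure \eqref{eqn:geodesic_metric}–\eqref{eqn:gamma_admissible} ($M$ totally geodesic since $\gamma>1/2$, so $H=O(\rho)$-type corrections vanish in the limit) is essential to make the limits well-defined.
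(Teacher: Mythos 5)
Your overall approach is identical to the paper's (the paper's proof simply cites the first-order conformal variations $\hP=P-t\nabla^2\sigma+O(t^2)$ and $\hnabla^2U=\nabla^2U-t\,dU\otimes d\sigma-t\,d\sigma\otimes dU+t\lp\nabla U,\nabla\sigma\rp g$ and says the identities ``readily follow''), and your Steps~1 and~3 and the two zeroth-order identities are sound, but there are two slips worth flagging.

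First, a trivial one: in stating the transformation of $\oJ$ you wrote $\widehat{\oJ}=e^{-2\sigma}(\oJ+\oDelta\sigma-\cdots)$; with this paper's sign convention (where $L_{2,\phi}^m=-\Delta_\phi+\cdots$ is positive) the linear term is $-\oDelta\sigma$, which is of course what you need to conclude $(\oJ U)'=-U\oDelta\sigma$, so this is a typo rather than a real error.

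Second, and more substantively, your ``cleaner'' route (b) in Step~2 contains a dimension bookkeeping error that you then try to repair with a hand-wave. The weighted Laplacian $\Delta_\phi$ on $(X^{n+1},g,\rho,m,1)$ models the Laplacian on the $(n+1+m)$-dimensional warped product $X^{n+1}\times S^m$, so the gradient coefficient in its linearization is $(n+1+m)-2+2w=n+m-1+2w$, not $n+m-2+2w$ as you wrote (the paper confirms this: in the proof of Lemma~\ref{lem:3} it cites the coefficient $(m+n+2w-1)$). Subtracting the boundary coefficient $n-2+2w$ then gives exactly $(n+m-1+2w)-(n-2+2w)=m+1$, matching the lemma with nothing left over — the ``fibre Hessian of $\sigma$'' correction you invoke to account for the missing $+1$ is a red herring and would, if actually worked out, contribute to the $\sigma$-dependent zeroth-order piece rather than to the coefficient of $\lp\onabla U,\onabla\sigma\rp$. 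With the coefficient corrected, your route (b) (together with the observations that $(\partial_\rho U)(\partial_\rho\sigma)\to0$ for $U\in\mC^\gamma$, $\sigma\in\mD^\gamma$, and that the mean-curvature term in $\Delta_\phi\sigma-\oDelta\sigma$ vanishes at $M$ by total geodesy) is correct and does recover $(m+1)\lp\onabla U,\onabla\sigma\rp+wU\bigl(\nabla^2\sigma(\eta,\eta)+m\rho^{-1}\partial_\rho\sigma\bigr)$; alternatively your direct route (a) also works without any dimension counting.
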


\begin{proof}
 Let $\hg=e^{2t\sigma}g$.  It is well-known that
 \begin{align*}
  \hP & = P - t\nabla^2\sigma + O(t^2), \\
  \hnabla^2U & = \nabla^2U - t\,dU\otimes d\sigma - t\,d\sigma\otimes dU + t\lp\nabla U,\nabla\sigma\rp_g\,g ,
 \end{align*}
 and similarly for quantities defined in terms of the induced metric on $M$.  The conclusion readily follows.
\end{proof}

We next consider operators which are homogeneous of degree $-2\gamma$.

\begin{lem}
 \label{lem:3}
 Under the same hypotheses as Lemma~\ref{lem:2}, it holds that
 \begin{align*}
  \left(\oDelta\rho^m\eta U\right)^\prime & = (2m+n+2w-4)\lp\onabla\rho^m\eta U,\onabla\sigma\rp + (m+w-1)\left(\rho^m\eta U\right)\oDelta\sigma , \\
  \left(\rho^m\eta\Delta_\phi U\right)^\prime & = (m+n+2w-1)\lp\onabla\rho^m\eta U,\onabla\sigma\rp + wU\rho^m\eta\Delta_\phi\sigma \\
  & \quad + \left((m+n+2w-1)(\nabla^2\sigma(\eta,\eta)-m\rho^{-1}\partial_\rho\sigma) + w\Delta_\phi\sigma\right)\rho^m\eta U , \\
  \left(U\rho^m\eta J_\phi^m\right)^\prime & = -U\rho^m\eta\Delta_\phi\sigma .
 \end{align*}
\end{lem}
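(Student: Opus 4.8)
The plan is to reduce all three identities to the degree-$(-2)$ linearizations of Lemma~\ref{lem:2}, together with three conformal transformation laws and a handful of asymptotic facts, exploiting that each operator here is natural, homogeneous of degree $-2\gamma$, and factors through the radial operator $\rho^m\eta$ (which is natural and homogeneous of degree $m-1=2-2\gamma$). The transformation laws I would use are $\heta=e^{-\sigma}\eta$ (hence $\hrho^m\heta=e^{(m-1)\sigma}\rho^m\eta$ as operators, as in the proof of Theorem~\ref{thm:robin}), $\widehat{\Delta_\phi}U=e^{-2\sigma}\bigl(\Delta_\phi U+(m+n-1)\lp\nabla\sigma,\nabla U\rp\bigr)$, and $\widehat{J_\phi^m}=e^{-2\sigma}\bigl(J_\phi^m-\Delta_\phi\sigma-\frac{m+n-1}{2}\lv\nabla\sigma\rv^2\bigr)$; the latter two are obtained by expanding the conformal covariance~\eqref{eqn:conformally_covariant2} of $L_{2,\phi}^m=-\Delta_\phi+\frac{m+n-1}{2}J_\phi^m$ and separating the second- and zeroth-order parts in $U$ (equivalently, via the warped product, they are the conformal laws of the Laplacian and the Schouten scalar in dimension $m+n+1$). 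I would also record the asymptotic facts following from Definition~\ref{defn:gamma_admissible} and the expansions defining $\mC^\gamma$ and $\mD^\gamma$: for $\sigma\in\mD^\gamma$ one has $\lim_{\rho\to0}\rho^m\eta\sigma=0$, since $\sigma$ carries no $\rho^{2\gamma-2}$ term; for $U\in\mC^\gamma$ the limits of $\rho^m\eta U$, $\rho^m\eta\Delta_\phi U$, $\rho^m\eta\Delta_\phi\sigma$ exist; and $\rho^m\eta$ is a derivation which, in the limit $\rho\to0$, commutes with restriction to $M$ and with tangential differentiation (using that the metrics $h_r$ are even in $r$ to order $o(r^{2\gamma})$).

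For the first identity, since $\heta=e^{-\sigma}\eta$ and $\lim\rho^m(\eta\sigma)U=0$, the boundary function $\rho^m\eta U$, regarded as carrying conformal weight $m+w-1$, is conformally covariant; so $\left(\oDelta\rho^m\eta U\right)^\prime$ is given by the first formula of Lemma~\ref{lem:2} applied with $w$ replaced by $m+w-1$, and $n+2(m+w-1)-2=2m+n+2w-4$. For the third identity, substitute $\widehat{J_\phi^m}$ and $\hrho^m\heta=e^{(m-1)\sigma}\rho^m\eta$ into the linearization~\eqref{eqn:derivative} (with $k=-2\gamma$): by the Leibniz rule the term carrying $\rho^m\eta\sigma$ and the quadratic $\lv\nabla\sigma\rv^2$ term both vanish in the limit, the accumulated conformal factor $e^{-(w-2\gamma)t\sigma\rv_M}e^{(m-1)t\sigma}e^{-2t\sigma}e^{wt\sigma}$ tends to $1$ (its total exponent vanishes because $m=3-2\gamma$), and differentiating at $t=0$ leaves exactly $-U\rho^m\eta\Delta_\phi\sigma$.

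The second identity is the substantial one. Substituting $\widehat{\Delta_\phi}$ into~\eqref{eqn:derivative} and expanding $\widehat{\Delta_\phi}(e^{wt\sigma}U)$ to first order in $t$ by the Leibniz rule produces the combination $wU\Delta_\phi\sigma+(m+n-1+2w)\lp\nabla\sigma,\nabla U\rp$ inside the bracket; applying $\hrho^m\heta$ and using $\lim\rho^m\eta\sigma=0$, the conformal factor again collapses (total exponent $m+w-3=w-2\gamma$), and differentiating at $t=0$ leaves the limits of $\rho^m\eta(U\Delta_\phi\sigma)=(\rho^m\eta U)\Delta_\phi\sigma+U(\rho^m\eta\Delta_\phi\sigma)$ and of $\rho^m\eta\lp\nabla\sigma,\nabla U\rp$, since $\rho^m\eta\Delta_\phi U$ is $t$-independent. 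Collecting the coefficients $w$ and $m+n-1+2w$, the whole computation reduces to the identity
\[
 \lim_{\rho\to0}\rho^m\eta\lp\nabla\sigma,\nabla U\rp = \lp\onabla\rho^m\eta U,\onabla\sigma\rp + \bigl(\nabla^2\sigma(\eta,\eta)-m\rho^{-1}\partial_\rho\sigma\bigr)\rho^m\eta U .
\]
To prove this I would start from $\eta\lp\nabla\sigma,\nabla U\rp=\nabla^2\sigma(\nabla U,\eta)+\nabla^2 U(\nabla\sigma,\eta)$, decompose each of $\nabla\sigma$, $\nabla U$ into its $\eta$-component and the component tangent to the level sets of $r$, multiply by $\rho^m$, and take the limit: the normal--normal contributions are $(\rho^m\eta U)\nabla^2\sigma(\eta,\eta)$ and $(\eta\sigma)\rho^m\nabla^2 U(\eta,\eta)$, while the mixed contributions reduce to $\lp\onabla\rho^m\eta U,\onabla\sigma\rp$ (the term $\rho^m\nabla^2\sigma((\nabla U)^\top,\eta)$ vanishes in the limit because the second fundamental form of the level sets is $O(r)$).

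I expect the main obstacle to be exactly this limit identity, and within it the term $(\eta\sigma)\rho^m\nabla^2 U(\eta,\eta)$, where a factor vanishing like $r$ multiplies a factor blowing up like $r^{-1}$, so the value is genuinely sensitive to the indicial structure of $\mC^\gamma$. Getting it to equal $-m\rho^{-1}(\partial_\rho\sigma)\rho^m\eta U$ requires using that the dominant singular mode of $U\in\mC^\gamma$ is $\psi\rho^{2\gamma-2}$ and that $m+2\gamma-3=0$, i.e.\ that $\rho\,\nabla^2U(\eta,\eta)\to-m\,\partial_\rho U$ in the limit. The rest is bookkeeping: tracking which $O(r^k)$ corrections to $\eta$, $g$, $\sigma$, $U$ pair with the singular factors to survive, which makes the argument long rather than conceptually hard once Lemma~\ref{lem:2} and the transformation laws are in hand.
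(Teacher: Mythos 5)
Your proposal is correct and follows essentially the same route as the paper: deducing the first identity from Lemma~\ref{lem:2} and the homogeneity of $\rho^m\eta$, deducing the second from the conformal transformation law for $\Delta_\phi$ together with the key limit identity for $\rho^m\eta\langle\nabla\sigma,\nabla U\rangle$, and deducing the third from the conformal transformation law for $J_\phi^m$. You derive the transformation laws from~\eqref{eqn:conformally_covariant2} and supply the details of the ``straightforward computation'' which the paper leaves implicit, but there is no substantive difference in the argument.
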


\begin{proof}
 The first identity follows immediately from Lemma~\ref{lem:2} and the homogeneity of $\rho^m\eta$.

 The conformal transformation formula for the weighted Laplacian~\cite{Case2010a} yields
 \[ \left(\rho^m\eta\Delta_\phi U\right)^\prime = \rho^m\eta\left((m+n+2w-1)\lp\nabla U,\nabla\sigma\rp + wU\Delta_\phi\sigma\right) . \]
 A straightforward computation shows that
 \[ \rho^m\eta\left(\lp\nabla U,\nabla\sigma\rp\right) = \lp\onabla\sigma,\onabla\rho^m\eta U\rp + \left(\nabla^2\sigma(\eta,\eta) - m\rho^{-1}\partial_\rho\sigma\right)\rho^m\eta U , \]
 from which the second identity follows.

 Finally, the conformal transformation formula for the weighted scalar curvature (cf.\ \cite[Proposition~4.4]{Case2010a}) yields the last identity.
\end{proof}

\begin{proof}[Proof of Theorem~\ref{thm:invariant}]
 It is clear that~\eqref{eqn:invariant2/0} and~\eqref{eqn:invariant2/1} hold.

 It follows immediately from Lemma~\ref{lem:2} that the operator $B_{2,w}$ defined by
 \begin{align*}
  B_{2,w}U & := -\oDelta U + \frac{n+2w-2}{m+1}\left(\nabla^2U(\eta,\eta) + m\rho^{-1}\partial_\rho U\right) \\
  & \quad - w\left(\oJ - \frac{n+2w-2}{m+1}\left(P(\eta,\eta) - \frac{m}{n+1}\left(\oJ + P(\eta,\eta) + \rho^{-1}\Delta\rho\right)\right)\right)U
 \end{align*}
 satisfies $\left(B_{2,w}U\right)^\prime=0$ for any $w\in\bR$.  This yields~\eqref{eqn:invariant2/2} upon observing that $B_{2}^{2\gamma}=\frac{2-\gamma}{\gamma-1}B_{2,-\frac{n-2\gamma}{2}}$.

 It follows immediately from Lemma~\ref{lem:2} and Lemma~\ref{lem:3} that the operator $B_{2\gamma,w}$ defined by
 \begin{align*}
  B_{2\gamma,w}U & := -\rho^m\eta\Delta_\phi U + \frac{m+n+2w-1}{2m+n+2w-4}\oDelta\rho^m\eta U + T_{2,w}\rho^m\eta U - w\left(\rho^m\eta J_\phi^m\right)U, \\
  T_{2,w} & := \left(\frac{(m+w-1)(m+n+2w-1)}{2m+n+2w-4} - w\right)\oJ - (m+n+3w-1)P(\eta,\eta) \\
  & \quad - \frac{m(m+n+w-1)}{n+1}\left(\oJ + P(\eta,\eta) + \rho^{-1}\Delta\rho\right)
 \end{align*}
 satisfies $\left(B_{2\gamma,w}U\right)^\prime=0$ for any $w\in\bR$.  This yields~\eqref{eqn:invariant2/3} upon observing that $B_{2\gamma}^{2\gamma}=B_{2\gamma,-\frac{n-2\gamma}{2}}$.
\end{proof}

We next show that the operators given in Definition~\ref{defn:operators} are boundary operators associated to the weighted Paneitz operator, in the sense that the pairing
\[ \mC^\gamma\times\mC^\gamma \ni (U,V) \mapsto \left(L_{4,\phi}^mU,V\right) + \left(B_{2\gamma}^{2\gamma}U,B_{0}^{2\gamma}V\right) + \left(B_{2}^{2\gamma}U,B_{2\gamma-2}^{2\gamma}V\right) \]
is a symmetric bilinear form.  Indeed, this form can be written explicitly, and is the polarization of the energy associated to the weighted Paneitz operator on a $\gamma$-admissible smooth metric measure space with boundary.

\begin{thm}
 \label{thm:integral}
 Fix $\gamma\in(1,2)$ and set $m=3-2\gamma$.  Let $(\oX^{n+1},g,\rho,m,1)$ be a $\gamma$-admissible compact smooth metric measure space with boundary $M=\partial\oX$.  Given $U,V\in\mC^\gamma$, it holds that
 \begin{equation}
  \label{eqn:ibp}
  \int_X V\,L_{4,\phi}^mU + \oint_M \left( B_0^{2\gamma}(V)B_{2\gamma}^{2\gamma}(U) + B_{2\gamma-2}^{2\gamma}(V)B_2^{2\gamma}(U)\right) = \mQ_{2\gamma}(U,V)
 \end{equation}
 for $\mQ_{2\gamma}$ the symmetric bilinear form
 \begin{align*}
  \mQ_{2\gamma}(U,V) & = \int_X \left[ \left(\Delta_\phi U\right)\left(\Delta_\phi V\right)  - \left(4P-(n-2\gamma+2)J_\phi^mg\right)(\nabla U,\nabla V) + \frac{n-2\gamma}{2}Q_\phi^mUV \right] \\
  & \quad + \oint_M \biggl[ \frac{1}{\gamma-1}\left( \lp\onabla B_{0}^{2\gamma}(U),\onabla B_{2\gamma-2}^{2\gamma}(V)\rp + \lp\onabla B_{0}^{2\gamma}(V),\onabla B_{2\gamma-2}^{2\gamma}(U)\rp\right) \\
  & \qquad + \frac{n-2\gamma}{2}T_{2}^{2\gamma}\left(B_{0}^{2\gamma}(U)B_{2\gamma-2}^{2\gamma}(V) + B_{0}^{2\gamma}(V)B_{2\gamma-2}^{2\gamma}(U)\right) \\
  & \qquad + \frac{n-2\gamma}{2}\left(\rho^m\eta J_\phi^m\right)B_{0}^{2\gamma}(U)B_{0}^{2\gamma}(V) \biggr] .
 \end{align*}
 In particular, $\mQ_{2\gamma}$ is conformally invariant.
\end{thm}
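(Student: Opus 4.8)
The plan is to prove the integration-by-parts identity~\eqref{eqn:ibp} by a direct computation from the definitions of $L_{4,\phi}^m$ and of the boundary operators of Definition~\ref{defn:operators}, and then to obtain the conformal invariance of $\mQ_{2\gamma}$ as a formal consequence of~\eqref{eqn:ibp}, Theorem~\ref{thm:invariant}, and the conformal covariance~\eqref{eqn:conformally_covariant4} of the weighted Paneitz operator. Throughout one works with $U,V\in\mC^\gamma$ and uses that $\Delta_\phi$ is formally self-adjoint with respect to $\rho^m\dvol_g$, together with the weighted divergence theorem $\int_X(\delta_\phi W)\,\rho^m\dvol_g=\oint_M\lim_{\rho\to0}\bigl(\rho^m\lp W,\eta\rp\bigr)$, which follows from applying the ordinary divergence theorem to $\rho^mW$ on the level sets of $r$ and noting that $\eta=-\frac{\rho}{r}\nabla^g r$ differs from the outward $g$-unit normal of $M$ only by the factor $\frac{\rho}{r}\to1$.

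First I would integrate the three pieces of $\int_X V\,L_{4,\phi}^mU$ separately. Two integrations by parts in the bi-Laplacian term give
\[ \int_X V\,(-\Delta_\phi)^2U = \int_X (\Delta_\phi U)(\Delta_\phi V) + \oint_M\bigl( V\,\rho^m\eta\Delta_\phi U - \rho^m(\eta V)\,\Delta_\phi U\bigr); \]
one integration by parts in the second-order term $\delta_\phi\bigl((4P_\phi^m-(m+n-1)J_\phi^mg)(\nabla U)\bigr)$ produces the interior integrand $-(4P_\phi^m-(n-2\gamma+2)J_\phi^mg)(\nabla U,\nabla V)$ of $\mQ_{2\gamma}$ --- using $m+n-1=n-2\gamma+2$, and $P_\phi^m=P$ on the compactification of a Poincar\'e--Einstein manifold by Lemma~\ref{lem:pe_smms_formulae} --- together with a boundary integral of $V\rho^m$ against the contraction of $4P_\phi^m-(m+n-1)J_\phi^mg$ with $\nabla U$ and $\eta$; the zeroth-order term contributes $\tfrac{m+n-3}{2}Q_\phi^mUV=\tfrac{n-2\gamma}{2}Q_\phi^mUV$ directly. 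At this stage the interior integral is already the bulk part of $\mQ_{2\gamma}$, and everything that remains is the identification of the boundary integrals over $M$.

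The bookkeeping of those boundary integrals is the heart of the proof and the main obstacle. A priori they involve quantities such as $\rho^m\eta\Delta_\phi U$, $\rho^m(\eta V)\Delta_\phi U$, $\rho^m V\,P(\nabla U,\eta)$, and $\rho^m(\eta V)\bigl(\nabla^2U(\eta,\eta)+m\rho^{-1}\partial_\rho U\bigr)$, each individually singular as $\rho\to0$; one must show, using the expansion~\eqref{eqn:mC1f_asympt} of elements of $\mC^\gamma$ together with the geodesic normal form~\eqref{eqn:geodesic_metric} and the admissibility expansion~\eqref{eqn:gamma_admissible}, that the combinations that actually appear have finite limits. The essential mechanism is indicial: $m=3-2\gamma$ is exactly the value for which $\Delta_\phi(\psi\rho^{2\gamma-2})=O(\rho^{2\gamma-2})$ rather than $O(\rho^{2\gamma-4})$ and for which $\nabla^2U(\eta,\eta)+m\rho^{-1}\partial_\rho U$ stays bounded, while the ``even modulo $o(\rho^{2\gamma})$'' structure forced by $\gamma$-admissibility removes the would-be $O(\rho^{2\gamma-3})$ obstruction; consequently $\Delta_\phi U$, $\rho^m\eta\Delta_\phi U$, $\rho^m\eta U$ and $\rho^m\eta J_\phi^m$ all have finite boundary limits. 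Once convergence is in hand, one recognizes $-\rho^m\eta\Delta_\phi U$ as the leading piece of $B_{2\gamma}^{2\gamma}U$ and, after one further integration by parts \emph{on the closed manifold $M$} to move a tangential gradient off of $\rho^m\eta U$ (and off of the $\oDelta f$ term hidden in $\Delta_\phi U\rv_M$), produces the remaining pieces of $B_{2\gamma}^{2\gamma}U$ --- the $\oDelta\rho^m\eta U$, $S_2^{2\gamma}\rho^m\eta U$, and $(\rho^m\eta J_\phi^m)U$ contributions --- together with the operator $B_2^{2\gamma}U$ paired against $B_{2\gamma-2}^{2\gamma}V=\rho^m\eta V$, plus the symmetric gradient pairing $\lp\onabla B_0^{2\gamma}(U),\onabla B_{2\gamma-2}^{2\gamma}(V)\rp+\lp\onabla B_0^{2\gamma}(V),\onabla B_{2\gamma-2}^{2\gamma}(U)\rp$ kept in $\mQ_{2\gamma}$, which is the manifestly symmetric endpoint of that last integration by parts rather than its fully reduced form. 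The genuinely laborious step is then checking that the curvature coefficients left over match $T_2^{2\gamma}$ of~\eqref{eqn:T} and $S_2^{2\gamma}$ of~\eqref{eqn:S}; this uses the boundary relations among $J$, $J_\phi^m$, $P(\eta,\eta)$, $\oJ$, and $\rho^{-1}\Delta\rho$, and it is cleanest to carry it out first in the geodesic normal form $\rho=r$ (where $\eta=-\nabla^gr$, $\Phi=0$, and most terms drop) and then to restore the general $\gamma$-admissible case either by tracking the $\Phi$-dependent corrections or by a conformal change via Lemma~\ref{lem:gamma_admissible_mC}; the weighted Bochner formula of Appendix~\ref{sec:appendix} is convenient for the $\nabla^2U(\eta,\eta)+m\rho^{-1}\partial_\rho U$ contributions.

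Granting~\eqref{eqn:ibp}, the conformal invariance of $\mQ_{2\gamma}$ is immediate: the identity writes $\mQ_{2\gamma}(U,V)$ as $\int_X V\,L_{4,\phi}^mU$ plus the pairings $\oint_M B_0^{2\gamma}(V)B_{2\gamma}^{2\gamma}(U)$ and $\oint_M B_{2\gamma-2}^{2\gamma}(V)B_2^{2\gamma}(U)$, and a direct check of exponents --- using~\eqref{eqn:conformally_covariant4} for $L_{4,\phi}^m$, \eqref{eqn:invariant2/0}--\eqref{eqn:invariant2/3} for the boundary operators, and that $\rho^m\dvol_g$ and $\dvol_{g\rv_{TM}}$ pick up factors $e^{(m+n+1)\sigma}$ and $e^{n\sigma}$ --- shows that each of these three quantities is unchanged under $g\mapsto e^{2\sigma}g$, $\rho\mapsto e^\sigma\rho$, $U\mapsto e^{-\frac{n-2\gamma}{2}\sigma}U$, $V\mapsto e^{-\frac{n-2\gamma}{2}\sigma}V$; here one also uses that $e^{-\frac{n-2\gamma}{2}\sigma}U$ lies in the space $\mC^\gamma$ of the rescaled smooth metric measure space whenever $U\in\mC^\gamma$ and $\sigma\in\mD^\gamma$.
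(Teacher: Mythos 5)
Your argument follows essentially the same route as the paper's proof: integrate by parts in the three pieces of $\int_X V\,L_{4,\phi}^mU$, reorganize the resulting boundary integrals using the Gauss--Codazzi consequences $J=\oJ+P(\eta,\eta)$, $P(\eta,\nabla U)=P(\eta,\eta)\,\eta U$ (valid because $\gamma$-admissibility with $\gamma>1$ forces $M$ to be totally geodesic) together with the weighted curvature identities of Lemma~\ref{lem:pe_smms_formulae}, and then deduce the conformal invariance of $\mQ_{2\gamma}$ from~\eqref{eqn:ibp}, Theorem~\ref{thm:invariant}, and~\eqref{eqn:conformally_covariant4}. Your added attention to the $\rho\to0$ convergence of the individually singular boundary integrands --- and the observation that the value $m=3-2\gamma$ together with the ``even modulo $o(\rho^{2\gamma})$'' structure of $\gamma$-admissibility produces the necessary indicial cancellations --- is a useful clarification of what the paper dismisses as ``a straightforward computation.''
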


\begin{proof}
 \eqref{eqn:ibp} follows from a straightforward computation using integration by parts, the consequences
 \begin{equation}
  \label{eqn:gauss_codazzi}
  \begin{split}
   J & = \oJ + P(\eta,\eta) , \\
   P(\eta,\nabla U) & = P(\eta,\eta)\,\eta U
  \end{split}
 \end{equation}
 of the Gauss--Codazzi equations, and~\eqref{eqn:J_to_weight}.
 
 It follows immediately from Theorem~\ref{thm:invariant} and~\eqref{eqn:ibp} that $\mQ_{2\gamma}$ is conformally invariant.
\end{proof}
\section{Some asymptotic expansions}
\label{sec:asymptotics}

For Poincar\'e--Einstein manifolds, the fractional GJMS operator $P_1$ can be interpreted as the Dirichlet-to-Neumann operator for functions in the kernel of the conformal Laplacian~\cite{ChangGonzalez2011}.  There is another conformally covariant operator defined on the boundary with the same principal symbol as $P_1$, namely $f\mapsto B_{1}^{1}U$ for $U$ the unique extension of $f$ in the kernel of the conformal Laplacian~\cite{Branson1997}.  In fact, these two operators are the same~\cite{GuillarmouGuillope2007}.

The boundary operators introduced in Section~\ref{sec:boundary} all give conformally covariant operators as follows: Fix $\gamma\in(0,2)\setminus\{1\}$ and set $k=\lfloor\gamma\rfloor+1$ and $m=2k-1-2\gamma$.  Let $(X^{n+1},M^n,g_+)$ be a Poincar\'e--Einstein manifold with $n>2\gamma$, let $\rho$ be a $\gamma$-admissible defining function, and consider $(\oX,\rho^2g_+,\rho,m,1)$.  Given a function $f\in C^\infty(M)$, let $U$ be the unique extension of $f$ in $\mC_{f,0}^\gamma$ such that $L_{2k,\phi}^mU=0$.  Then the map $\mB_{2\gamma}(f):=B_{2\gamma}^{2\gamma}U$ is conformally covariant in the sense that
\[ \hmB_{2\gamma}(f) = e^{-\frac{n+2\gamma}{2}\sigma\rv_M}\mB_{2\gamma}\left(e^{\frac{n-2\gamma}{2}\sigma\rv_M}f\right) \]
for all $\sigma\in\mD^\gamma$, where $\hmB$ is defined in terms of $(\oX^{n+1},\hg,\hrho,m,1)$ for $\hg=e^{2\sigma}g$ and $\hrho=e^\sigma\rho$.  The fractional GJMS operators can also be regarded as generalized Dirichlet-to-Neumann operators associated to the kernel of the weighted conformal Laplacian and the weighted Paneitz operator in the cases $\gamma\in(0,1)$ and $\gamma\in(1,2)$, respectively~\cite{CaseChang2013}.  We show that, as in the case $\gamma=1/2$, the operators $\mB_{2\gamma}$ and $P_{2\gamma}$ are the same.

\subsection{The case $\gamma\in(0,1)$}
\label{subsec:asymptotics/1}

A direct computation using the definition of $B_{2\gamma}^{2\gamma}$ and~\cite[Theorem~4.1]{CaseChang2013} readily shows that the fractional GJMS operator $P_{2\gamma}$ and the operator $\mB_{2\gamma}$ defined above are the same when $\gamma\in(0,1)$.

\begin{prop}
 \label{prop:extension}
 Fix $\gamma\in(0,1)$ and set $m=1-2\gamma$.  Let $(X^{n+1},M^n,g_+)$ be a Poincar\'e--Einstein manifold such that $\frac{n^2}{4}-\gamma^2\not\in\sigma_{pp}(-\Delta_{g_+})$.  Let $\rho$ be a $\gamma$-admissible defining function.  Given $f\in C^\infty(M)$, let $U$ be the solution to the boundary value problem
 \begin{equation}
  \label{eqn:bvp0}
  \begin{cases}
   L_{2,\phi}^mU = 0, & \text{in $\left(X^{n+1},\rho^2g_+,\rho,m,1\right)$}, \\
   U = f, & \text{on $M$}.
  \end{cases}
 \end{equation}
 Then
 \begin{equation}
 \label{eqn:extension_conclusion}
 P_{2\gamma}f = -\frac{d_\gamma}{2\gamma} B_{2\gamma}^{2\gamma}U .
 \end{equation}
\end{prop}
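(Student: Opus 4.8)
The plan is to match the asymptotic expansion of the extension $U$ solving the boundary value problem~\eqref{eqn:bvp0} against the expansion of the scattering solution, and then to read off $B_{2\gamma}^{2\gamma}U$ from the explicit formula in Definition~\ref{defn:operators0}. Recall from the discussion following Definition~\ref{defn:operators0} that, for $U\in\mC_f^\gamma$ with $U = f + \psi\rho^{2\gamma} + o(\rho^{2\gamma})$ near $M$ and with $\rho/r = 1 + \Phi r^{2\gamma} + o(r^{2\gamma})$, one has
\[ B_{2\gamma}^{2\gamma}U = -2\gamma\left(\psi + \tfrac{n-2\gamma}{2}\Phi f\right). \]
So the entire task reduces to identifying $\psi$ in terms of the fractional GJMS operator.

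First I would invoke~\cite[Theorem~4.1]{CaseChang2013}, which is exactly the extension theorem for the weighted conformal Laplacian: under the stated spectral hypothesis $\frac{n^2}{4}-\gamma^2\notin\sigma_{pp}(-\Delta_{g_+})$, the boundary value problem~\eqref{eqn:bvp0} has a unique solution $U\in\mC_f^\gamma$, and moreover
\[ P_{2\gamma}f = -\frac{d_\gamma}{2\gamma}\lim_{\rho\to0}\left(\rho^m\eta U - \gamma(n-2\gamma)\Phi U\right), \]
as quoted verbatim in the introduction of the present paper. The right-hand side here is, up to the factor $-d_\gamma/(2\gamma)$, almost the defining expression for $B_{2\gamma}^{2\gamma}U$ — the difference being the curvature term $\frac{n-2\gamma}{2n}U\delta\eta$ versus $-\gamma(n-2\gamma)\Phi U$. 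So the second step is purely bookkeeping: compute $\lim_{\rho\to0}\rho^m\delta\eta = H_{2\gamma} = -2n\gamma\Phi$ (already recorded after Definition~\ref{defn:operators0}), so that
\[ \frac{n-2\gamma}{2n}U\delta\eta\,\Big|_{\rho\to0\ \text{(weighted)}} = \frac{n-2\gamma}{2n}\cdot(-2n\gamma\Phi)f = -\gamma(n-2\gamma)\Phi f, \]
which is precisely the correction term appearing in the Chang--Case formula (with $U$ replaced by its boundary value $f$, legitimate since $\rho^m\to0$ kills the higher-order part of $U$ against the $O(1)$ quantity $\delta\eta$ once we've extracted the $\Phi$ coefficient). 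Hence
\[ B_{2\gamma}^{2\gamma}U = \lim_{\rho\to0}\rho^m\left(\eta U + \tfrac{n-2\gamma}{2n}U\delta\eta\right) = \lim_{\rho\to0}\left(\rho^m\eta U - \gamma(n-2\gamma)\Phi U\right) = -\frac{2\gamma}{d_\gamma}P_{2\gamma}f, \]
which rearranges to~\eqref{eqn:extension_conclusion}.

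The only genuinely delicate point is justifying that the limit defining $B_{2\gamma}^{2\gamma}U$ exists and that the two curvature terms may be identified in the limit — i.e.\ that $\rho^m\delta\eta$ and $H_{2\gamma}$ agree to the order needed and that the $o(\rho^{2\gamma})$ remainder in the expansion of $U$ does not contribute. This follows because $\eta = -\frac{\rho}{r}\nabla^g r$ together with the geodesic normal form~\eqref{eqn:geodesic_metric} and the expansion~\eqref{eqn:gamma_admissible} give $\rho^m\eta r = \frac{\rho^{m+1}}{r}$, whose $\rho\to0$ behaviour against $\partial_r U$ is controlled termwise; the $o(\rho^{2\gamma})$ piece of $U$ contributes $o(\rho^{2\gamma})\cdot\rho^m\cdot O(\rho^{-1}) = o(1)$ since $m+2\gamma-1 = 0$. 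I would remark that this is a ``direct computation,'' as the paper itself phrases it, and that the real content has already been done in~\cite{CaseChang2013}; the value added here is recognizing the Chang--Case formula as the evaluation of the conformally covariant boundary operator $B_{2\gamma}^{2\gamma}$, which is what makes the energy inequality of Theorem~\ref{thm:intro_inequality} accessible via Theorem~\ref{thm:robin}.
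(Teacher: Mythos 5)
Your proof is correct and is essentially equivalent to the paper's, though you organize the bookkeeping differently. The paper first invokes the conformal covariance of $L_{2,\phi}^m$, $B_{2\gamma}^{2\gamma}$, and $P_{2\gamma}$ (with boundary conformal factor equal to $1$) to reduce to the case $\rho = r$ geodesic, so that $\Phi = 0$; in that case it identifies $U = r^{-\frac{n-2\gamma}{2}}\mP\left(\tfrac{n}{2}+\gamma\right)f$ via \cite[Theorem~4.1]{CaseChang2013}, verifies $r^m\delta\eta \to 0$, and concludes $B_{2\gamma}^{2\gamma}U = \lim r^m \eta U = -\frac{2\gamma}{d_\gamma}P_{2\gamma}f$ from the scattering asymptotics. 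You instead work directly with the general $\gamma$-admissible $\rho$, quote the $\Phi$-corrected Chang--Case formula as stated in the introduction, and observe that the boundary curvature term $\frac{n-2\gamma}{2n}H_{2\gamma}f = -\gamma(n-2\gamma)\Phi f$ is precisely the $\Phi$-correction appearing there, so the two expressions match with no reduction needed. Both routes bottom out in \cite[Theorem~4.1]{CaseChang2013}; yours makes the role of $H_{2\gamma}$ more transparent (and foreshadows why $\mQ_{2\gamma}$ needs the $H_{2\gamma}$ boundary term to be conformally covariant), while the paper's conformal-covariance reduction avoids carrying $\Phi$ through the computation. One small imprecision: you say $\rho^m \to 0$ kills the remainder of $U$ against ``the $O(1)$ quantity $\delta\eta$,'' but $\rho^m$ need not tend to $0$ (when $\gamma > 1/2$ one has $m<0$) and $\delta\eta$ is generically $O(\rho^{-m})$, not $O(1)$; the correct reason is that $\rho^m\delta\eta$ has a finite limit $H_{2\gamma}$ while the remainder $o(\rho^{2\gamma})$ of $U$ tends to $0$, so the product tends to $0$. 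The conclusion stands.
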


\begin{proof}
 By conformal covariance, we may assume that $\rho=r$ is the geodesic defining function associated to $g\rv_{TM}$.  From~\cite[Theorem~4.1]{CaseChang2013}, we see that $U=r^{-\frac{n-2\gamma}{2}}\mP(\frac{n}{2}+\gamma)f$.  In particular, $U\in\mC_f^\gamma$ and
 \[ P_{2\gamma}f = -\frac{d_\gamma}{2\gamma}\lim_{\rho\to0}\rho^m\eta U \]
 for $\eta=-\partial_r$ the outward-pointing normal along $M$.  It is straightforward to check that $r^m\delta\eta\to0$ as $r\to0$, and hence~\eqref{eqn:extension_conclusion} holds.
\end{proof}

\subsection{The case $\gamma\in(1,2)$}
\label{subsec:asymptotics/2}

An argument similar to the one given in the proof of Proposition~\ref{prop:extension} shows that $B_{2\gamma}^{2\gamma}U$ is proportional to $P_{2\gamma}f$ if $U\in\mC_{f,0}^\gamma$ satisfies $L_{4,\phi}^mU=0$.  However, much more can be said.  Indeed, this statement remains true if $U\in\mC_{f,\psi}^\gamma$ for any $\psi\in C^\infty(M)$!  To make this more precise, we shall evaluate the operators $B_{s}^{2\gamma}$ for $s\in\{0,2\gamma-2,2,2\gamma\}$ when acting on elements of the kernel of the weighted Paneitz operator in terms of scattering operators.

We begin by investigating the asymptotic behavior of the summands which appear in the definitions of the operators $B_{s}^{2\gamma}$.  For our intended applications, it suffices to compute with respect to the compactification of a Poincar\'e--Einstein manifold by a geodesic defining function.  We first observe the following simple asymptotic behavior of the interior scalar curvature.

\begin{lem}
 \label{lem:eval}
 Fix $\gamma\in(1,2)$ and set $m=3-2\gamma$.  Let $(X^{n+1},M^n,g_+)$ be a Poincar\'e--Einstein manifold and let $r$ be a geodesic defining function.  Then, in terms of $(\oX,r^2g_+,r,m,1)$, it holds that, asymptotically near $M$,
 \begin{equation}
  \label{eqn:eval}
  J = \oJ + O(r^2) .
 \end{equation}
\end{lem}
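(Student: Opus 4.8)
The plan is to compute the scalar curvature $R$ of the compactified metric $g = r^2 g_+$ directly from the conformal transformation law, using the Einstein condition $\Ric(g_+) = -n g_+$ on the Poincar\'e--Einstein manifold and the fact that $r$ is a \emph{geodesic} defining function, so that $g = dr^2 + h_r$ near $M$ with $h_r = h + O(r^2)$ by the asymptotic expansion recalled in Subsection~\ref{subsec:bg/scattering}. First I would recall the standard formula relating the scalar curvatures of $g_+ = r^{-2} g$ and $g$: since $g_+ = e^{2w} g$ with $e^{w} = r^{-1}$, one has
\[
 R_{g_+} = e^{-2w}\left( R_g - 2n\,\Delta_g w - n(n-1)\lv\nabla^g w\rv_g^2\right),
\]
and substituting $w = -\log r$ together with $R_{g_+} = -n(n+1)$ (from $\Ric(g_+)=-ng_+$) gives an exact expression for $R_g$ in terms of $r$, $\Delta_g r$, and $\lv\nabla^g r\rv_g^2$. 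Because $r$ is geodesic, $\lv\nabla^g r\rv_g^2 \equiv 1$ near $M$, which kills the gradient term and simplifies $\Delta_g r$ to (minus) the mean curvature of the level sets $\{r = \text{const}\}$; concretely, writing $g = dr^2 + h_r$, one has $\Delta_g r = -\tfrac{1}{2} h_r^{ij}\partial_r (h_r)_{ij} = -\tfrac12 \partial_r \log\det h_r$.

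Next I would extract the asymptotics. From the even expansion $h_r = h + h_{(2)} r^2 + O(r^3)$, the quantity $\partial_r \log \det h_r$ vanishes to order $O(r)$ at $M$ — in fact it is $O(r)$ since the $r$-linear term in $h_r$ is absent — so $\Delta_g r = O(r)$ and the combination appearing in $R_g$ is controlled. One could alternatively bypass the exact formula and instead invoke equations~\eqref{eqn:J_to_grad} and~\eqref{eqn:J_to_weight} of Lemma~\ref{lem:pe_smms_formulae}: since $r$ is geodesic, $\lv\nabla r\rv_g^2 = 1$ near $M$, so the right-hand side of~\eqref{eqn:J_to_grad} vanishes, giving $J_g = -\rho^{-1}\Delta_g\rho$ exactly (with $\rho = r$), and then one needs the weaker statement that $J_g$ itself has the claimed expansion. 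The cleanest route, though, is probably to relate $J_g$ at points of $M$ to the boundary Schouten scalar $\oJ$ directly via the Gauss equation: at $M$, since $M$ is totally geodesic with respect to $g$ (the second fundamental form vanishes because $h_r$ is even in $r$), the Gauss equation gives $R_g\rv_M = R_h + 2\Ric_g(\eta,\eta)\rv_M$, and one then tracks how $R_g$ varies to first order in $r$, using that $\partial_r R_g = 0$ along $M$ — which is exactly the statement recorded after Definition~\ref{defn:geodesic} for $\gamma > 3/2$, applicable here since $\gamma\in(1,2)$ forces $m = 3-2\gamma$ and we are in the geodesic regime with enough evenness. That vanishing of $\partial_r R_g$ along $M$ is what upgrades $J = \oJ + O(r)$ to $J = \oJ + O(r^2)$.

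The main obstacle is really just bookkeeping: one must be careful that the geodesic normal form only controls $h_r$ to order $o(r^{2\gamma})$ and hence $R_g$ only to the corresponding order, but since $\gamma > 1$ we have $2\gamma > 2$, so $O(r^2)$ accuracy is comfortably within reach; and one must correctly identify the constant term as precisely $\oJ$ (not some multiple of it) and verify the $r$-linear term vanishes, which follows from the evenness of $h_r$ combined with $M$ being totally geodesic. No genuinely hard analysis is involved — the result is essentially a restatement, in the weighted language of $(\oX, r^2 g_+, r, m, 1)$, of the well-known fact that for a geodesic compactification of a Poincar\'e--Einstein manifold the scalar curvature agrees with the boundary value to second order; so I would present the argument as a short verification citing the expansion of $h_r$, the totally geodesic property, and the vanishing of $\partial_r R$ along $M$ noted after Definition~\ref{defn:geodesic}.
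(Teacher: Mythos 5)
Your route via equation~\eqref{eqn:J_to_grad} is exactly the paper's approach: since $\lv\nabla r\rv^2\equiv1$ near $M$, equation~\eqref{eqn:J_to_grad} collapses to $J=-r^{-1}\Delta r$, and the paper then records the one-line computation $r^{-1}\Delta r=-\oJ+O(r^2)$. In your write-up, however, you never complete this computation: you observe only that $\Delta_g r=O(r)$, which gives $J=O(1)$ but neither identifies the constant term with $\oJ$ nor shows the $r$-linear term vanishes. The missing concrete step is $\Delta_g r = \tfrac12\partial_r\log\det h_r = \tr_h(h_{(2)})\,r + O(r^3)$, together with $h_{(2)}=-\oP$, so that $\tr_h(h_{(2)})=-\oJ$. (Also a small sign slip: in geodesic normal form $\Delta_g r = +\tfrac12\partial_r\log\det h_r$, not $-\tfrac12$.)

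More importantly, there is a genuine gap in your preferred Gauss-equation route. You invoke the remark after Definition~\ref{defn:geodesic} that $\partial_r R=0$ along $M$, but that remark is explicitly conditional on $\gamma>3/2$, and the lemma must cover all of $\gamma\in(1,2)$, including $\gamma\in(1,3/2]$. Your phrase ``applicable here since $\gamma\in(1,2)$'' does not hold: for a general geodesic smooth metric measure space with $\gamma\in(1,3/2]$, the expansion~\eqref{eqn:geodesic_metric} permits a nonzero $r^3$ coefficient in $h_r$, and a nonzero $\tr_h(h_{(3)})$ would contribute a nonzero $r$-linear term to $J=-r^{-1}\Delta r$. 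The fix is to use the Poincar\'e--Einstein hypothesis that the lemma actually assumes: the Fefferman--Graham expansion of $h_r$ for a Poincar\'e--Einstein metric (with $n\geq3$) is even in $r$ at least through order $3$, independent of $\gamma$, so $h_{(3)}=0$ and the $r$-linear term in $J$ vanishes. This is exactly what the paper's ``straightforward computation'' implicitly relies on. Your argument reaches the right conclusion, but it needs to ground the evenness-to-order-three in the Poincar\'e--Einstein expansion rather than in the regularity class of Definition~\ref{defn:geodesic}.
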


\begin{proof}
 A straightforward computation shows that
 \[ r^{-1}\Delta r = -\oJ + O(r^2) . \]
 The conclusion now follows from~\eqref{eqn:J_to_grad}.
\end{proof}

We next compute certain derivatives of elements of $\mC^\gamma$.

\begin{lem}
 \label{lem:evalU}
 Fix $\gamma\in(1,2)$ and set $m=3-2\gamma$.  Let $(X^{n+1},M^n,g_+)$ be a Poincar\'e--Einstein manifold and let $r$ be a geodesic defining function.  Let $U\in\mC^\gamma$ have the expansion~\eqref{eqn:mC1f_asympt} asymptotically near $M$.  Then, in terms of $(\oX,r^2g_+,r,m,1)$, it holds that
 \begin{align}
  \label{eqn:eval_eta} \lim_{r\to0} \left[r^m\eta U\right] & = 2(1-\gamma)\psi , \\
  \label{eqn:eval_Hess} \lim_{r\to0} \left[\nabla^2U(\eta,\eta) + mr^{-1}\partial_r U\right] & = 4(2-\gamma)f_2 , \\
  \label{eqn:eval_Deltaphi} \lim_{r\to0} \left[-r^m\eta\Delta_\phi U\right] & = 2(\gamma-1)\Bigl[4\gamma\psi_2 + \oDelta\psi - 2(\gamma-1)\oJ\psi\Bigr] .
 \end{align}
\end{lem}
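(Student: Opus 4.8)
The plan is to compute all three limits directly from the asymptotic expansion~\eqref{eqn:mC1f_asympt} of $U$. Since $r$ is a geodesic defining function, $g=r^2g_+$ is in the normal form $g=dr^2+h_r$ with $h_r=h+h_{(2)}r^2+O(r^4)$ and $h_{(2)}=-\oP$; consequently $\nabla^g r=\partial_r$, the $r$-curves are unit-speed geodesics (so $\nabla_{\partial_r}\partial_r=0$), $\eta=-\partial_r$, $\tfrac12\partial_r\log\det h_r=-\oJ r+O(r^3)$, and $\Delta_{h_r}=\oDelta+O(r^2)$ as operators on the slices $\{r=\text{const}\}$. I will differentiate the expansion of $U$ term by term; this is legitimate because elements of $\mC^\gamma$ have the stated expansion together with their derivatives (or one first reduces to the case where the $o(r^{2\gamma})$ remainder is genuinely $O(r^{2\gamma+\epsilon})$ with all derivatives). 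The only delicate point in the bookkeeping is that the cancellation of singular terms below uses $m=3-2\gamma$ exactly, while the vanishing of the error terms in the limit uses $\gamma\in(1,2)$, i.e.\ $2\gamma-2>0$ and $4-2\gamma>0$.

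For~\eqref{eqn:eval_eta}: differentiating $U=f+\psi r^{2\gamma-2}+f_2r^2+\psi_2r^{2\gamma}+o(r^{2\gamma})$ once in $r$ and multiplying by $r^m=r^{3-2\gamma}$ gives $r^m\eta U=-r^m\partial_rU=-(2\gamma-2)\psi+O(r^{4-2\gamma})$, which tends to $2(1-\gamma)\psi$. For~\eqref{eqn:eval_Hess}: since $\nabla_{\partial_r}\partial_r=0$ we have $\nabla^2U(\eta,\eta)=\partial_r^2U$, so the quantity equals $A:=\partial_r^2U+mr^{-1}\partial_rU$; the coefficient of $r^{2\gamma-4}$ in $A$ is $(2\gamma-2)\bigl[(2\gamma-3)+m\bigr]\psi=0$, the coefficient of $r^0$ is $2(1+m)f_2=4(2-\gamma)f_2$, and (recorded for later use) the coefficient of $r^{2\gamma-2}$ is $2\gamma\bigl[(2\gamma-1)+m\bigr]\psi_2=4\gamma\psi_2$; all remaining terms are $o(1)$, giving~\eqref{eqn:eval_Hess}.

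For~\eqref{eqn:eval_Deltaphi}: start from $\Delta_\phi U=A+\tfrac12(\partial_r\log\det h_r)\,\partial_rU+\Delta_{h_r}U$, with $A$ as above; substituting $\tfrac12\partial_r\log\det h_r=-\oJ r+O(r^3)$ and $\Delta_{h_r}=\oDelta+O(r^2)$, inserting the expansion of $U$, and collecting powers of $r$ yields
\[
\Delta_\phi U=\bigl(\oDelta f+4(2-\gamma)f_2\bigr)+\bigl(\oDelta\psi+4\gamma\psi_2-2(\gamma-1)\oJ\psi\bigr)r^{2\gamma-2}+O(r^2)+o(r^{2\gamma}),
\]
where the $r^{2\gamma-2}$ coefficient collects $4\gamma\psi_2$ from $A$, $-(2\gamma-2)\oJ\psi$ from $-\oJ r\,\partial_rU$, and $\oDelta\psi$ from $\oDelta U$. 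Differentiating in $r$, multiplying by $-r^m$, and using $\eta=-\partial_r$: the $r^0$ term drops (it is $r$-independent), the $O(r^2)$ and $o(r^{2\gamma})$ terms contribute $O(r^{4-2\gamma})+o(r^2)\to0$, and only the $r^{2\gamma-2}$ term survives, giving $\lim(-r^m\eta\Delta_\phi U)=(2\gamma-2)\bigl(\oDelta\psi+4\gamma\psi_2-2(\gamma-1)\oJ\psi\bigr)$, which is~\eqref{eqn:eval_Deltaphi}.

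I expect the main obstacle to be~\eqref{eqn:eval_Deltaphi}: one must correctly assemble its $r^{2\gamma-2}$ coefficient from three separate contributions --- the weighted radial operator $A$, the slice-curvature term $\tfrac12\partial_r\log\det h_r$, and the tangential Laplacian $\Delta_{h_r}$ --- and check that no term of order strictly between $r^0$ and $r^{2\gamma-2}$ is produced, which is exactly where $\gamma\in(1,2)$ and the value $m=3-2\gamma$ enter; justifying the term-by-term differentiation of the $o(r^{2\gamma})$ remainder is a secondary technical matter.
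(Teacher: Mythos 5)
Your proof is correct and follows essentially the same route as the paper: compute $\Delta_\phi U$ term by term from the expansion~\eqref{eqn:mC1f_asympt} using the normal form $g=dr^2+h_r$ with $h_{(2)}=-\oP$, and read off the limits. The paper's proof simply records the two intermediate expansions (for $r^{-m}\partial_r(r^m\partial_rU)$ and $\Delta_\phi U$) without displaying the bookkeeping; you spell out the same bookkeeping, including the cancellation of the $r^{2\gamma-4}$ term via $m=3-2\gamma$ and the assembly of the $r^{2\gamma-2}$ coefficient from the radial, volume-derivative, and tangential pieces, all of which is correct.
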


\begin{proof}
 \eqref{eqn:eval_eta} is an immediate consequence of~\eqref{eqn:mC1f_asympt}.

 We next compute that
 \[ r^{-m}\partial_r\left(r^m\partial_rU\right) = 4(2-\gamma)f_2 + 4\gamma\psi_2r^{2\gamma-2} + o(r^{2\gamma-2}), \]
 from which~\eqref{eqn:eval_Hess} immediately follows.

 Finally,
 \[ \Delta_\phi U = \oDelta f + 4(2-\gamma)f_2 + \left(4\gamma\psi_2 + \oDelta\psi - 2(\gamma-1)\oJ\psi\right)r^{2\gamma-2} + o(r^{2\gamma-2}) . \]
 Differentiating yields~\eqref{eqn:eval_Deltaphi}.
\end{proof}

Combining Lemma~\ref{lem:eval} and Lemma~\ref{lem:evalU} yields the following evaluations of the operators $B_{s}^{2\gamma}$ in terms of the asymptotic expansion~\eqref{eqn:mC1f_asympt}.

\begin{prop}
\label{prop:eval}
 Fix $\gamma\in(1,2)$ and set $m=3-2\gamma$.  Let $(X^{n+1},M^n,g_+)$ be a Poincar\'e--Einstein manifold and let $r$ be a geodesic defining function.  Let $U\in\mC^\gamma$ be such that~\eqref{eqn:mC1f_asympt} holds near $M$.  In terms of $(\oX,r^2g_+,r,m,1)$, it holds that
 \begin{align}
  \label{eqn:eval0} B_{0}^{2\gamma}U & = f, \\
  \label{eqn:eval1} B_{2\gamma-2}^{2\gamma}U & = 2(1-\gamma)\psi, \\
  \label{eqn:eval2} B_{2}^{2\gamma}U & = \frac{2-\gamma}{\gamma-1}\left(-\oDelta f + \frac{n-2\gamma}{2}\oJ f\right) + 4(2-\gamma)f_2 , \\
  \label{eqn:eval3} B_{2\gamma}^{2\gamma}U & = 8\gamma(\gamma-1)\psi_2 - 2\gamma\left(-\oDelta\psi + \frac{n+2\gamma-4}{2}\oJ\psi\right) .
 \end{align}
\end{prop}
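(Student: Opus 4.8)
The plan is to substitute the definitions of the operators $B_s^{2\gamma}$ from Definition~\ref{defn:operators} and to evaluate each summand in the limit $r\to0$, using the asymptotic expansion~\eqref{eqn:mC1f_asympt} together with Lemma~\ref{lem:evalU} for the factors involving $U$, and Lemma~\ref{lem:eval}, Lemma~\ref{lem:pe_smms_formulae}, and the Gauss--Codazzi consequences~\eqref{eqn:gauss_codazzi} for the curvature factors $T_2^{2\gamma}$, $S_2^{2\gamma}$, and $\rho^m\eta J_\phi^m$. Throughout one has $\rho=r$ and $\eta=-\partial_r$. The identities~\eqref{eqn:eval0} and~\eqref{eqn:eval1} are immediate: $B_0^{2\gamma}U=U|_M=f$ because the exponents $2\gamma-2$, $2$, and $2\gamma$ in~\eqref{eqn:mC1f_asympt} are all positive, and $B_{2\gamma-2}^{2\gamma}U=\rho^m\eta U$ has limit $2(1-\gamma)\psi$ by~\eqref{eqn:eval_eta}.

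Next I would record the limiting values of the geometric quantities. Since $r$ is a geodesic defining function, $\lv\nabla r\rv^2=1$ near $M$, so~\eqref{eqn:J_to_grad} gives $J+r^{-1}\Delta r=0$, hence $J_\phi^m=J$ by Lemma~\ref{lem:pe_smms_formulae}; Lemma~\ref{lem:eval} then yields $J_\phi^m=\oJ+O(r^2)$ and therefore $\rho^m\eta J_\phi^m=-r^m\partial_rJ=O(r^{4-2\gamma})\to0$. As $M$ is totally geodesic (because $\gamma>1/2$), the first line of~\eqref{eqn:gauss_codazzi} and Lemma~\ref{lem:eval} give $P(\eta,\eta)=J-\oJ=O(r^2)\to0$, while $r^{-1}\Delta r=-\oJ+O(r^2)$ (from the proof of Lemma~\ref{lem:eval}), so $\oJ+r^{-1}\Delta r+P(\eta,\eta)\to0$. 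Substituting these limits into~\eqref{eqn:T} and~\eqref{eqn:S} kills the $P(\eta,\eta)$-terms and the bracketed combinations, leaving $\lim_{r\to0}T_2^{2\gamma}=\frac{2-\gamma}{\gamma-1}\oJ$ and $\lim_{r\to0}S_2^{2\gamma}=\bigl(\frac{n-2\gamma}{2}+\frac{n+2\gamma-4}{2(\gamma-1)}\bigr)\oJ$.

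With these in hand, \eqref{eqn:eval2} follows at once: $\lim_{r\to0}\oDelta U=\oDelta f$ from~\eqref{eqn:mC1f_asympt} (the tangential operator $\oDelta$ commutes with the limit since every nonconstant term of the expansion carries a positive power of $r$), and combining this with~\eqref{eqn:eval_Hess} and the limit of $T_2^{2\gamma}$ in the definition of $B_2^{2\gamma}$ gives exactly~\eqref{eqn:eval2}. For~\eqref{eqn:eval3} I would add the four summands of $B_{2\gamma}^{2\gamma}U$: the term $-\rho^m\eta\Delta_\phi U$ contributes $2(\gamma-1)\bigl[4\gamma\psi_2+\oDelta\psi-2(\gamma-1)\oJ\psi\bigr]$ by~\eqref{eqn:eval_Deltaphi}; the term $-\frac{1}{\gamma-1}\oDelta\rho^m\eta U$ contributes $2\oDelta\psi$, using that $\rho^m\eta U$ has $r$-independent part $2(1-\gamma)\psi$ with all remaining terms carrying positive powers of $r$ (visible from the computation in the proof of Lemma~\ref{lem:evalU}); the term $S_2^{2\gamma}\rho^m\eta U$ contributes $\bigl[(1-\gamma)(n-2\gamma)-(n+2\gamma-4)\bigr]\oJ\psi$ by~\eqref{eqn:eval_eta} and the limit of $S_2^{2\gamma}$; and the $J_\phi^m$-term vanishes. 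Collecting the coefficients of $\psi_2$, $\oDelta\psi$, and $\oJ\psi$, and using the elementary identity $-4(\gamma-1)^2+(1-\gamma)(n-2\gamma)-(n+2\gamma-4)=-\gamma(n+2\gamma-4)$, produces~\eqref{eqn:eval3}.

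The only genuine obstacle is the bookkeeping in the last step: the coefficient of $\oJ\psi$ receives contributions from three of the four summands, and one must verify the algebraic collapse above; one must also justify interchanging $\oDelta$ with the $r\to0$ limit when applied both to $U$ and to $\rho^m\eta U$, which is where the pullback convention for barred operators and the fact that the nonconstant terms of the relevant expansions carry strictly positive powers of $r$ are essential. By contrast, the curvature computations reduce cleanly to~\eqref{eqn:J_to_grad}, Lemma~\ref{lem:eval}, and~\eqref{eqn:gauss_codazzi}, and~\eqref{eqn:eval0}--\eqref{eqn:eval2} are essentially free once Lemma~\ref{lem:evalU} is available.
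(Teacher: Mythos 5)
Your proof is correct and follows essentially the same approach as the paper: it applies Lemma~\ref{lem:eval}, Lemma~\ref{lem:evalU}, and the Gauss--Codazzi consequences~\eqref{eqn:gauss_codazzi} to evaluate the curvature coefficients and the $U$-dependent factors, then assembles the limits summand by summand. The only difference is that you spell out details the paper leaves implicit, notably that $\rho^m\eta J_\phi^m\to0$ and the algebraic collapse of the $\oJ\psi$-coefficient in~\eqref{eqn:eval3}, both of which check out.
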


\begin{proof}
 \eqref{eqn:eval0} is obvious, while~\eqref{eqn:eval1} is~\eqref{eqn:eval_eta}.  Combining~\eqref{eqn:gauss_codazzi} and~\eqref{eqn:eval} yields that $P(\eta,\eta)=0$.  Hence, by~\eqref{eqn:gauss_codazzi} and Lemma~\ref{lem:eval},
 \begin{equation}
  \label{eqn:asymptotic_T2}
  T_2^{2\gamma} = \frac{2-\gamma}{\gamma-1}\oJ .
 \end{equation}
 It then follows from~\eqref{eqn:eval_Hess} that~\eqref{eqn:eval2} holds.  Finally, Lemma~\ref{lem:eval} implies that
 \[ S_2^{2\gamma} = \left(\frac{n-2\gamma}{2} + \frac{n+2\gamma-4}{2(\gamma-1)}\right)\oJ . \]
 Combining this and Lemma~\ref{lem:evalU} yields~\eqref{eqn:eval3}.
\end{proof}

Applying Proposition~\ref{prop:eval} to solutions of the Poisson equation~\eqref{eqn:poisson} yields the following interpretation of the operators $B_{s}^{2\gamma}$.

\begin{cor}
 \label{cor:eval}
 Fix $\gamma\in(1,2)$ and set $m=3-2\gamma$.  Let $(X^{n+1},M^n,g_+)$ be a Poincar\'e--Einstein manifold such that $\frac{n^2}{4}-\gamma^2,\frac{n^2}{4}-(2-\gamma)^2\not\in\sigma_{pp}(-\Delta_{g_+})$.  Let $\rho$ be a $\gamma$-admissible defining function with expansion~\eqref{eqn:gamma_admissible} near $M$.  Fix $f,\psi\in C^\infty(M)$ and set $u_1=\mP\left(\frac{n}{2}+\gamma\right)f$ and $u_2=\mP\left(\frac{n}{2}+2-\gamma\right)\psi$.  Set $U=\rho^{-\frac{n-2\gamma}{2}}(u_1+u_2)$.  In terms of $(\oX,\rho^2g_+,\rho,m,1)$, it holds that
 \begin{align*}
  L_{4,\phi}^mU & = 0 , \\
  B_{0}^{2\gamma}U & = f, \\
  B_{2\gamma-2}^{2\gamma}U & = 2(1-\gamma)\psi, \\
  B_{2}^{2\gamma}U & = \frac{4(2-\gamma)}{d_{2-\gamma}}P_{4-2\gamma}\psi, \\
  B_{2\gamma}^{2\gamma}U & = \frac{8\gamma(\gamma-1)}{d_\gamma}P_{2\gamma}f .
 \end{align*}
\end{cor}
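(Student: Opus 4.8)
The plan is to reduce to the geodesic defining function and then feed the asymptotics of the Poisson solutions $u_1,u_2$ into Proposition~\ref{prop:eval}. For the reduction, let $r$ be the geodesic defining function for the representative $h:=(\rho^2g_+)\rv_{TM}$ and set $\sigma:=\log(\rho/r)$. By the expansion~\eqref{eqn:gamma_admissible} and Lemma~\ref{lem:gamma_admissible_mC}, $\sigma\in\mD^\gamma$, and $\rho_{(0)}=1$ forces $\sigma\rv_M=0$; thus $(\oX,\rho^2g_+,\rho,m,1)$ and $(\oX,r^2g_+,r,m,1)$ are pointwise conformally equivalent $\gamma$-admissible smooth metric measure spaces, and $U=\rho^{-\frac{n-2\gamma}{2}}(u_1+u_2)=e^{-\frac{n-2\gamma}{2}\sigma}\bigl(r^{-\frac{n-2\gamma}{2}}(u_1+u_2)\bigr)$ is exactly the weight $-\frac{n-2\gamma}{2}$ conformal rescaling of the analogous object built from $r$. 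Invoking the conformal covariance of $L_{4,\phi}^m$ in~\eqref{eqn:conformally_covariant4}, of the operators $B_s^{2\gamma}$ in Theorem~\ref{thm:invariant}, and of $P_{2\gamma}$ and $P_{4-2\gamma}$, together with $\sigma\rv_M=0$, then reduces all five assertions to the case $\rho=r$.

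Next I would verify $L_{4,\phi}^mU=0$. By linearity it suffices to show $L_{4,\phi}^m\bigl(r^{-\frac{n-2\gamma}{2}}u_i\bigr)=0$ for $i=1,2$. This is the factorization phenomenon for the weighted Paneitz operator on a Poincar\'e--Einstein compactification: conjugating $L_{4,\phi}^m$ by $r^{-\frac{n-2\gamma}{2}}$ produces, up to multiplication by a power of $r$, the product of the two commuting operators $-\Delta_{g_+}-\bigl(\frac{n^2}{4}-\gamma^2\bigr)$ and $-\Delta_{g_+}-\bigl(\frac{n^2}{4}-(2-\gamma)^2\bigr)$, which annihilate $u_1$ and $u_2$, respectively, by~\eqref{eqn:poisson_equation}. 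I would cite this as (part of)~\cite[Theorem~4.4]{CaseChang2013}; the spectral hypotheses are what guarantee that $u_1,u_2$, hence $U$, exist and are unique.

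With $\rho=r$, I would then read off the boundary asymptotics of $U$. Since $n-s_1=\frac n2-\gamma$, $s_1=\frac n2+\gamma$, $n-s_2=\frac n2+\gamma-2$ and $s_2=\frac n2-\gamma+2$, the expansion~\eqref{eqn:poisson_expansion} gives $r^{-\frac{n-2\gamma}{2}}u_1=F_1+G_1r^{2\gamma}$ and $r^{-\frac{n-2\gamma}{2}}u_2=F_2r^{2\gamma-2}+G_2r^2$ with $F_1\rv_M=f$ and $F_2\rv_M=\psi$, and the $r^2$-jets of $F_1,F_2$ are given by~\eqref{eqn:Fexpansion2} (with $\gamma$ replaced by $2-\gamma$ for $F_2$). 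Hence $U\in\mC^\gamma$ with expansion of the form~\eqref{eqn:mC1f_asympt} and
\[ f_2=\tfrac{1}{4(1-\gamma)}\bigl(-\oDelta f+\tfrac{n-2\gamma}{2}\oJ f\bigr)+G_2\rv_M,\qquad \psi_2=\tfrac{1}{4(\gamma-1)}\bigl(-\oDelta\psi+\tfrac{n+2\gamma-4}{2}\oJ\psi\bigr)+G_1\rv_M. \]
Now~\eqref{eqn:eval0} and~\eqref{eqn:eval1} give $B_0^{2\gamma}U=f$ and $B_{2\gamma-2}^{2\gamma}U=2(1-\gamma)\psi$ at once. Substituting $f_2$ into~\eqref{eqn:eval2}, the local curvature terms cancel, leaving $B_2^{2\gamma}U=4(2-\gamma)\,G_2\rv_M=\frac{4(2-\gamma)}{d_{2-\gamma}}P_{4-2\gamma}\psi$ by~\eqref{eqn:fractional_gjms}. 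Substituting $\psi_2$ into~\eqref{eqn:eval3}, the curvature terms cancel again, leaving $B_{2\gamma}^{2\gamma}U=8\gamma(\gamma-1)\,G_1\rv_M=\frac{8\gamma(\gamma-1)}{d_\gamma}P_{2\gamma}f$.

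I expect the main obstacle to be the kernel step for general $\psi$: one must know that $\mP(\frac n2+2-\gamma)\psi$, which supplies the $r^{2\gamma-2}$-term of $U$, still lies in the kernel of $L_{4,\phi}^m$, and this rests on the precise factorization of $L_{4,\phi}^m$ with the eigenvalue pair $\bigl\{\frac{n^2}{4}-\gamma^2,\ \frac{n^2}{4}-(2-\gamma)^2\bigr\}$. The cancellations of the local curvature terms in the third step are routine once the $(2-\gamma)$-shifted form of~\eqref{eqn:Fexpansion2} is in hand, but they should be checked carefully against the explicit coefficients appearing in the definitions of $B_2^{2\gamma}$ and $B_{2\gamma}^{2\gamma}$.
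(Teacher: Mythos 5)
Your proposal is correct and follows the paper's proof essentially step for step: reduce to the geodesic defining function by conformal covariance, apply the factorization of $(L_{4,\phi}^m)_{g_+}$ to get $L_{4,\phi}^mU=0$, read off $f_2,\psi_2$ from~\eqref{eqn:poisson_expansion} and~\eqref{eqn:Fexpansion2} (shifted to $2-\gamma$ for $u_2$), and plug into Proposition~\ref{prop:eval} so the local curvature terms cancel. The only cosmetic difference is that the paper applies Proposition~\ref{prop:eval} to $U_1$ and $U_2$ separately and adds by linearity, and it cites~\cite[Theorem~3.1]{CaseChang2013} (i.e.~the factorization~\eqref{eqn:paneitz_factorization}) rather than Theorem~4.4.
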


\begin{proof}
 By conformal covariance, we may assume that $\rho=r$ is the geodesic defining function associated to $g\rv_{TM}$.  That $L_{4,\phi}^mU=0$ follows from conformal covariance and the factorization
 \begin{equation}
  \label{eqn:paneitz_factorization}
  \left(L_{4,\phi}^m\right)_{g_+} = \left(-\Delta_{g_+}-\frac{n^2}{4}+(2-\gamma)^2\right)\circ\left(-\Delta_{g_+}-\frac{n^2}{4}+\gamma^2\right)
 \end{equation}
 of the weighted Paneitz operator of $(X^{n+1},g_+,1,m,1)$; see~\cite[Theorem~3.1]{CaseChang2013}.

 Using the asymptotic expansion~\eqref{eqn:Fexpansion2}, we observe that
 \begin{align*}
  u_1 & \cong r^{\frac{n-2\gamma}{2}}\left[f + \frac{1}{4(1-\gamma)}\left(-\oDelta f + \frac{n-2\gamma}{2}\oJ f\right)r^2 + d_\gamma^{-1}P_{2\gamma}f r^{2\gamma}\right] , \\
  u_2 & \cong r^{\frac{n-4+2\gamma}{2}}\Bigl[ \psi + d_{2-\gamma}^{-1}P_{4-2\gamma}(\psi)r^{4-2\gamma} + \frac{1}{4(\gamma-1)}\left(-\oDelta\psi + \frac{n+2\gamma-4}{2}\oJ\psi\right)r^2\Bigr] ,
 \end{align*}
 where we write $A\cong B$ if $A-B=o(r^{\frac{n+2\gamma}{2}})$.  Set $U_j=r^{-\frac{n-2\gamma}{2}}u_j$ for $j\in\{1,2\}$.  Applying Proposition~\ref{prop:eval} to $U_1$ and $U_2$ and using the linearity of the operators $B_{s}^{2\gamma}$ for $s\in\{0,2\gamma-2,2,2\gamma\}$ yields the result.
\end{proof}

By appealing to the uniqueness of solutions to $L_{4,\phi}^mU=0$ with suitable boundary conditions, we obtain two extension theorems relating the fractional GJMS operator $P_{2\gamma}$ to the operator $B_{2\gamma}^{2\gamma}$.  The first result is a reformulation of~\cite[Theorem~4.4]{CaseChang2013}.

\begin{prop}
 \label{prop:eval_mixed}
 Fix $\gamma\in(1,2)$ and set $m=3-2\gamma$.  Let $(X^{n+1},M^n,g_+)$ be a Poincar\'e--Einstein manifold such that $\frac{n^2}{4}-\gamma^2,\frac{n^2}{4}-(2-\gamma)^2\not\in\sigma_{pp}(-\Delta_{g_+})$.  Let $\rho$ be a $\gamma$-admissible defining function.  Given $f\in C^\infty(M)$, let $U$ be the unique solution to the boundary value problem
 \begin{equation}
  \label{eqn:bvp_mixed}
  \begin{cases}
   L_{4,\phi}^mU = 0, & \text{in $\left(\oX^{n+1},\rho^2g_+,\rho,m,1\right)$} \\
   B_{0}^{2\gamma}U = f, & \text{on $M$} \\
   B_{2\gamma-2}^{2\gamma}U = 0, & \text{on $M$}.
  \end{cases}
 \end{equation}
 Then
 \begin{equation}
 \label{eqn:eval_mixed}
  P_{2\gamma}f = \frac{d_\gamma}{8\gamma(\gamma-1)}B_{2\gamma}^{2\gamma}U .
 \end{equation}
\end{prop}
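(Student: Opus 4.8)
The plan is to exhibit an explicit solution of the boundary value problem~\eqref{eqn:bvp_mixed} by specializing Corollary~\ref{cor:eval}, and then to invoke the uniqueness assertion to identify it with $U$. I would set $\psi=0$ in Corollary~\ref{cor:eval}, so that $u_2=0$, and put $U_0:=\rho^{-\frac{n-2\gamma}{2}}\mP\!\left(\frac{n}{2}+\gamma\right)f$. The spectral hypotheses of the present statement are exactly those of Corollary~\ref{cor:eval}, and $\mP(\frac{n}{2}+\gamma)f$ has the expansion~\eqref{eqn:poisson_expansion} whose low-order terms are governed by~\eqref{eqn:Fexpansion2}; hence that corollary yields at once that $U_0\in\mC^\gamma$, that $L_{4,\phi}^mU_0=0$, that $B_0^{2\gamma}U_0=f$ and $B_{2\gamma-2}^{2\gamma}U_0=0$, and that $B_{2\gamma}^{2\gamma}U_0=\frac{8\gamma(\gamma-1)}{d_\gamma}P_{2\gamma}f$. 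Thus $U_0$ is a solution of~\eqref{eqn:bvp_mixed}.

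Next I would invoke the uniqueness of the solution of~\eqref{eqn:bvp_mixed}---the reformulation of~\cite[Theorem~4.4]{CaseChang2013} referred to before the statement---to conclude that the given $U$ equals $U_0$. Then $B_{2\gamma}^{2\gamma}U=B_{2\gamma}^{2\gamma}U_0=\frac{8\gamma(\gamma-1)}{d_\gamma}P_{2\gamma}f$, and dividing by $8\gamma(\gamma-1)/d_\gamma$ gives~\eqref{eqn:eval_mixed}.

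To keep the argument self-contained I would also supply the uniqueness, and this is where the two spectral assumptions enter. Suppose $W\in\mC^\gamma$ satisfies $L_{4,\phi}^mW=0$ with $B_0^{2\gamma}W=B_{2\gamma-2}^{2\gamma}W=0$; I claim $W\equiv0$. Using conformal covariance (Lemma~\ref{lem:gamma_admissible_mC} and Theorem~\ref{thm:invariant}) one reduces to the case $\rho=r$ geodesic, where $w:=r^{\frac{n-2\gamma}{2}}W$ satisfies $(L_{4,\phi}^m)_{g_+}w=0$; by the factorization~\eqref{eqn:paneitz_factorization}, $v:=\left(-\Delta_{g_+}-\frac{n^2}{4}+\gamma^2\right)w$ satisfies $\left(-\Delta_{g_+}-\frac{n^2}{4}+(2-\gamma)^2\right)v=0$. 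Reading off~\eqref{eqn:mC1f_asympt}, the conditions $B_0^{2\gamma}W=B_{2\gamma-2}^{2\gamma}W=0$ mean $f=\psi=0$ there (using $\gamma\neq1$), so $W=O(\rho^2)$, hence $w$---and, by the indicial computation, also $v$---decays like $O\bigl(r^{(n+4-2\gamma)/2}\bigr)$ near $M$, which is square-integrable with respect to $\dvol_{g_+}$ since $\gamma<2$. Then $v$ is an $L^2$ solution at $\frac{n^2}{4}-(2-\gamma)^2\notin\sigma_{pp}(-\Delta_{g_+})$, so $v\equiv0$; consequently $w$ is an $L^2$ solution at $\frac{n^2}{4}-\gamma^2\notin\sigma_{pp}(-\Delta_{g_+})$, so $w\equiv0$, i.e., $W\equiv0$.

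The step I expect to be the main obstacle is this last one: one must argue carefully, from the indicial analysis of the Poisson problem~\eqref{eqn:poisson} and the regularity built into $\mC^\gamma$, that vanishing of the leading pair $(f,\psi)$ genuinely upgrades $w$ (and then $v$) to $L^2_{g_+}$ functions to which the spectral hypotheses apply; in particular one must rule out the additional log/nonlocal terms interfering with the decay. Everything else is a direct appeal to Corollary~\ref{cor:eval} and elementary algebra.
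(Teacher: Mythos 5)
Your proof is correct and follows the paper's proof essentially verbatim: set $U_0=\rho^{-(n-2\gamma)/2}\,\mP\!\left(\frac{n}{2}+\gamma\right)f$, use Corollary~\ref{cor:eval} (equivalently Proposition~\ref{prop:eval} plus conformal covariance) to check that $U_0$ solves~\eqref{eqn:bvp_mixed} and to evaluate $B_{2\gamma}^{2\gamma}U_0$, and invoke uniqueness to conclude $U=U_0$. The only genuine addition is your sketch of the uniqueness of~\eqref{eqn:bvp_mixed}, which the paper leaves implicit (citing~\cite{CaseChang2013}); it is sound, and the decay step you flag as the delicate one does check out, since vanishing of $f$ and $\psi$ (with $\gamma>1$) gives $W=O(\rho^2)$ and hence $w=O\!\left(r^{(n+4-2\gamma)/2}\right)$, which lies in $L^2(\dvol_{g_+})$ near $M$ precisely because $\gamma<2$.
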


\begin{proof}
 Let $u=\mP\left(\frac{n}{2}+\gamma\right)f$ and set $\tilde U=\rho^{-\frac{n-2\gamma}{2}}u$.  It follows from Proposition~\ref{prop:eval} and conformal covariance that $\tilde U$ satisfies~\eqref{eqn:bvp_mixed}.  Hence, by uniqueness of solutions of~\eqref{eqn:bvp_mixed}, it holds that $U=\tilde U$.  \eqref{eqn:eval_mixed} now follows from Corollary~\ref{cor:eval}.
\end{proof}

The second result is an analogous extension theorem formulated in terms of the iterated Dirichlet data of a fourth order boundary value problem.

\begin{prop}
 \label{prop:eval_dirichlet}
 Fix $\gamma\in(1,2)$ and set $m=3-2\gamma$.  Let $(X^{n+1},M^n,g_+)$ be a Poincar\'e--Einstein manifold such that $\frac{n^2}{4}-\gamma^2,\frac{n^2}{4}-(2-\gamma)^2\not\in\sigma_{pp}(-\Delta_{g_+})$.  Let $\rho$ be a $\gamma$-admissible defining function.  Given $f\in C^\infty(M)$, let $U$ be the unique solution to the boundary value problem
 \begin{equation}
  \label{eqn:bvp_dirichlet}
  \begin{cases}
   L_{4,\phi}^mU = 0, & \text{in $\left(\oX^{n+1},\rho^2g_+,\rho,m,1\right)$} \\
   B_{0}^{2\gamma}U = f, & \text{on $M$} \\
   B_{2}^{2\gamma}U = 0, & \text{on $M$}.
  \end{cases}
 \end{equation}
 Then
 \begin{equation}
 \label{eqn:eval_dirichlet}
  P_{2\gamma}f = \frac{d_\gamma}{8\gamma(\gamma-1)}B_{2\gamma}^{2\gamma}U .
 \end{equation}
\end{prop}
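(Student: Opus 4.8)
The plan is to parallel the proof of Proposition~\ref{prop:eval_mixed}: produce an explicit solution of~\eqref{eqn:bvp_dirichlet} built from scattering data, invoke the asserted uniqueness to identify it with $U$, and then read off $B_{2\gamma}^{2\gamma}U$. By the conformal covariance of the weighted Paneitz operator~\eqref{eqn:conformally_covariant4} and of the boundary operators $B_s^{2\gamma}$ (Theorem~\ref{thm:invariant}), together with Lemma~\ref{lem:gamma_admissible_mC}, the boundary value problem~\eqref{eqn:bvp_dirichlet} is conformally covariant, so it suffices to treat the case in which $\rho=r$ is the geodesic defining function associated with $g\rv_{TM}$. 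Set $u=\mP\bigl(\frac{n}{2}+\gamma\bigr)f$ and $\tilde U=r^{-\frac{n-2\gamma}{2}}u$ --- exactly the candidate extension used in Proposition~\ref{prop:eval_mixed}.

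Next I would check that $\tilde U$ solves~\eqref{eqn:bvp_dirichlet}. Writing $u=Fr^{n-s}+Gr^s$ with $s=\frac{n}{2}+\gamma$, the factorization~\eqref{eqn:paneitz_factorization} of the weighted Paneitz operator of $(X^{n+1},g_+,1,m,1)$ together with~\eqref{eqn:poisson_equation} gives $L_{4,\phi}^m\tilde U=0$; and $\tilde U=F+Gr^{2\gamma}$ lies in $\mC_{f,0}^\gamma$, with, in the notation of~\eqref{eqn:mC1f_asympt} and using~\eqref{eqn:Fexpansion2}, $\psi=0$, $f_2=\frac{1}{4(1-\gamma)}\bigl(-\oDelta f+\frac{n-2\gamma}{2}\oJ f\bigr)$, and $\psi_2=d_\gamma^{-1}P_{2\gamma}f$. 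Hence $B_0^{2\gamma}\tilde U=f$ by~\eqref{eqn:eval0}, while feeding this value of $f_2$ into formula~\eqref{eqn:eval2} of Proposition~\ref{prop:eval} makes the two terms cancel, so $B_2^{2\gamma}\tilde U=0$; this is the $\psi=0$ specialization of Corollary~\ref{cor:eval}, in which $B_2^{2\gamma}U=\frac{4(2-\gamma)}{d_{2-\gamma}}P_{4-2\gamma}\psi$ vanishes. Thus $\tilde U$ satisfies~\eqref{eqn:bvp_dirichlet}, and uniqueness gives $U=\tilde U$.

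Finally, applying~\eqref{eqn:eval3} of Proposition~\ref{prop:eval} with $\psi=0$ and $\psi_2=d_\gamma^{-1}P_{2\gamma}f$ --- equivalently, reading off $B_{2\gamma}^{2\gamma}U$ from Corollary~\ref{cor:eval} --- yields
\[ B_{2\gamma}^{2\gamma}U=\frac{8\gamma(\gamma-1)}{d_\gamma}P_{2\gamma}f, \]
which is~\eqref{eqn:eval_dirichlet} after rearranging. I do not expect a genuine obstacle: structurally the argument is Proposition~\ref{prop:eval_mixed} verbatim, the only new ingredient being the observation that $B_2^{2\gamma}$ annihilates the locally determined second-order coefficient of the canonical $\gamma$-Poisson extension, so that this one extension satisfies both the mixed boundary condition $B_{2\gamma-2}^{2\gamma}U=0$ of~\eqref{eqn:bvp_mixed} and the Dirichlet-type condition $B_2^{2\gamma}U=0$ of~\eqref{eqn:bvp_dirichlet}. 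The one point warranting care is confirming $\tilde U\in\mC^\gamma$ --- so that the operators $B_s^{2\gamma}$ and Proposition~\ref{prop:eval} apply to it --- which is immediate from the form $\tilde U=F+Gr^{2\gamma}$ and the expansion of $F$ near $M$.
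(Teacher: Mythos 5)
Your proof is correct and follows the paper's own argument essentially verbatim: the paper also takes $\tilde U=\rho^{-\frac{n-2\gamma}{2}}\mP(\frac{n}{2}+\gamma)f$, verifies it solves~\eqref{eqn:bvp_dirichlet} via Proposition~\ref{prop:eval} and conformal covariance, invokes uniqueness to identify $U=\tilde U$, and reads off~\eqref{eqn:eval_dirichlet} from Corollary~\ref{cor:eval}. You have merely spelled out the cancellation in $B_2^{2\gamma}\tilde U$ using~\eqref{eqn:Fexpansion2}, which the paper leaves implicit inside the appeal to Corollary~\ref{cor:eval}.
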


\begin{proof}
 Let $u=\mP\left(\frac{n}{2}+\gamma\right)f$ and set $\tilde U=\rho^{-\frac{n-2\gamma}{2}}u$.  It follows from Proposition~\ref{prop:eval} and conformal covariance that $\tilde U$ satisfies~\eqref{eqn:bvp_dirichlet}.  Hence, by uniqueness of solutions of~\eqref{eqn:bvp_dirichlet}, it holds that $U=\tilde U$.  \eqref{eqn:eval_dirichlet} now follows from Corollary~\ref{cor:eval}.
\end{proof}

Surprisingly, the fractional GJMS operator $P_{2\gamma}$ can be recovered from the boundary operator $B_{2\gamma}^{2\gamma}$ without finding a unique extension.

\begin{prop}
 \label{prop:eval_surprise}
 Fix $\gamma\in(1,2)$ and set $m=3-2\gamma$.  Let $(X^{n+1},M^n,g_+)$ be a Poincar\'e--Einstein manifold such that $\frac{n^2}{4}-\gamma^2,\frac{n^2}{4}-(2-\gamma)^2\not\in\sigma_{pp}(-\Delta_{g_+})$.  Let $\rho$ be a $\gamma$-admissible defining function.  Suppose that $U\in\mC^\gamma$ satisfies
 \[ \begin{cases}
     L_{4,\phi}^mU = 0, & \text{in $\left(\oX^{n+1},\rho^2g_+,\rho,m,1\right)$} \\
     B_{0}^{2\gamma}U = f, & \text{on $M$}
    \end{cases} \]
 Then
 \begin{equation}
  \label{eqn:eval_surprise}
  P_{2\gamma}f = \frac{d_\gamma}{8\gamma(\gamma-1)}B_{2\gamma}^{2\gamma}U .
 \end{equation}
\end{prop}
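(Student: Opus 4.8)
The plan is to reduce the statement to Proposition~\ref{prop:eval_mixed} by exploiting the conformal covariance of all operators involved and the factorization~\eqref{eqn:paneitz_factorization} of the weighted Paneitz operator. The key point is that, unlike in Proposition~\ref{prop:eval_mixed} and Proposition~\ref{prop:eval_dirichlet}, we are not imposing a second boundary condition, so $U$ is not uniquely determined by $f$; we must show that the extra freedom does not affect $B_{2\gamma}^{2\gamma}U$.

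First I would pass to the geodesic defining function $\rho=r$ associated to $h=g\rv_{TM}$, which is permissible by the conformal covariance recorded in Theorem~\ref{thm:invariant} together with the covariance of $\mathcal{B}_{2\gamma}$ and $P_{2\gamma}$. Next, I would observe that any $U\in\mC^\gamma$ with $B_{0}^{2\gamma}U=f$ has an expansion of the form~\eqref{eqn:mC1f_asympt} with leading term $f$ and with $\psi,f_2,\psi_2$ to be determined; write $U=\tilde U + W$, where $\tilde U=r^{-\frac{n-2\gamma}{2}}\mP\left(\frac{n}{2}+\gamma\right)f$ is the particular extension from Proposition~\ref{prop:eval_mixed}, so that $W\in\mC^\gamma$ satisfies $L_{4,\phi}^mW=0$ and $B_{0}^{2\gamma}W=0$. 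By Proposition~\ref{prop:eval_mixed} and Corollary~\ref{cor:eval}, it suffices to prove that $B_{2\gamma}^{2\gamma}W=0$ for every such $W$.

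The heart of the argument is then to classify the solutions $W$ of $L_{4,\phi}^mW=0$ in $\mC^\gamma$ with vanishing trace. Using the factorization~\eqref{eqn:paneitz_factorization} for the metric $g_+$, the function $w:=r^{\frac{n-2\gamma}{2}}W$ solves $\left(-\Delta_{g_+}-\tfrac{n^2}{4}+(2-\gamma)^2\right)\left(-\Delta_{g_+}-\tfrac{n^2}{4}+\gamma^2\right)w=0$. Under the hypotheses $\tfrac{n^2}{4}-\gamma^2,\tfrac{n^2}{4}-(2-\gamma)^2\notin\sigma_{pp}(-\Delta_{g_+})$, every solution of the product equation with the admissible asymptotics of $\mC^\gamma$ decomposes as $w=w_1+w_2$, where $w_j$ solves the corresponding first-order factor and hence is a Poisson-type solution $\mP(\cdot)$; the $\mC^\gamma$ expansion~\eqref{eqn:mC1f_asympt} and the indicial roots $n-s,s$ at $s=\tfrac{n}{2}+\gamma$ and $s=\tfrac{n}{2}+2-\gamma$ force $w_1=\mP\left(\tfrac{n}{2}+\gamma\right)f_1$ and $w_2=\mP\left(\tfrac{n}{2}+2-\gamma\right)f_2'$ for some $f_1,f_2'\in C^\infty(M)$. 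The condition $B_{0}^{2\gamma}W=0$, i.e.\ $w\cong f_1 r^{\frac{n-2\gamma}{2}}+\cdots$ with vanishing coefficient, gives $f_1=0$ by~\eqref{eqn:eval0}; thus $W=r^{-\frac{n-2\gamma}{2}}\mP\left(\tfrac{n}{2}+2-\gamma\right)f_2'$ is of the type studied in Corollary~\ref{cor:eval} with $f=0$. Applying Proposition~\ref{prop:eval} (specifically~\eqref{eqn:eval3} with $f=\psi_2=0$, where $\psi$ is proportional to $f_2'$ and $\psi_2$ is the scattering data $d_{2-\gamma}^{-1}P_{4-2\gamma}f_2'$ shifted appropriately — and in fact the computation in Corollary~\ref{cor:eval} shows $B_{2\gamma}^{2\gamma}$ of this pure second solution vanishes) gives $B_{2\gamma}^{2\gamma}W=0$, completing the proof.

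The main obstacle is the uniqueness/classification step: one must verify carefully that a solution of the factored fourth-order equation in the function space $\mC^\gamma$ really does split as a sum of two Poisson solutions for the individual factors, and that no additional log-type or resonant terms appear — this is where the spectral hypotheses and the analysis of indicial roots at $s=\tfrac n2+\gamma$ and $s=\tfrac n2+(2-\gamma)$ are essential. Once that decomposition is in hand, the evaluation of $B_{2\gamma}^{2\gamma}$ on the two pieces is purely the computation already carried out in Proposition~\ref{prop:eval} and Corollary~\ref{cor:eval}.
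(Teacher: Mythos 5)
Your proof is correct and follows essentially the same route as the paper: both reduce to identifying $U$ (or, in your version, $U$ minus a reference scattering extension) with the explicit scattering solution $\rho^{-\frac{n-2\gamma}{2}}\left(\mP\left(\tfrac{n}{2}+\gamma\right)f + \mP\left(\tfrac{n}{2}+2-\gamma\right)\psi\right)$ and then read off $B_{2\gamma}^{2\gamma}$ from Corollary~\ref{cor:eval}. The paper's formulation is slightly more economical—it reads $\psi$ off from $B_{2\gamma-2}^{2\gamma}U$, constructs $\tilde U$ with the same $(f,\psi)$ data, and invokes uniqueness of the BVP~\eqref{eqn:bvp_surprise} in a single step, thereby bypassing the explicit classification of the homogeneous solution $W$ that you carry out; the uniqueness/classification ingredient is the same in both, and the "obstacle" you flag is present equally in the paper's appeal to uniqueness.
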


\begin{proof}
 Set $\psi=\frac{1}{2(1-\gamma)}B_{2\gamma-2}^{2\gamma}U$.  Let $u_1=\mP\left(\frac{n}{2}+\gamma\right)f$ and $u_2=\mP\left(\frac{n}{2}+2-\gamma\right)\psi$ and set $\tilde U=\rho^{-\frac{n-2\gamma}{2}}(u_1+u_2)$.  It follows from Corollary~\ref{cor:eval} that $\tilde U$ satisfies
 \begin{equation}
  \label{eqn:bvp_surprise}
  \begin{cases}
   L_{4,\phi}^m\tilde U = 0, & \text{in $X$} \\
   B_{0}^{2\gamma}\tilde U = f, & \text{on $M$} \\
   B_{2\gamma-2}^{2\gamma}\tilde U = 2(1-\gamma)\psi, & \text{on $M$}.
  \end{cases}
 \end{equation}
 By uniqueness of solutions of~\eqref{eqn:bvp_surprise}, it holds that $U=\tilde U$.  \eqref{eqn:eval_surprise} now follows from Corollary~\ref{cor:eval}.
\end{proof}
\section{The energy inequality}
\label{sec:inequality}

We are now ready to prove the energy inequalities stated in the introduction.  As throughout this article, we consider the cases $\gamma\in(0,1)$ and $\gamma\in(1,2)$ separately.  Nevertheless, the basic ideas are the same.  We start by considering the energy functional $\mE_{2\gamma}\colon C^\gamma\to\bR$ given by $\mE_{2\gamma}(U)=\mQ_{2\gamma}(U,U)$, where $\mQ_{2\gamma}$ is as in Theorem~\ref{thm:robin} and Theorem~\ref{thm:integral}.  Concretely, if $\gamma\in(0,1)$, then
\begin{equation}
 \label{eqn:mE0}
 \mE_{2\gamma}(U) = \int_X \left(\lv\nabla U\rv^2 + \frac{n-2\gamma}{2}J_\phi^mU^2\right) + \frac{n-2\gamma}{2n}\oint_M H_{2\gamma}\left(B_{0}^{2\gamma}U\right)^2 ,
\end{equation}
while if $\gamma\in(1,2)$, then
\begin{equation}
 \label{eqn:mE1}
 \begin{split}
  \mE_{2\gamma}(U) & = \int_X \left(\left(\Delta_\phi U\right)^2 - (4P-(n-2\gamma+2)J_\phi^mg)(\nabla U,\nabla U) + \frac{n-2\gamma}{2}Q_\phi^mU^2\right) \\
  & \quad + \oint_M \biggl(\frac{2}{\gamma-1}\lp\onabla B_{0}^{2\gamma}(U),\onabla B_{2\gamma-2}^{2\gamma}(U)\rp \\
  & \qquad + (n-2\gamma)T_{2}^{2\gamma}B_{0}^{2\gamma}(U)B_{2\gamma-2}^{2\gamma}(U) + \frac{n-2\gamma}{2}(\rho^m\eta J_\phi^m)\left(B_{0}^{2\gamma}U\right)^2\biggr) .
 \end{split}
\end{equation}
Using the fact that $\mC_{f,\psi}^\gamma = U + \mC_{0,0}^\gamma$ for some, and hence any, fixed $U\in\mC_{f,\psi}^\gamma$, we obtain a necessary and sufficient condition for $\mE_{2\gamma}$ to be uniformly bounded below in $\mC_{f,\psi}^\gamma$ in terms of the bottom of the $L^2$-spectrum of $L_{2k,\phi}^m$ for $k=\lfloor\gamma\rfloor+1$.  When $\mE_{2\gamma}$ is uniformly bounded below, we construct a minimizer which necessarily solves~\eqref{eqn:bvp0} or~\eqref{eqn:bvp_mixed}.  This construction together with Proposition~\ref{prop:extension} or Proposition~\ref{prop:eval_mixed}, respectively, yields the result.

\subsection{The case $\gamma\in(0,1)$}
\label{subsec:inequality/1}

Following the outline above, we start by characterizing when $\mE_{2\gamma}$ is uniformly bounded below on $\mC_f^\gamma$ in terms of the bottom of the spectrum of $-\Delta_{g_+}$ on a Poincar\'e--Einstein manifold $(X^{n+1},M^n,g_+)$.  This result generalizes an observation of Escobar~\cite{Escobar1992ac} in the case $\gamma=1/2$, and corrects an error in the remark following the statement of~\cite[Theorem~1.4]{GonzalezQing2010}.

\begin{prop}
 \label{prop:bounded_below}
 Fix $\gamma\in(0,1)$ and set $m=1-2\gamma$.  Let $(X^{n+1},M^n,g_+)$ be a Poincar\'e--Einstein manifold such that $\frac{n^2}{4}-\gamma^2\not\in\sigma_{pp}(-\Delta_{g_+})$.  Let $\rho$ be a $\gamma$-admissible defining function and consider $(\oX,\rho^2g_+,\rho,m,1)$.  Fix $f\in C^\infty(M)$.  Then
 \begin{equation}
  \label{eqn:finite_energy}
  \inf_{U\in\mC_f^\gamma} \mE_{2\gamma}(U) > -\infty
 \end{equation}
 if and only if
 \begin{equation}
  \label{eqn:spectrum}
  \lambda_1\left(-\Delta_{g_+}\right) > \frac{n^2}{4} - \gamma^2 .
 \end{equation}
\end{prop}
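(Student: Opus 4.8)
The plan is to exploit conformal covariance to reduce to the geodesic defining function and then to translate the question about the energy functional $\mE_{2\gamma}$ on the compactification into a spectral question for the Laplacian $-\Delta_{g_+}$ on the Poincar\'e--Einstein interior. First I would use Lemma~\ref{lem:gamma_admissible_mC} and the conformal covariance of $\mQ_{2\gamma}$ (Theorem~\ref{thm:robin}) to assume without loss of generality that $\rho=r$ is the geodesic defining function associated to $h=g\rv_{TM}$; in that gauge $H_{2\gamma}=0$, so the boundary term in~\eqref{eqn:mE0} drops out and $\mE_{2\gamma}(U)=\int_X(\lv\nabla U\rv^2 + \tfrac{n-2\gamma}{2}J_\phi^m U^2)\,r^m\dvol_g$.

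Next I would rewrite this interior integral in terms of $g_+$. Writing $U = r^{\frac{n-2\gamma}{2}}v$ — equivalently $v = r^{-\frac{n-2\gamma}{2}}U$ — and using $g=r^2g_+$, $r^m = r^{1-2\gamma}$, a direct computation (integration by parts, the formula~\eqref{eqn:J_to_weight} for $J_\phi^m$, and $\vol_g = r^{n+1}\vol_{g_+}$) should yield an identity of the form
\[
 \mE_{2\gamma}(U) = \int_X \left( \lv\nabla v\rv_{g_+}^2 - \left(\tfrac{n^2}{4}-\gamma^2\right) v^2 \right) \dvol_{g_+} + (\text{boundary term depending only on } f),
\]
valid for $U\in\mC_f^\gamma$; here the boundary term arises because $v$ behaves like $r^{-\frac{n-2\gamma}{2}}f + \dots$ and is finite precisely because of the asymptotics built into $\mC_f^\gamma$. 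The key point is that the quadratic form in $v$ is exactly $\int(\lv\nabla v\rv_{g_+}^2 - (\tfrac{n^2}{4}-\gamma^2)v^2)$, whose infimum over an appropriate class relates directly to whether $\lambda_1(-\Delta_{g_+})$ exceeds $\tfrac{n^2}{4}-\gamma^2$.

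For the ``if'' direction, assuming~\eqref{eqn:spectrum}, the Rayleigh quotient characterization of $\lambda_1(-\Delta_{g_+})$ gives $\int \lv\nabla w\rv_{g_+}^2 \geq \lambda_1(-\Delta_{g_+})\int w^2$ for all compactly supported $w$, and by density for all $w\in W^{1,2}(X,g_+)$ with $w$ in the $L^2$-kernel at infinity; applying this to $w = v - v_0$ where $v_0 = r^{-\frac{n-2\gamma}{2}}U_0$ for a fixed reference extension $U_0\in\mC_f^\gamma$ (note $v - v_0 = r^{-\frac{n-2\gamma}{2}}(U-U_0)$ with $U-U_0\in\mC_0^\gamma$, so $v-v_0\in L^2(g_+)$ by the Sobolev trace/Hardy-type estimate underlying the $W^{1,2}$ theory), one bounds $\mE_{2\gamma}(U)$ below by a constant depending only on $f$. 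For the ``only if'' direction, suppose~\eqref{eqn:spectrum} fails, i.e.\ $\lambda_1(-\Delta_{g_+})\leq \tfrac{n^2}{4}-\gamma^2$; then there exist compactly supported $w_j$ with $\int \lv\nabla w_j\rv_{g_+}^2 - (\tfrac{n^2}{4}-\gamma^2)\int w_j^2 \to c \leq 0$ while $\int w_j^2 \to \infty$ (if $c<0$ just scale; if $c=0$ one still gets a minimizing sequence with diverging $L^2$-norm since the form is not coercive, or uses that $\tfrac{n^2}{4}-\gamma^2$ is not in the point spectrum). Taking $v = v_0 + t w_j$ and letting $t\to\pm\infty$ (in the $c<0$ case) or taking $v = v_0 + w_j$ (in the $c=0$, non-attained case) drives $\mE_{2\gamma}(U)\to-\infty$, since the cross term $\int \lp\nabla v_0,\nabla w_j\rp_{g_+} - (\tfrac{n^2}{4}-\gamma^2)\int v_0 w_j$ is controlled by Cauchy--Schwarz against the diverging quantity.

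The main obstacle I anticipate is the functional-analytic bookkeeping at the boundary: verifying that the substitution $U\mapsto v=r^{-\frac{n-2\gamma}{2}}U$ produces a genuinely finite boundary term and that $v-v_0$ lies in the right weighted Sobolev space so that the Rayleigh-quotient inequality applies (this is where the hypothesis $\tfrac{n^2}{4}-\gamma^2\notin\sigma_{pp}(-\Delta_{g_+})$ and the Sobolev trace theory of the excerpt's Appendix are used), together with handling the borderline case $\lambda_1(-\Delta_{g_+})=\tfrac{n^2}{4}-\gamma^2$ carefully — there the form is nonnegative but one must still exhibit an unbounded-below sequence for $\mE_{2\gamma}$, which is possible exactly because $\tfrac{n^2}{4}-\gamma^2$ is not an eigenvalue so no minimizer of the homogeneous problem exists.
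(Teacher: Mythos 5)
Your approach (conformal covariance to reduce to spectral theory of $-\Delta_{g_+}$) matches the paper's in spirit, but there are three concrete issues.

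First, the conformal substitution has the exponent inverted: the function on $g_+$ associated to $U\in\mC_f^\gamma$ is $u = r^{\frac{n-2\gamma}{2}}U$, not $v = r^{-\frac{n-2\gamma}{2}}U$; with your $v$, the function blows up like $r^{-\frac{n-2\gamma}{2}}f$ at the boundary. (Compare $U = r^{-\frac{n-2\gamma}{2}}\mP\left(\frac{n}{2}+\gamma\right)f$ in the proof of Proposition~\ref{prop:extension}.) Second, and more substantively, even with the sign corrected, the proposed identity writing $\mE_{2\gamma}(U)$ as $\int_X\left(\lv\nabla u\rv^2_{g_+} - \left(\frac{n^2}{4}-\gamma^2\right)u^2\right)\dvol_{g_+}$ plus a boundary term has individually divergent integrals whenever $f\not\equiv0$: $u\approx r^{\frac{n-2\gamma}{2}}f$ near $M$, so $\int u^2\dvol_{g_+}\sim\int r^{n-2\gamma}r^{-n-1}\,dr$ diverges. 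Making this rigorous would require a renormalization. The paper avoids the issue entirely by fixing one reference $U_0\in\mC_f^\gamma$, writing $\mC_f^\gamma = U_0 + \mC_0^\gamma$, and applying the conformal transformation only to the $\mC_0^\gamma$ piece, whose $g_+$-counterpart genuinely lies in $L^2(g_+)$; it then expands $\mE_{2\gamma}(U_0+tV)$ as a quadratic in $t$ and uses $\lambda_1(L_{2,\phi}^m) = \inf\{\mE_{2\gamma}(V) : V\in\mC_0^\gamma,\ \int V^2 = 1\}$ together with the factorization $(L_{2,\phi}^m)_{g_+} = -\Delta_{g_+}-\frac{n^2}{4}+\gamma^2$, so every term that appears is finite. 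Third, your case analysis at the borderline $\lambda_1(-\Delta_{g_+}) = \frac{n^2}{4}-\gamma^2$ is both unnecessary and, as written, not conclusive (Cauchy--Schwarz bounds the cross term from above but does not force the energy to $-\infty$). On a Poincar\'e--Einstein manifold the essential spectrum of $-\Delta_{g_+}$ is $[\frac{n^2}{4},\infty)$, so if the bottom of the spectrum equals $\frac{n^2}{4}-\gamma^2<\frac{n^2}{4}$ then it is an $L^2$-eigenvalue, contradicting $\frac{n^2}{4}-\gamma^2\notin\sigma_{pp}(-\Delta_{g_+})$. Thus failure of~\eqref{eqn:spectrum} means strict inequality, $\lambda_1(L_{2,\phi}^m)<0$, and there is an honest $V\in\mC_0^\gamma$ with $\mE_{2\gamma}(V)<0$; letting $t\to\infty$ in $\mE_{2\gamma}(U_0+tV)$ finishes the ``only if'' direction without any borderline casework.
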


\begin{proof}
 Fix $U\in\mC_f^\gamma$, so that $\mC_f^\gamma=U+\mC_0^\gamma$.  By definition,
 \[ \lambda_1\left(L_{2,\phi}^m\right) = \inf \left\{ \mE_{2\gamma}(V) \suchthat V \in \mC_0^\gamma, \int_X V^2=1 \right\} . \]
 Since $(L_{2,\phi}^m)_+=-\Delta_{g_+}-\frac{n^2}{4}+\gamma^2$, we have that $\lambda_1(L_{2,\phi}^m)>0$ if and only if~\eqref{eqn:spectrum} holds.
 
 Next, given any $V\in\mC_0^\gamma$ and $t\in\bR$, we compute that
 \begin{equation}
  \label{eqn:basic_energy_estimate0}
  \mE_{2\gamma}(U+tV) = t^2\mE_{2\gamma}(V) + 2t\mQ_{2\gamma}(U,V) + \mE_{2\gamma}(U) .
 \end{equation}
 If~\eqref{eqn:spectrum} does not hold, then there is a $V\in\mC_0^\gamma$ such that $\mE_{2\gamma}(V)<0$.  In particular, inserting this $V$ into~\eqref{eqn:basic_energy_estimate0} and letting $t\to\infty$ shows that~\eqref{eqn:finite_energy} does not hold.  If~\eqref{eqn:spectrum} holds, then Theorem~\ref{thm:robin} and~\eqref{eqn:basic_energy_estimate0} imply that
 \[ \mE_{2\gamma}(U+V) \geq \lambda_1\left(L_{2,\phi}^m\right)\int_X V^2 + 2\left(\int_X \left(L_{2,\phi}^mU\right)^2\right)^{\frac{1}{2}}\left(\int_X V^2\right)^{\frac{1}{2}} + \mE_{2\gamma}(U) \]
 for any $V\in\mC_0^\gamma$.  An application of the Cauchy--Schwarz inequality yields a lower bound for $\mE_{2\gamma}(U+V)$ which depends only on $U$.  In particular, \eqref{eqn:finite_energy} holds.
\end{proof}

We next implement the minimization scheme described at the beginning of this section to prove Theorem~\ref{thm:intro_inequality}.  For convenience, we restate the result here.

\begin{thm}
\label{thm:inequality}
 Fix $\gamma\in(0,1)$ and set $m=1-2\gamma$.  Let $(X^{n+1},M^n,g_+)$ be a Poincar\'e--Einstein manifold which satisfies~\eqref{eqn:spectrum}.  Let $\rho$ be a $\gamma$-admissible defining function with expansion~\eqref{eqn:gamma_admissible} and consider $(\oX,\rho^2g_+,\rho,m,1)$.  For any $f\in C^\infty(M)$, it holds that
 \begin{multline}
  \label{eqn:inequality}
  \int_X \left(\lv\nabla U\rv^2 + \frac{m+n-1}{2}J_\phi^m U^2\right) \rho^m\,\dvol \\ \geq -\frac{2\gamma}{d_\gamma}\left[ \oint_M f\,P_{2\gamma}f\,\dvol - \frac{n-2\gamma}{2}d_\gamma\oint_M\Phi f^2\dvol \right] .
 \end{multline}
 for all $U\in W^{1,2}(\oX,\rho^m\dvol)$ with $\Tr U=f$.  Moreover, equality holds if and only if $L_{2,\phi}^mU=0$.
\end{thm}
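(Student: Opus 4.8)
The plan is to deduce Theorem~\ref{thm:inequality} from the existence of a canonical extension $U_0$ solving~\eqref{eqn:bvp0}, the integration-by-parts identity of Theorem~\ref{thm:robin}, and the positivity of $\mE_{2\gamma}$ on $\mC_0^\gamma$ that is equivalent to~\eqref{eqn:spectrum}. First I would reformulate the claimed inequality in terms of the energy~\eqref{eqn:mE0}: since $m+n-1=n-2\gamma$ and $H_{2\gamma}=-2n\gamma\Phi$, formula~\eqref{eqn:mE0} gives, for every $U\in\mC_f^\gamma$,
\[ \mE_{2\gamma}(U)=\int_X\left(\lv\nabla U\rv^2+\frac{m+n-1}{2}J_\phi^mU^2\right)\rho^m\dvol-\gamma(n-2\gamma)\oint_M\Phi f^2\dvol , \]
while the right-hand side of~\eqref{eqn:inequality} equals $-\frac{2\gamma}{d_\gamma}\oint_M fP_{2\gamma}f+\gamma(n-2\gamma)\oint_M\Phi f^2$. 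Hence~\eqref{eqn:inequality} is equivalent to $\mE_{2\gamma}(U)\geq-\frac{2\gamma}{d_\gamma}\oint_M fP_{2\gamma}f$, with equality precisely when $L_{2,\phi}^mU=0$, and it suffices to prove this reformulated statement.

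Next I would exhibit the extremizer. Since~\eqref{eqn:spectrum} implies $\frac{n^2}{4}-\gamma^2\notin\sigma_{pp}(-\Delta_{g_+})$, conformal covariance lets us pass to a geodesic defining function $r$, for which $U_0:=r^{-\frac{n-2\gamma}{2}}\mP\left(\frac{n}{2}+\gamma\right)f$ is, as in the proof of Proposition~\ref{prop:extension}, the unique solution of~\eqref{eqn:bvp0}; it lies in $\mC_f^\gamma\subset W^{1,2}(\oX,\rho^m\dvol)$. Applying the identity~\eqref{eqn:robin_ibp} with $U=V=U_0$ and using $L_{2,\phi}^mU_0=0$ together with $B_0^{2\gamma}U_0=f$ yields $\mE_{2\gamma}(U_0)=\oint_M fB_{2\gamma}^{2\gamma}U_0$, while Proposition~\ref{prop:extension} gives $B_{2\gamma}^{2\gamma}U_0=-\frac{2\gamma}{d_\gamma}P_{2\gamma}f$. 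Therefore $\mE_{2\gamma}(U_0)=-\frac{2\gamma}{d_\gamma}\oint_M fP_{2\gamma}f$, the conjectured extremal value.

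Then I would compare an arbitrary competitor with $U_0$. For $U\in\mC_f^\gamma$ write $V:=U-U_0\in\mC_0^\gamma$; bilinearity of $\mQ_{2\gamma}$ gives
\[ \mE_{2\gamma}(U)=\mE_{2\gamma}(U_0)+2\mQ_{2\gamma}(U_0,V)+\mE_{2\gamma}(V) . \]
The cross term vanishes: by~\eqref{eqn:robin_ibp}, $\mQ_{2\gamma}(U_0,V)=\int_X VL_{2,\phi}^mU_0+\oint_M B_0^{2\gamma}(V)B_{2\gamma}^{2\gamma}(U_0)=0$, since $L_{2,\phi}^mU_0=0$ and $B_0^{2\gamma}(V)=V\rv_M=0$. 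Because $(L_{2,\phi}^m)_+=-\Delta_{g_+}-\frac{n^2}{4}+\gamma^2$, the hypothesis~\eqref{eqn:spectrum} says exactly that $\lambda_1(L_{2,\phi}^m)>0$ (cf.\ the proof of Proposition~\ref{prop:bounded_below}), whence $\mE_{2\gamma}(V)\geq\lambda_1(L_{2,\phi}^m)\int_X V^2\geq0$ with equality only when $V=0$. Thus $\mE_{2\gamma}(U)\geq\mE_{2\gamma}(U_0)$ for every $U\in\mC_f^\gamma$, with equality if and only if $U=U_0$, equivalently $L_{2,\phi}^mU=0$. The extension to all $U\in W^{1,2}(\oX,\rho^m\dvol)$ with $\Tr U=f$ follows from density of $\mC_f^\gamma$ in the fiber $\{\,U:\Tr U=f\,\}$, boundedness of $\mQ_{2\gamma}$ and $\mE_{2\gamma}$ on $W^{1,2}(\oX,\rho^m\dvol)$, and the fact that the right-hand side depends only on $f=\Tr U$; in the equality discussion one writes $\mE_{2\gamma}(U)=\mE_{2\gamma}(U_0)+\mE_{2\gamma}(U-U_0)$ with $U-U_0\in W_0^{1,2}(\oX,\rho^m\dvol)$.

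The \emph{main obstacle} is the functional-analytic bookkeeping rather than any new geometric input: one must check that the canonical extension $U_0$ genuinely belongs to $W^{1,2}(\oX,\rho^m\dvol)$ --- which uses the asymptotic expansion~\eqref{eqn:Fexpansion2} of $\mP\left(\frac{n}{2}+\gamma\right)f$ to control $\rho^m\lv\nabla U_0\rv^2$ near $M$ --- that $\mC_f^\gamma$ is dense in $\{\,U\in W^{1,2}(\oX,\rho^m\dvol):\Tr U=f\,\}$, and that the vanishing of the cross term $\mQ_{2\gamma}(U_0,V)$ persists for $V\in W_0^{1,2}(\oX,\rho^m\dvol)$; all of these rest on the weighted Sobolev trace theory of Appendix~\ref{sec:appendix}. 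The remaining steps are short computations using results already established.
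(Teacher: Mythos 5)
Your proof is correct and relies on precisely the same ingredients as the paper's --- the integration-by-parts identity of Theorem~\ref{thm:robin}, the identification $B_{2\gamma}^{2\gamma}U_0=-\frac{2\gamma}{d_\gamma}P_{2\gamma}f$ from Proposition~\ref{prop:extension}, and the positivity of $\mE_{2\gamma}$ on $\mC_0^\gamma$ coming from the spectral hypothesis via Proposition~\ref{prop:bounded_below} --- but you assemble them slightly differently. The paper's proof invokes the direct method: it cites Proposition~\ref{prop:bounded_below} for boundedness below, takes a minimizer $U$ in $W^{1,2}(\oX,\rho^m\dvol)$, asserts that this minimizer solves~\eqref{eqn:bvp0}, and then evaluates $\mE_{2\gamma}(U)$. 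You instead exhibit the candidate extremizer $U_0=r^{-(n-2\gamma)/2}\mP(\tfrac n2+\gamma)f$ explicitly, compute its energy, and then complete the square, writing $\mE_{2\gamma}(U)=\mE_{2\gamma}(U_0)+\mE_{2\gamma}(U-U_0)$ after noting that the cross term $\mQ_{2\gamma}(U_0,U-U_0)$ vanishes since $L_{2,\phi}^mU_0=0$ and $B_0^{2\gamma}(U-U_0)=0$. Your version is somewhat more self-contained: it sidesteps the existence of an abstract minimizer (which in the paper is implicit and would require weak lower semicontinuity and compactness arguments) and directly identifies the equality case. The density step you flag as the "main obstacle" is indeed the only point requiring care beyond what the paper records, and your treatment of it via the trace theory of Appendix~\ref{sec:appendix} is appropriate; note also that the vanishing of the cross term for $V\in W^{1,2}_0$ follows from the boundedness of $\mQ_{2\gamma}$ and its vanishing on the dense subspace $\mC_0^\gamma$, just as you indicate.
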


\begin{proof}
 From~\eqref{eqn:mE0} and Proposition~\ref{prop:bounded_below} we observe that the left-hand side of~\eqref{eqn:inequality} is uniformly bounded below in $\mC_f^\gamma$.  Let $U\in W^{1,2}(\oX,\rho^m\dvol)$ be a minimizer.  Necessarily $U$ solves~\eqref{eqn:bvp0}.  Theorem~\ref{thm:robin} and Proposition~\ref{prop:extension} then imply that
 \[ \mE_{2\gamma}(U) = -\frac{2\gamma}{d_\gamma}\oint_M f\,P_{2\gamma}f, \]
 from which~\eqref{eqn:inequality} immediately follows.
\end{proof}

\subsection{The case $\gamma\in(1,2)$}
\label{subsec:inequality/2}

We again start by finding a necessary and sufficient condition for the energy $\mE_{2\gamma}$ to be uniformly bounded below on $\mC_{f,\psi}^\gamma$.

\begin{prop}
\label{prop:bounded_below2}
 Fix $\gamma\in(1,2)$ and set $m=3-2\gamma$.  Let $(X^{n+1},M^n,g_+)$ be a Poincar\'e--Einstein manifold such that $\frac{n^2}{4}-\gamma^2,\frac{n^2}{4}-(2-\gamma)^2\not\in\sigma_{pp}(-\Delta_{g_+})$.  Let $\rho$ be a $\gamma$-admissible defining function and consider $(\oX,\rho^2g_+,\rho,m,1)$.  Fix $f,\psi\in C^\infty(M)$.  Then
 \begin{equation}
  \label{eqn:finite_energy2}
  \inf_{U\in\mC^\gamma_{f,\psi}} \mE_\gamma(U) > -\infty
 \end{equation}
 if and only if
 \begin{equation}
  \label{eqn:spectrum2}
  \lambda_1\left(L_{4,\phi}^m\right) > 0 .
 \end{equation}
 Moreover, if $\lambda_1(-\Delta_{g_+})>\frac{n^2}{4}-(2-\gamma)^2$, then~\eqref{eqn:spectrum2} holds.
\end{prop}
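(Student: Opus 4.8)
Follow the strategy of Proposition~\ref{prop:bounded_below}, with the single spectral shift there replaced by the factorization~\eqref{eqn:paneitz_factorization}. Write $c_1=\tfrac{n^2}{4}-\gamma^2$ and $c_2=\tfrac{n^2}{4}-(2-\gamma)^2$; since $\gamma\in(1,2)$ we have $c_1<c_2<\tfrac{n^2}{4}$, and because the essential spectrum of $-\Delta_{g_+}$ on a Poincar\'e--Einstein manifold is $[\tfrac{n^2}{4},\infty)$, the hypothesis $c_1,c_2\notin\sigma_{pp}(-\Delta_{g_+})$ improves to $c_1,c_2\notin\sigma(-\Delta_{g_+})$. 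Fix $U_0\in\mC^\gamma_{f,\psi}$, so $\mC^\gamma_{f,\psi}=U_0+\mC^\gamma_{0,0}$. By~\eqref{eqn:eval0} and~\eqref{eqn:eval1} every $V\in\mC^\gamma_{0,0}$ satisfies $B_0^{2\gamma}V=0=B_{2\gamma-2}^{2\gamma}V$, so Theorem~\ref{thm:integral} gives $\mE_{2\gamma}(V)=\mQ_{2\gamma}(V,V)=\int_X V\,L_{4,\phi}^mV\,\rho^m\dvol_g$, whence $\lambda_1(L_{4,\phi}^m)=\inf\{\mE_{2\gamma}(V):V\in\mC^\gamma_{0,0},\ \int_X V^2\rho^m\dvol_g=1\}$.

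The first point I would establish is that $\lambda_1(L_{4,\phi}^m)\neq0$. Applying the conformal covariance~\eqref{eqn:conformally_covariant4} with $e^\sigma=\rho$ (so the $g_+$-side smooth metric measure space is $(\oX,g_+,1,m,1)$) and using $\dvol_g=\rho^{n+1}\dvol_{g_+}$, the interior integral becomes $\mE_{2\gamma}(V)=\int_X \tilde V\,(L_{4,\phi}^m)_{g_+}\tilde V\,\dvol_{g_+}$ with $\tilde V:=\rho^{\frac{n-2\gamma}{2}}V$, and by~\eqref{eqn:paneitz_factorization} the operator $(L_{4,\phi}^m)_{g_+}$ equals $(-\Delta_{g_+}-c_2)(-\Delta_{g_+}-c_1)$. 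By the spectral theorem $\sigma\bigl((L_{4,\phi}^m)_{g_+}\bigr)=\{(\lambda-c_1)(\lambda-c_2):\lambda\in\sigma(-\Delta_{g_+})\}$, and since $\sigma(-\Delta_{g_+})$ is closed and misses both $c_1$ and $c_2$, this set is bounded away from $0$; hence either $(L_{4,\phi}^m)_{g_+}\geq c\,\Id$ for some $c>0$ or $(L_{4,\phi}^m)_{g_+}$ has a negative spectral value. In the first case, $\int_X V^2\rho^m\dvol_g=\int_X\tilde V^2\rho^{4}\dvol_{g_+}\leq(\max_{\oX}\rho)^{4}\|\tilde V\|_{L^2(g_+)}^2$ yields $\mE_{2\gamma}(V)\geq c(\max_{\oX}\rho)^{-4}\int_X V^2\rho^m\dvol_g$, so $\lambda_1(L_{4,\phi}^m)>0$. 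In the second case there is $w\in C_c^\infty(X)$ with $\int_X w\,(L_{4,\phi}^m)_{g_+}w\,\dvol_{g_+}<0$, and then $V:=\rho^{-\frac{n-2\gamma}{2}}w\in\mC^\gamma_{0,0}$ satisfies $\mE_{2\gamma}(V)<0$, so $\lambda_1(L_{4,\phi}^m)<0$. Either way $\lambda_1(L_{4,\phi}^m)\neq0$.

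For the \emph{moreover} statement, if $\lambda_1(-\Delta_{g_+})>c_2>c_1$ then $-\Delta_{g_+}-c_1$ and $-\Delta_{g_+}-c_2$ are commuting positive operators with lower bounds $\lambda_1(-\Delta_{g_+})-c_i>0$, so $(L_{4,\phi}^m)_{g_+}\geq\bigl(\lambda_1(-\Delta_{g_+})-c_1\bigr)\bigl(\lambda_1(-\Delta_{g_+})-c_2\bigr)\Id$, and the estimate above gives $\lambda_1(L_{4,\phi}^m)>0$, i.e.~\eqref{eqn:spectrum2}. The equivalence then follows as in Proposition~\ref{prop:bounded_below}. If~\eqref{eqn:spectrum2} holds, choose $U_0$ to be the canonical extension of Corollary~\ref{cor:eval} with $L_{4,\phi}^mU_0=0$ (available since $c_1,c_2\notin\sigma_{pp}(-\Delta_{g_+})$); then $\mQ_{2\gamma}(U_0,V)=0$ for all $V\in\mC^\gamma_{0,0}$ by Theorem~\ref{thm:integral}, so $\mE_{2\gamma}(U_0+V)=\mE_{2\gamma}(U_0)+\mE_{2\gamma}(V)\geq\mE_{2\gamma}(U_0)>-\infty$, using $\mE_{2\gamma}(V)\geq\lambda_1(L_{4,\phi}^m)\int_XV^2\rho^m\dvol_g\geq0$ and the finiteness of $\mE_{2\gamma}(U_0)=\mQ_{2\gamma}(U_0,U_0)$; this is~\eqref{eqn:finite_energy2}. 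Conversely, if~\eqref{eqn:spectrum2} fails then $\lambda_1(L_{4,\phi}^m)<0$ by the previous paragraph, so there is $V_0\in\mC^\gamma_{0,0}$ with $\mE_{2\gamma}(V_0)<0$, and $\mE_{2\gamma}(U_0+tV_0)=t^2\mE_{2\gamma}(V_0)+2t\,\mQ_{2\gamma}(U_0,V_0)+\mE_{2\gamma}(U_0)\to-\infty$ as $t\to\infty$, so~\eqref{eqn:finite_energy2} fails.

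The main obstacle I anticipate is making the spectral identification of the second paragraph rigorous: one must check that $V\mapsto\tilde V=\rho^{\frac{n-2\gamma}{2}}V$ carries $\mC^\gamma_{0,0}$ into the domain of the self-adjoint operator $(L_{4,\phi}^m)_{g_+}$ on $L^2(\dvol_{g_+})$ — so that $\mE_{2\gamma}(V)$ really is the spectral pairing $\lp(L_{4,\phi}^m)_{g_+}\tilde V,\tilde V\rp_{g_+}$ to which the lower bound from positivity applies — that $\rho^{-\frac{n-2\gamma}{2}}C_c^\infty(X)\subset\mC^\gamma_{0,0}$, and that the various pieces of $\mE_{2\gamma}$ and $\mQ_{2\gamma}$ converge on $\mC^\gamma$. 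All of these rest on the asymptotic expansions~\eqref{eqn:mC1f_asympt} and the limited $C^{n-1,\alpha}$ regularity of $g=\rho^2g_+$ at $M$, and so require the kind of boundary analysis developed in Sections~\ref{sec:boundary}--\ref{sec:asymptotics}.
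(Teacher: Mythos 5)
Your argument is correct and follows the paper's general strategy — characterize boundedness below of $\mE_{2\gamma}$ on $\mC_{f,\psi}^\gamma$ via the sign of $\lambda_1(L_{4,\phi}^m)$ and expand $\mE_{2\gamma}(U_0+tV)$ quadratically in $t$ — but you fill in a genuine gap that the paper elides. The paper simply says to argue as in Proposition~\ref{prop:bounded_below}, and that argument needs the dichotomy $\lambda_1(L_{4,\phi}^m)>0$ or $\lambda_1(L_{4,\phi}^m)<0$: if $\lambda_1=0$, neither branch yields a conclusion (there need not be a $V\in\mC^\gamma_{0,0}$ with $\mE_{2\gamma}(V)<0$, and the Cauchy--Schwarz bound is no longer coercive). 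You close this by passing to $(L_{4,\phi}^m)_{g_+}=(-\Delta_{g_+}-c_2)(-\Delta_{g_+}-c_1)$ via~\eqref{eqn:conformally_covariant4} and invoking the spectral mapping theorem together with the standard spectral structure of $-\Delta_{g_+}$ on a Poincar\'e--Einstein manifold (essential spectrum $[\tfrac{n^2}{4},\infty)$, discrete below), so that $c_1,c_2\notin\sigma_{pp}$ forces $c_1,c_2\notin\sigma$ and hence $0\notin\sigma((L_{4,\phi}^m)_{g_+})$. You also correctly flag that $\lambda_1(L_{4,\phi}^m)$ (computed against $\rho^m\dvol_g$) and the bottom of $\sigma((L_{4,\phi}^m)_{g_+})$ (on $L^2(\dvol_{g_+})$) differ by a bounded weight $\rho^4$, so only their signs agree — which is all the argument needs. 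Two minor stylistic departures: you take $U_0$ to be the canonical extension with $L_{4,\phi}^mU_0=0$ from Corollary~\ref{cor:eval}, which kills the cross term $\mQ_{2\gamma}(U_0,V)$ outright, whereas the paper's template controls it by Cauchy--Schwarz for an arbitrary $U_0$; and your \emph{moreover} argument reproduces the paper's one-line deduction from the factorization~\eqref{eqn:paneitz_factorization}. Both are sound.
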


\begin{proof}
 Arguing as in the proof of Proposition~\ref{prop:bounded_below}, but using Theorem~\ref{thm:integral} instead of Theorem~\ref{thm:robin}, yields the equivalence of~\eqref{eqn:finite_energy2} and~\eqref{eqn:spectrum2}.
 
 Suppose now that $\lambda_1(-\Delta_{g_+})>\frac{n^2}{4}-(2-\gamma)^2$.  It follows immediately from~\eqref{eqn:paneitz_factorization} that~\eqref{eqn:spectrum2} holds.
\end{proof}

We next implement the minimization scheme to derive the following improvement of Theorem~\ref{thm:intro_inequality2}.

\begin{thm}
\label{thm:inequality2}
 Fix $\gamma\in(1,2)$ and set $m=3-2\gamma$.  Let $(X^{n+1},M^n,g_+)$ be a Poincar\'e--Einstein manifold which satisfies~\eqref{eqn:spectrum2}.  Let $\rho$ be a $\gamma$-admissible defining function with expansion~\eqref{eqn:gamma_admissible} and consider $(\oX,\rho^2g_+,\rho,m,1)$.  For any $f,\psi\in C^\infty(M)$, it holds that
 \begin{equation}
  \label{eqn:inequality2}
  \begin{split}
   & \int_X\left(\left(\Delta_\phi U\right)^2 - \left(4P-(n-2\gamma+2)J_\phi^mg\right)(\nabla U,\nabla U) + \frac{n-2\gamma}{2}Q_\phi^m U^2\right) \\
   & \quad \geq \frac{8\gamma(\gamma-1)}{d_\gamma}\left(\oint_M f\,P_{2\gamma}f - \frac{n-2\gamma}{2}d_\gamma\oint_M \Phi f^2\right) \\
   & \qquad + \frac{d_\gamma}{2\gamma(\gamma-1)}\oint_M \psi\,P_{4-2\gamma}\psi \\
   & \qquad + 4\oint_M \left( \lp\onabla f,\onabla\psi\rp + \frac{(n-2\gamma)(2-\gamma)}{2}\left(\oJ+4(\gamma-1)\rho_2\right)f\psi\right)
  \end{split}
 \end{equation}
 for all $U\in W^{2,2}(\oX,\rho^m\dvol)$ with $\Tr U=(f,\psi)$.  Moreover, equality holds if and only if $L_{4,\phi}^mU=0$.
\end{thm}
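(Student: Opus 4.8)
The plan is to follow the minimization scheme sketched at the start of this section; this is the $\gamma\in(1,2)$ analogue of the proof of Theorem~\ref{thm:inequality}.

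\emph{Reduction.}  Fix $U_0\in\mC^\gamma_{f,\psi}$.  Since $\mC^\gamma_{0,0}$ is dense in $W^{2,2}_0(\oX,\rho^m\dvol)$ and the left-hand side of~\eqref{eqn:inequality2} is continuous in the $W^{2,2}$-norm, it suffices to prove~\eqref{eqn:inequality2} for $U\in\mC^\gamma_{f,\psi}$.  For such $U$, formula~\eqref{eqn:mE1} expresses the left-hand side of~\eqref{eqn:inequality2} as $\mE_{2\gamma}(U)$ plus those boundary terms on the right-hand side of~\eqref{eqn:inequality2} that do not involve $P_{2\gamma}f$ or $P_{4-2\gamma}\psi$; checking this requires $B^{2\gamma}_{2\gamma-2}U=2(1-\gamma)\psi$ (cf.\ \eqref{eqn:eval1}) together with the evaluations
\[ T^{2\gamma}_2 = \frac{2-\gamma}{\gamma-1}\oJ + 4(2-\gamma)\rho_2 , \qquad \lim_{\rho\to0}\rho^m\eta J^m_\phi = 8\gamma(\gamma-1)\Phi , \]
which I would obtain from~\eqref{eqn:J_to_weight}, \eqref{eqn:J_to_grad}, Lemma~\ref{lem:eval}, the Gauss--Codazzi relations~\eqref{eqn:gauss_codazzi}, and the expansions~\eqref{eqn:geodesic_metric}, \eqref{eqn:gamma_admissible}.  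Thus~\eqref{eqn:inequality2} is equivalent to
\[ \mE_{2\gamma}(U) \geq \frac{8\gamma(\gamma-1)}{d_\gamma}\oint_M f\,P_{2\gamma}f + \frac{d_\gamma}{2\gamma(\gamma-1)}\oint_M\psi\,P_{4-2\gamma}\psi , \]
and by Proposition~\ref{prop:bounded_below2} the left-hand side is bounded below on $\mC^\gamma_{f,\psi}$ precisely because~\eqref{eqn:spectrum2} holds.

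\emph{Minimization and the value of the minimum.}  Writing $U=U_0+V$ with $V\in\mC^\gamma_{0,0}$ gives $\mE_{2\gamma}(U)=\mE_{2\gamma}(V)+2\mQ_{2\gamma}(U_0,V)+\mE_{2\gamma}(U_0)$; the hypothesis $\lambda_1(L^m_{4,\phi})>0$, the weighted Bochner formula, and the Sobolev trace estimates of Appendix~\ref{sec:appendix} together make $V\mapsto\mE_{2\gamma}(V)$ coercive on $W^{2,2}_0(\oX,\rho^m\dvol)$, so the direct method produces a unique minimizer $U\in W^{2,2}(\oX,\rho^m\dvol)$ with $\Tr U=(f,\psi)$.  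Its Euler--Lagrange condition $\mQ_{2\gamma}(U,\cdot)=0$ on $\mC^\gamma_{0,0}$ forces $L^m_{4,\phi}U=0$ by~\eqref{eqn:ibp}; by uniqueness of solutions of the boundary value problem~\eqref{eqn:bvp_mixed}~\cite{CaseChang2013}, $U=\rho^{-\frac{n-2\gamma}{2}}(u_1+u_2)$ with $u_1=\mP(\tfrac n2+\gamma)f$ and $u_2=\mP(\tfrac n2+2-\gamma)\psi$, so $U\in\mC^\gamma_{f,\psi}$ and Corollary~\ref{cor:eval} applies.  Taking $V=U$ in~\eqref{eqn:ibp}, using $L^m_{4,\phi}U=0$, and inserting the values of $B^{2\gamma}_0U$, $B^{2\gamma}_{2\gamma-2}U$, $B^{2\gamma}_2U$, $B^{2\gamma}_{2\gamma}U$ from Corollary~\ref{cor:eval} yields
\[ \mE_{2\gamma}(U) = \frac{8\gamma(\gamma-1)}{d_\gamma}\oint_M f\,P_{2\gamma}f + \frac{8(1-\gamma)(2-\gamma)}{d_{2-\gamma}}\oint_M\psi\,P_{4-2\gamma}\psi . \]

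\emph{Conclusion.}  The functional equation $\Gamma(z+1)=z\Gamma(z)$ applied to $d_\gamma=2^{2\gamma}\Gamma(\gamma)/\Gamma(-\gamma)$ gives $d_\gamma d_{2-\gamma}=16\gamma(\gamma-1)^2(\gamma-2)$, hence $\frac{8(1-\gamma)(2-\gamma)}{d_{2-\gamma}}=\frac{d_\gamma}{2\gamma(\gamma-1)}$; so the minimum of $\mE_{2\gamma}$ over $\mC^\gamma_{f,\psi}$ is exactly the right-hand side of the equivalent inequality from the Reduction step, which proves~\eqref{eqn:inequality2}.  Strict convexity of $\mE_{2\gamma}$ along $\mC^\gamma_{0,0}$ (again from coercivity) shows equality in~\eqref{eqn:inequality2} holds iff $U$ is the minimizer, i.e.\ iff $\mQ_{2\gamma}(U,\cdot)=0$ on $\mC^\gamma_{0,0}$, i.e.\ iff $L^m_{4,\phi}U=0$.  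I expect the principal difficulties to be the bookkeeping in the Reduction step --- establishing the two limit formulae and the $\Gamma$-function identity so that the explicit constants in~\eqref{eqn:inequality2} fall out --- and the regularity argument needed to place the minimizer in $\mC^\gamma_{f,\psi}$ so that Corollary~\ref{cor:eval} may be invoked.
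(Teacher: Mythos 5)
Your proof is correct and follows essentially the same minimization argument as the paper: bound $\mE_{2\gamma}$ below on $\mC^\gamma_{f,\psi}$ via Proposition~\ref{prop:bounded_below2}, identify the minimizer as the scattering solution via Theorem~\ref{thm:integral} and Corollary~\ref{cor:eval}, and simplify the constants using $d_\gamma d_{2-\gamma}=16\gamma(\gamma-1)^2(\gamma-2)$. One small slip: the boundary value problem characterizing the minimizer prescribes $B^{2\gamma}_{2\gamma-2}U=2(1-\gamma)\psi$ rather than $B^{2\gamma}_{2\gamma-2}U=0$, so the relevant uniqueness statement is the one for~\eqref{eqn:bvp_surprise}, not~\eqref{eqn:bvp_mixed}; your explicit identification $U=\rho^{-\frac{n-2\gamma}{2}}(u_1+u_2)$ is nonetheless the right one, and the additional bookkeeping you supply (the evaluations of $T^{2\gamma}_2$ and $\lim\rho^m\eta J^m_\phi$ for a general $\gamma$-admissible $\rho$) correctly spells out what the paper leaves implicit.
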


\begin{proof}
 From~\eqref{eqn:eval1}, \eqref{eqn:mE1} and Proposition~\ref{prop:bounded_below2}, we observe that the left-hand side of~\eqref{eqn:inequality2} is uniformly bounded below in $\mC_{f,\psi}^\gamma$.  Let $U\in W^{2,2}(\oX,\rho^m\dvol)$ be a minimizer.  Necessarily $U$ solves~\eqref{eqn:bvp_surprise}.  Theorem~\ref{thm:integral} and Corollary~\ref{cor:eval} then imply that
 \[ \mE_{2\gamma}(U) = \frac{8\gamma(\gamma-1)}{d_\gamma}\oint_M f\,P_{2\gamma}f + 8(1-\gamma)(2-\gamma)d_{2-\gamma}^{-1}\oint_M \psi\,P_{4-2\gamma}\psi . \]
 The conclusion follows from~\eqref{eqn:fractional_gjms}.
\end{proof}

\section{A sharp Sobolev inequality}
\label{sec:sobolev}

As an application of Theorem~\ref{thm:inequality2}, we prove the following sharp Sobolev trace inequality for $\gamma$-admissible compactifications of hyperbolic space with $\gamma\in(1,2)$.  This statement is more general than Theorem~\ref{thm:sobolev} in that it involves the full trace on $\mC^\gamma$ and it allows for arbitrary $\gamma$-admissible compactifications.

\begin{thm}
 \label{thm:sobolev_genl}
 Fix $\gamma\in(1,2)$ and set $m=3-2\gamma$.  Choose $n\in\bN$ such that $n>2\gamma$ and let $\rho$ be a $\gamma$-admissible defining function on hyperbolic space $(H^{n+1},S^n,g_+)$.  Then, in terms of $(\overline{H},\rho^2g_+,\rho,m,1)$,
 \[ \mE_{2\gamma}(U) \geq c_{n,\gamma}^{(2)}\left(\oint_{S^n} \lv f\rv^{\frac{2n}{n-2\gamma}}\right)^{\frac{n-2\gamma}{n}} + \frac{1}{4}c_{n,2-\gamma}^{(2)}\left(\oint_{S^n} \lv \psi\rv^{\frac{2n}{n-4+2\gamma}}\right)^{\frac{n-4+2\gamma}{n}} \]
 for all $U\in\mC^\gamma$, where $f$ is the trace of $U$, $\Phi$ is as in~\eqref{eqn:gamma_admissible}, and
 \[ c_{n,\gamma}^{(2)} = 8\pi^\gamma\frac{\Gamma(2-\gamma)}{\Gamma(\gamma)}\frac{\Gamma\left(\frac{n+2\gamma}{2}\right)}{\Gamma\left(\frac{n-2\gamma}{2}\right)}\left(\frac{\Gamma(n/2)}{\Gamma(n)}\right)^{\frac{2\gamma}{n}} . \]
 Moreover, equality holds if and only if $L_{4,\phi}^mU=0$ and both $f^{\frac{4}{n-2\gamma}}(\rho^2g_+)\rv_{TS^n}$ and $\psi^{\frac{4}{n-4+2\gamma}}(\rho^2g_+)\rv_{TS^n}$ are Einstein with positive scalar curvature.
\end{thm}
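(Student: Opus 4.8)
The plan is to deduce the inequality from the sharp fractional Sobolev inequality on the round sphere, using the identification of $\inf\mE_{2\gamma}$ that is already implicit in the proof of Theorem~\ref{thm:inequality2}. Since hyperbolic space has $\lambda_1(-\Delta_{g_+})=\tfrac{n^2}{4}$, the spectral hypothesis $\lambda_1(-\Delta_{g_+})>\tfrac{n^2}{4}-(2-\gamma)^2$ — equivalently $\lambda_1(L_{4,\phi}^m)>0$ by Proposition~\ref{prop:bounded_below2} — holds automatically. Hence $\mQ_{2\gamma}$ is positive definite on $\mC_{0,0}^\gamma$, and for each $(f,\psi)$ the functional $\mE_{2\gamma}$ is uniformly bounded below on $\mC_{f,\psi}^\gamma$ and its infimum there equals $\mE_{2\gamma}(U_{\min})$, where $U_{\min}\in\mC_{f,\psi}^\gamma$ is the unique element with $L_{4,\phi}^mU_{\min}=0$ (a minimizer in $\mathcal{H}^\gamma$, smooth in the interior by elliptic regularity, solving the boundary value problem~\eqref{eqn:bvp_surprise}). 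Running the computation in the proof of Theorem~\ref{thm:inequality2} (Theorem~\ref{thm:integral} with $V=U_{\min}$, together with Corollary~\ref{cor:eval}) then gives
\[ \inf_{V\in\mC_{f,\psi}^\gamma}\mE_{2\gamma}(V)=\frac{8\gamma(\gamma-1)}{d_\gamma}\oint_{S^n}f\,P_{2\gamma}f+\frac{8(1-\gamma)(2-\gamma)}{d_{2-\gamma}}\oint_{S^n}\psi\,P_{4-2\gamma}\psi, \]
where $P_{2\gamma}$ and $P_{4-2\gamma}$ are the fractional GJMS operators of the conformal infinity $(S^n,h)$ with $h=(\rho^2g_+)\rv_{TS^n}$, a metric in the conformal class of the round sphere. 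Since $U\in\mC_{\Tr U}^\gamma$, this yields $\mE_{2\gamma}(U)\ge\frac{8\gamma(\gamma-1)}{d_\gamma}\oint_{S^n}f\,P_{2\gamma}f+\frac{8(1-\gamma)(2-\gamma)}{d_{2-\gamma}}\oint_{S^n}\psi\,P_{4-2\gamma}\psi$ for every $U\in\mC^\gamma$ with $\Tr U=(f,\psi)$.

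The second step is to estimate each summand from below. For $\gamma\in(1,2)$ a short check gives $d_\gamma>0$ and $d_{2-\gamma}<0$, so both coefficients $\frac{8\gamma(\gamma-1)}{d_\gamma}$ and $\frac{8(1-\gamma)(2-\gamma)}{d_{2-\gamma}}$ are positive, and $P_{2\gamma}$ (of order $2\gamma\in(2,4)$) and $P_{4-2\gamma}$ (of order $4-2\gamma\in(0,2)$) are positive operators on $S^n$; in particular the inequalities below are not reversed. I would then invoke the sharp fractional Sobolev inequality on the sphere~\cite{Beckner1993,CotsiolisTavoularis2004,FrankLieb2012b,Lieb1983}, transported to the representative $h$ by conformal covariance of $P_{2\beta}$ and conformal invariance of the Sobolev quotient: for $\beta\in(0,n/2)$ and $v\in C^\infty(S^n)$,
\[ \oint_{S^n}v\,P_{2\beta}v\ \ge\ \frac{\Gamma(\tfrac n2+\beta)}{\Gamma(\tfrac n2-\beta)}\,\lvert S^n\rvert^{2\beta/n}\Bigl(\oint_{S^n}\lvert v\rvert^{\frac{2n}{n-2\beta}}\Bigr)^{\frac{n-2\beta}{n}}, \]
with equality exactly when $v^{4/(n-2\beta)}h$ is a round metric — equivalently, Einstein with positive scalar curvature, equivalently $v(x)=c(1+a\cdot x)^{-(n-2\beta)/2}$ for some $c\in\bR$ and $\lvert a\rvert<1$. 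Applying this with $\beta=\gamma$ to the $f$-term and $\beta=2-\gamma$ to the $\psi$-term produces the claimed inequality once the constants are matched. The matching is a mechanical computation using $\Gamma(-\beta)=\Gamma(2-\beta)/[\beta(\beta-1)]$ for $\beta\in(1,2)$, the analogous identity for $\beta\in(0,1)$, and the Legendre duplication formula in the form $\lvert S^n\rvert=2^n\pi^{n/2}\Gamma(n/2)/\Gamma(n)$; it identifies the $f$-coefficient with $c_{n,\gamma}^{(2)}$ and the $\psi$-coefficient with $\tfrac14c_{n,2-\gamma}^{(2)}$.

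For the equality statement, bilinearity gives $\mE_{2\gamma}(U)=\mE_{2\gamma}(U_{\min})+\mQ_{2\gamma}(U-U_{\min},U-U_{\min})$ for all $U\in\mC_{f,\psi}^\gamma$, so, $\mQ_{2\gamma}$ being positive definite on $\mC_{0,0}^\gamma$, the energy attains $\inf_{\mC_{f,\psi}^\gamma}\mE_{2\gamma}$ exactly when $U=U_{\min}$, i.e.\ exactly when $L_{4,\phi}^mU=0$; for such $U$ the energy equals the displayed sum of fractional GJMS energies, so equality in the Sobolev bound forces equality in both sharp sphere inequalities. By the classification of extremals this happens precisely when $f^{4/(n-2\gamma)}h$ and $\psi^{4/(n-4+2\gamma)}h$ are each Einstein with positive scalar curvature (here $n>4-2\gamma$, since $2\gamma>2>4-2\gamma$), which, as $h=(\rho^2g_+)\rv_{TS^n}$, is exactly the asserted characterization. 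I anticipate two main difficulties: the constant bookkeeping above, and justifying the existence, interior regularity, and uniqueness of $U_{\min}$ in $\mC_{f,\psi}^\gamma$ as the solution of $L_{4,\phi}^mU=0$ with the natural boundary data — this is where the ellipticity of the boundary value problems of Section~\ref{sec:boundary} and the spectral hypothesis enter. The deepest external input is the classification of extremals of the order-$2\gamma$ fractional Sobolev inequality on $S^n$ for $\gamma\in(1,2)$, which is supplied by the cited references.
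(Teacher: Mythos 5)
Your proposal is correct and follows essentially the same route as the paper: since $\lambda_1(-\Delta_{g_+})=\tfrac{n^2}{4}$ on hyperbolic space, the spectral hypothesis of Theorem~\ref{thm:inequality2} holds, and (as in its proof) $\mE_{2\gamma}(U)\ge\mE_{2\gamma}(U_{\min})$ with $\mE_{2\gamma}(U_{\min})$ identified via Theorem~\ref{thm:integral} and Corollary~\ref{cor:eval} as the sum of the two fractional GJMS energies, after which the sharp fractional Sobolev inequality on $S^n$ for orders $2\gamma$ and $4-2\gamma$ gives the result. Your unpacking — the sign checks on $d_\gamma$ and $d_{2-\gamma}$, the duplication-formula bookkeeping matching the constants to $c_{n,\gamma}^{(2)}$ and $\tfrac14 c_{n,2-\gamma}^{(2)}$, and the equality analysis via positive definiteness of $\mQ_{2\gamma}$ on $\mC_{0,0}^\gamma$ together with the extremal classification — is a faithful and somewhat more explicit account of what the paper leaves implicit, not a different argument.
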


\begin{proof}
 Since $\lambda_1(-\Delta_{g_+})=\frac{n^2}{4}$, Theorem~\ref{thm:inequality2} implies that
 \begin{equation}
  \label{eqn:sobolev_preinequality}
  \mE_{2\gamma}(U) \geq \frac{8\gamma(\gamma-1)}{d_\gamma}\oint_{S^n} f\,P_{2\gamma}f + \frac{d_\gamma}{2\gamma(\gamma-1)}\oint_{S^n} \psi\,P_{4-2\gamma}\psi
 \end{equation}
 for all $U\in\mC^\gamma$, where $f$ and $\psi$ are as in~\eqref{eqn:mC1f_asympt}.  The conformal covariance of the fractional GJMS operators and the sharp fractional Sobolev inequality~\cite{Beckner1993,CotsiolisTavoularis2004,FrankLieb2012b,Lieb1983} together imply that
 \begin{equation}
  \label{eqn:lieb}
  \oint_{S^n}f\,P_{2\gamma}f \geq 2^{2\gamma}\pi^\gamma\frac{\Gamma\left(\frac{n+2\gamma}{2}\right)}{\left(\frac{n-2\gamma}{2}\right)}\left(\frac{\Gamma(n/2)}{\Gamma(n)}\right)^{\frac{2\gamma}{n}}\left(\oint_{S^n} \lv f\rv^{\frac{2n}{n-2\gamma}}\right)^{\frac{n-2\gamma}{n}}
 \end{equation}
 with equality if and only if $f^{\frac{4n}{n-2\gamma}}(\rho^2g_+)\rv_{TS^n}$ is Einstein with positive scalar curvature.  Combining~\eqref{eqn:sobolev_preinequality}, \eqref{eqn:lieb} and the corresponding result for $P_{4-2\gamma}$ yields the desired result.
\end{proof}

\begin{proof}[Proof of Theorem~\ref{thm:sobolev}]
 It is straightforward to check that $(S_+^{n+1},d\theta^2,x_{n+1},m,1)$ satisfies
 \[ L_{4,\phi}^m = \left(-\Delta_\phi + \frac{(m+n)^2-1}{4}\right)\left(-\Delta_\phi + \frac{(m+n)^2-9}{4}\right) . \]
 Since $g_+=x_{n+1}^{-2}d\theta^2$ satisfies $\Ric_{g_+}=-ng_+$, we see find that $x_{n+1}$ is a $\gamma$-admissible defining function for hyperbolic space.  Applying Theorem~\ref{thm:sobolev_genl} then yields the desired conclusion.
\end{proof}

\appendix
\section{Proof of the Sobolev Trace Theorem}
\label{sec:appendix}

The proof of Theorem~\ref{thm:inequality2} requires the Sobolev space $W^{2,2}(\oX^{n+1},\rho^m\dvol)$ and its trace onto $H^\gamma(M)\oplus H^{2-\gamma}(M)$.  Since our definitions of $W^{2,2}(\oX^{n+1},\rho^m\dvol)$ and the trace via the space $\mC^\gamma$ are nonstandard --- the usual approach is via completions of $C^\infty(\oX)$~\cite{Triebel1978} --- we prove the existence of the trace map.

\begin{thm}
 \label{thm:trace}
 Fix $\gamma\in(1,2)$ and set $m=3-2\gamma$.  Let $(\oX^{n+1},g,\rho,m,1)$ be a $\gamma$-admissible smooth metric measure space with nonempty boundary $M=\partial X$.  There is a unique bounded linear operator
 \[ \Tr\colon W^{2,2}(\oX,\rho^m\dvol) \to H^\gamma(M) \oplus H^{2-\gamma}(M) \]
 such that $\Tr(U)=(f,\psi)$ for all $U\in\mC_{f,\psi}^\gamma$.  Moreover, there is a continuous mapping
 \[ E \colon H^\gamma(M) \oplus H^{2-\gamma}(M) \to W^{2,2}(\oX,\rho^m\dvol) \]
 such that $\Tr\circ E$ is the identity map on $H^\gamma(M)\oplus H^{2-\gamma}(M)$.
\end{thm}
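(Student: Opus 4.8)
\emph{Plan and reduction to a model problem.} I would reduce the assertion, by a partition of unity and the normal form supplied by $\gamma$-admissibility, to a model computation on the weighted half-space $\bR^{n+1}_+=\{(x,y):x\in\bR^n,\ y>0\}$ with the Euclidean metric and the weight $y^m$ ($m=3-2\gamma\in(-1,1)$, so $y^m$ is a Muckenhoupt $A_2$ weight), and there run a Fourier analysis in the tangential variable $x$ followed by a one-dimensional weighted ODE analysis. First I would note that neither the class $\mC_{f,\psi}^\gamma$ nor the pair $(f,\psi)$ it records changes when $\rho$ is replaced by the associated geodesic defining function $r$ --- one writes $\rho^{2\gamma-2}=r^{2\gamma-2}(\rho/r)^{2\gamma-2}$ and absorbs the resulting $O(r^2)$ and $O(r^{2\gamma})$ terms into the $\rho^2$- and $\rho^{2\gamma}$-coefficients, using $1<\gamma<2$ --- and that $\rho^m\dvol_g$ and $r^m\dvol_{g_0}$ are boundedly equivalent with ratio tending to $1$ at $M$; so I may assume $\rho=r$ and $g=dr^2+h_r$ with $h_r=h+O(r^2)$. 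The statement is local: away from $M$ the weight is bounded above and below and no boundary term arises, while near $M$ I would fix a finite cover of a collar by boundary charts with a subordinate partition of unity, each cutoff independent of $r$ near $M$, and work in a single chart, i.e.\ with a function on $\bR^{n+1}_+$ compactly supported in $x$ and in $y$. In geodesic normal coordinates the $(rr)$-component of the modified Hessian is exactly $\partial_r^2 U+mr^{-1}\partial_r U$ with no Christoffel correction, and the remaining discrepancy between $h_r$ and the flat metric produces curvature terms of size $O(r)$ applied to $\nabla U$, contributing at most $\varepsilon^2\|U\|_{W^{2,2}}^2$, which is absorbed.

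\emph{The model Fourier--ODE analysis.} Writing $\hat U(\xi,y)$ for the partial Fourier transform in $x$ and $\lambda=|\xi|$, Plancherel identifies $\|U\|_{W^{2,2}}^2$ with $\int_{\bR^n}\mE_\lambda(\hat U(\xi,\cdot))\,d\xi$, where, with $A:=\partial_y^2+\tfrac my\partial_y=y^{-m}\partial_y(y^m\partial_y)$ the operator produced by the modified Hessian in the $(yy)$-direction,
\[
 \mE_\lambda(v)=\int_0^\infty\Bigl(|Av|^2+(2\lambda^2+1)|v'|^2+(\lambda^4+\lambda^2+1)|v|^2\Bigr)y^m\,dy .
\]
For fixed $\lambda$ this is a positive quadratic form whose Euler--Lagrange operator has leading part $A^2$ plus terms of lower order in $A$, with indicial roots at $y=0$ equal to $0,\ 2\gamma-2,\ 2,\ 2\gamma$ --- distinct, and precisely the exponents in the definition of $\mC^\gamma$. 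I would check that the Frobenius recursion resolves the resonances at $2$ and $2\gamma$ without logarithms, so every solution expands as $c_0+c_1 y^{2\gamma-2}+c_2 y^2+c_3 y^{2\gamma}+o(y^{2\gamma})$; and since the four characteristic exponents at $y=\infty$ split into two of positive and two of negative real part, exactly two solutions decay there. An integration-by-parts argument shows that a finite-energy decaying solution with $c_0=c_1=0$ vanishes, so the decaying solutions are parametrized bijectively by $(a,b):=(c_0,c_1)$; write $E^{\mathrm{can}}_\lambda(a,b)$ for the corresponding one. It has, in $y$, the asymptotics required for membership in $\mC^\gamma$, and it minimizes $\mE_\lambda$ over all finite-energy functions with these coefficients. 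A scaling analysis at the scales $y\sim1$ and $y\sim\langle\lambda\rangle^{-1}$, together with the fact that the associated boundary form is a fixed positive-definite $2\times2$ matrix --- positive definite because no nonzero $\alpha+\beta y^{2\gamma-2}$ solves the Euler--Lagrange equation --- would give $\mE_\lambda(E^{\mathrm{can}}_\lambda(a,b))\asymp\langle\lambda\rangle^{2\gamma}|a|^2+\langle\lambda\rangle^{4-2\gamma}|b|^2$ with constants independent of $\lambda$; consequently $\mE_\lambda(v)\ge\mE_\lambda(E^{\mathrm{can}}_\lambda(a,b))\asymp\langle\lambda\rangle^{2\gamma}|a|^2+\langle\lambda\rangle^{4-2\gamma}|b|^2$ for every finite-energy $v$ with coefficients $(a,b)$.

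\emph{Assembling the trace and extension maps.} Given $U\in\mC_{f,\psi}^\gamma$, after localizing, the $y^0$- and $y^{2\gamma-2}$-coefficients of $\hat U(\xi,\cdot)$ in each chart equal $\hat f(\xi)$ and $\hat\psi(\xi)$ of the localized data, so the fiberwise lower bound, Plancherel, and the equivalences $\|g\|_{H^s}^2\asymp\int_{\bR^n}\langle\xi\rangle^{2s}|\hat g(\xi)|^2\,d\xi$ yield
\[
 \|f\|_{H^\gamma}^2+\|\psi\|_{H^{2-\gamma}}^2\ \lesssim\ \int_{\bR^n}\mE_\lambda(\hat U(\xi,\cdot))\,d\xi\ \lesssim\ \|U\|_{W^{2,2}}^2 ;
\]
summing over the partition of unity proves the trace inequality on $\mC^\gamma$, and since $\mC^\gamma$ is dense in $W^{2,2}(\oX,\rho^m\dvol)$ by definition, the map $U\mapsto(f,\psi)$ extends to a unique bounded operator $\Tr$ with the stated values. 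For the extension I would set $E(f,\psi):=\sum_\alpha E_\alpha(\chi_\alpha f,\chi_\alpha\psi)$, with $E_\alpha$ the pull-back to $\oX$ of the model operator built fiberwise from $E^{\mathrm{can}}_{|\xi|}$ and cut off in $y$; the fiberwise bound gives $\|E(f,\psi)\|_{W^{2,2}}\lesssim\|f\|_{H^\gamma}+\|\psi\|_{H^{2-\gamma}}$, and since $E^{\mathrm{can}}_\lambda(a,b)$ realizes the coefficients $(a,b)$ and lies in $\mC^\gamma$, $\Tr\circ E$ is the identity on the dense subspace $C^\infty(M)\oplus C^\infty(M)$, hence everywhere.

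\emph{The main obstacle.} The hard part is the one-dimensional weighted analysis of the second step: showing that every finite-energy fiber function genuinely admits the two-term expansion $a+by^{2\gamma-2}+o(y^{2\gamma-2})$ at $y=0$ --- a weighted Hardy / ODE-regularity statement for $A$ at its singular point, including the verification that the potential logarithms are absent --- and pinning down the sharp frequency weights $\langle\lambda\rangle^{2\gamma}$ and $\langle\lambda\rangle^{4-2\gamma}$, which relies on the structure of the decaying solutions of the modified-Bessel-type Euler--Lagrange equation and on the uniform-in-$\lambda$ positive definiteness of the boundary form furnished by the scaling argument. By contrast the localization, the perturbation estimates for the non-flat metric, and the change from $\rho$ to $r$ are routine, since the error terms always carry surplus powers of $r$ that are controlled by the lower-order part of the $W^{2,2}$-norm.
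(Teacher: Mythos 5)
Your proposal is sound in outline but takes a genuinely different route from the paper. The paper reduces to the half-space model exactly as you do, but there it invokes Theorem~\ref{thm:inequality2} together with the biharmonic-extension method of Yang~\cite{Yang2013} to obtain the sharp lower bound~\eqref{eqn:energy}
\[
\int_{\bR_+^{n+1}}\bigl|\nabla^2U+my^{-1}(\partial_yU)\,dy\otimes dy\bigr|^2\,y^m
\;\gtrsim\;
\bigl\lV(-\Delta)^{\gamma/2}f\bigr\rV_2^2+\bigl\lV(-\Delta)^{(2-\gamma)/2}\psi\bigr\rV_2^2
\]
on $\mC_{f,\psi}^\gamma$, and then constructs $E$ directly by solving the fourth-order boundary value problem~\eqref{eqn:ext}, with boundedness of $E$ supplied by the equality case of~\eqref{eqn:energy}; the sharp constants $\frac{8\gamma(\gamma-1)^2}{d_\gamma}$ and $\frac{d_\gamma}{2\gamma}$ come for free from the known formula for $P_{2\gamma}$ and $P_{4-2\gamma}$ on $\bR^n$. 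Your plan replaces this by a self-contained tangential Fourier transform plus fiberwise weighted ODE analysis: indicial roots $\{0,2\gamma-2,2,2\gamma\}$, absence of resonant logarithms, two decaying solutions at infinity, and a scaling argument giving $\mE_\lambda\asymp\langle\lambda\rangle^{2\gamma}|a|^2+\langle\lambda\rangle^{4-2\gamma}|b|^2$. This is more elementary (it avoids reliance on the fractional GJMS machinery and the results of Sections~\ref{sec:boundary}--\ref{sec:inequality}, thereby sidestepping a mild logical dependency: the paper's proof of Theorem~\ref{thm:trace_rn} cites Theorem~\ref{thm:inequality2}, whose proof in turn refers to $W^{2,2}$ minimizers), but you must pay for it by verifying uniform-in-$\lambda$ positive definiteness of the $2\times2$ boundary form and the non-resonance at the integer exponent $2$ — facts the paper inherits automatically from the classical extension results. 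One calibration: your ``main obstacle'' paragraph overstates what is required. You do \emph{not} need to show that every finite-energy fiber function admits the two-term expansion $a+by^{2\gamma-2}+o(y^{2\gamma-2})$; you only need the fiberwise inequality on $\mC^\gamma$ (where the expansion is part of the definition) and density of $\mC^\gamma$ in $W^{2,2}$, exactly as your ``Assembling'' paragraph does correctly. Both approaches yield the same theorem; yours is longer but more portable and would extend to situations where a convenient formula for the boundary operator on the model is unavailable.
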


The proof of Theorem~\ref{thm:trace} involves two steps.  First, we prove the corresponding result in upper half space $(\overline{\bR_+^{n+1}},dy^2\oplus dx^2,y,m,1)$ for $\bR_+^{n+1}=(0,\infty)\times\bR^n$ and $y$ the standard coordinate on $(0,\infty)$.  Second, we use coordinate charts to pull the Euclidean result back to $\gamma$-admissible smooth metric measure spaces.  Indeed, the second step is routine, and the proof will be omitted.  The first step is carried out via the extension theorems for the fractional Laplacian~\cite{CaffarelliSilvestre2007,Yang2013}.  To that end, define $\mC_{f,\psi}^\gamma$ and $\mC^\gamma$ as in Subsection~\ref{subsec:bg/smms} using Schwartz functions to obtain the completion $W^{2,2}(\bR_+^{n+1},y^m\dvol)$, and recall that the $H^\gamma$- and $H^{2-\gamma}$-norms defined in Subsection~\ref{subsec:bg/smms} are equivalent to the ones defined via Fourier transform.

\begin{thm}
 \label{thm:trace_rn}
 Fix $\gamma\in(1,2)$ and set $m=3-2\gamma$.  There is a unique bounded linear operator
 \begin{equation}
  \label{eqn:trace_mapping}
  \Tr\colon W^{2,2}(\bR_+^{n+1},y^m\dvol) \to H^\gamma(\bR^n) \oplus H^{2-\gamma}(\bR^n)
 \end{equation}
 such that $\Tr(U)=(f,\psi)$ for any $U\in\mC_{f,\psi}^\gamma$.  Moreover, there is a bounded linear operator
 \begin{equation}
  \label{eqn:E_ext}
  E\colon H^\gamma(\bR^n) \oplus H^{2-\gamma}(\bR^n) \to W^{2,2}(\bR_+^{n+1},y^m\dvol)
 \end{equation}
 such that $\Tr\circ E$ is the identity map on $H^\gamma(\bR^n)\oplus H^{2-\gamma}(\bR^n)$.
\end{thm}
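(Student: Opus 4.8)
The plan is to pass to the partial Fourier transform in the tangential variable $x\in\bR^n$. For $U\in\mC_{f,\psi}^\gamma$, writing $\hat U(y,\xi)$ for its Fourier transform in $x$, Plancherel turns $\lVert U\rVert_{W^{2,2}}^2$ into $\int_{\bR^n}\int_0^\infty Q_\xi(\hat U)\,y^m\,dy\,d\xi$, where $Q_\xi(\hat U)=\lvert\partial_y^2\hat U+my^{-1}\partial_y\hat U\rvert^2+2\lvert\xi\rvert^2\lvert\partial_y\hat U\rvert^2+\lvert\xi\rvert^4\lvert\hat U\rvert^2+\lvert\partial_y\hat U\rvert^2+\lvert\xi\rvert^2\lvert\hat U\rvert^2+\lvert\hat U\rvert^2$ is obtained by replacing tangential derivatives by multiplication by $\xi$ and using the modified $(y,y)$-component of the Hessian. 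After the substitution $t=\lvert\xi\rvert y$ the inner integral splits, for $\xi\neq0$, as $\lvert\xi\rvert^{2\gamma}\mF(V)+\lvert\xi\rvert^{2\gamma-2}\int_0^\infty(\lvert V'\rvert^2+\lvert V\rvert^2)t^m\,dt+\int_0^\infty\lvert\hat U\rvert^2y^m\,dy$, where $V(t)=\hat U(t/\lvert\xi\rvert,\xi)$ and $\mF(V)=\int_0^\infty\bigl(\lvert V''+mt^{-1}V'\rvert^2+2\lvert V'\rvert^2+\lvert V\rvert^2\bigr)t^m\,dt$. A short integration by parts shows $\mF$ is, modulo boundary and lower-order terms, the Dirichlet energy of the one-dimensional model operator $\bigl(-\partial_t^2-mt^{-1}\partial_t+1\bigr)^2$, which factors into the two second-order model operators governing the $\gamma$- and the $(2-\gamma)$-extension problems of Caffarelli--Silvestre and Yang, in keeping with the factorization behind~\eqref{eqn:paneitz_factorization}. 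Its indicial roots at $t=0$ are $0,\ 2\gamma-2,\ 2,\ 2\gamma$, matching the expansion~\eqref{eqn:mC1f_asympt}.

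Granting this, I would first establish the trace bound $\lVert f\rVert_{H^\gamma(\bR^n)}^2+\lVert\psi\rVert_{H^{2-\gamma}(\bR^n)}^2\lesssim\lVert U\rVert_{W^{2,2}}^2$ for $U\in\mC_{f,\psi}^\gamma$. Projecting the rescaled profile $V$ onto the two slow indicial roots recovers $\hat f(\xi)=V(0)$ and $\hat\psi(\xi)$ (its $t^{2\gamma-2}$-coefficient), and the sharp one-dimensional weighted inequalities contained in~\cite{CaffarelliSilvestre2007,Yang2013} give $\mF(V)\gtrsim\lvert V(0)\rvert^2+\lvert\hat\psi(\xi)\rvert^2$ with constant independent of $\xi$. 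Multiplying by $\lvert\xi\rvert^{2\gamma}$ and integrating yields $\int_{\bR^n}\bigl(\lvert\xi\rvert^{2\gamma}\lvert\hat f\rvert^2+\lvert\xi\rvert^{2(2-\gamma)}\lvert\hat\psi\rvert^2\bigr)\,d\xi\lesssim\lVert U\rVert_{W^{2,2}}^2$, and a routine low-frequency argument — using the one-dimensional weighted trace inequality, which is available since $m=3-2\gamma<1$, together with the zeroth-order term $\int U^2y^m$ — upgrades these to the inhomogeneous norms. Since $\mC^\gamma$ is dense in $W^{2,2}(\bR_+^{n+1},y^m\dvol)$ by definition and $H^\gamma(\bR^n)$, $H^{2-\gamma}(\bR^n)$ are complete, $\Tr$ extends uniquely to a bounded operator as in~\eqref{eqn:trace_mapping}.

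For the extension map~\eqref{eqn:E_ext} the plan is to fix, once and for all, two profiles $\Phi_0,\Phi_1\in C^\infty((0,\infty))$: $\Phi_0$ with expansion $1+O(t^2)+(\mathrm{const})\,t^{2\gamma}+\dotsb$ at $t=0$ (no $t^{2\gamma-2}$-term) and rapid decay at $t=\infty$, and $\Phi_1$ with leading behaviour $t^{2\gamma-2}$ and no constant term at $t=0$ and likewise decaying at $t=\infty$; both can be built from the recessive solutions of the two second-order factors, which are explicit modified-Bessel functions. Setting $\widehat{E(f,\psi)}(y,\xi)=\hat f(\xi)\Phi_0(\lvert\xi\rvert y)+\lvert\xi\rvert^{-(2\gamma-2)}\hat\psi(\xi)\Phi_1(\lvert\xi\rvert y)$, reading off the $t\to0$ expansions gives $\Tr\circ E=\id$, and the same rescaling identity used above — together with the cancellation of the $t^{2\gamma-4}$ singularity in $\Phi_j''+mt^{-1}\Phi_j'$, which is exactly the cancellation for which the modified Hessian in~\eqref{eqn:w22} is designed — gives $\lVert E(f,\psi)\rVert_{W^{2,2}}^2\lesssim\lVert f\rVert_{H^\gamma}^2+\lVert\psi\rVert_{H^{2-\gamma}}^2$ after splitting into the ranges $\lvert\xi\rvert\lesssim1$ and $\lvert\xi\rvert\gtrsim1$.

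The hard part will be the construction and bookkeeping for $\Phi_0$ and $\Phi_1$: one must check simultaneously that (i) their $t=0$ expansions involve only the indicial roots $0,2\gamma-2,2,2\gamma$ with the prescribed leading coefficients, so that the two traces genuinely decouple; (ii) they, and the combinations appearing in $\mF$, decay fast enough at $t=\infty$ that every rescaled integral converges uniformly in $\xi$; and (iii) the modified-Hessian combination $\Phi_j''+mt^{-1}\Phi_j'$ is square-integrable against $t^m\,dt$ near $0$. All three are encoded in the modified-Bessel description of the recessive solutions of the second-order factors, but assembling these into the fourth-order profiles and tracking constants is where the work lies; by contrast, the reduction from $\gamma$-admissible smooth metric measure spaces to the half-space model via coordinate charts, and the density--uniqueness argument, are routine.
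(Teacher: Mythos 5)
Your proposal is essentially correct but takes a genuinely different route from the paper. The paper's proof is short and leans on the machinery already built: it cites the energy inequality of Theorem~\ref{thm:inequality2} together with the proof of Yang's extension theorem to obtain~\eqref{eqn:preenergy} for $U\in\mC_{f,\psi}^\gamma$, performs an integration by parts converting $\int(\Delta_\phi U)^2$ into the modified-Hessian norm, and reads off~\eqref{eqn:energy}; the extension map is then defined by solving the biharmonic boundary value problem~\eqref{eqn:ext}. You instead give a self-contained Fourier-analytic argument: Plancherel in $x$, the scaling $t=\lvert\xi\rvert y$, a one-dimensional estimate $\mF(V)\gtrsim\lvert V(0)\rvert^2+\lvert c_{2\gamma-2}(V)\rvert^2$ for the $t^{2\gamma-2}$-coefficient, and an explicit extension built from fixed radial profiles. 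This buys you independence from Theorem~\ref{thm:inequality2} (whose statement already refers to $\Tr$ on $W^{2,2}$, so the paper's presentation has a mild apparent circularity that your route sidesteps), at the cost of having to carry out the modified-Bessel bookkeeping for $\Phi_0,\Phi_1$ and the low-frequency patch, which you correctly identify as the remaining work. Two small imprecisions, neither fatal: (1) the claimed factorization of $(-\partial_t^2-mt^{-1}\partial_t+1)^2$ into the CS/Yang second-order operators for $\gamma$ and $2-\gamma$ is not literal in the flat model --- it is a perfect square, and the flat CS weights $1-2\gamma$ and $2\gamma-3$ are not equal to $m=3-2\gamma$, so the analogue of~\eqref{eqn:paneitz_factorization} degenerates here; this is only motivation and the indicial-root computation $\{0,\,2\gamma-2,\,2,\,2\gamma\}$ that actually drives the argument is correct. (2) The $t^{2\gamma-2}$-coefficient of $V(t)=\hat U(t/\lvert\xi\rvert,\xi)$ is $\lvert\xi\rvert^{-(2\gamma-2)}\hat\psi(\xi)$, not $\hat\psi(\xi)$; after multiplying by $\lvert\xi\rvert^{2\gamma}$ this yields $\lvert\xi\rvert^{2(2-\gamma)}\lvert\hat\psi\rvert^2$ as you claim, so the exponents in your final displayed inequality are right, but the intermediate notation should be tightened. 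With those caveats, the plan is sound and the details you flag as remaining (the recessive-solution construction of $\Phi_0,\Phi_1$, their decay, and the low-frequency inhomogeneous upgrade) are indeed the places where work is needed.
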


\begin{proof}
 Combining Theorem~\ref{thm:inequality2} and the proof of~\cite[Theorem~3.1]{Yang2013}, we find that for any $U\in\mC_{f,\psi}^\gamma$, it holds that
 \begin{multline}
  \label{eqn:preenergy}
  \int_{\bR_+^{n+1}}\left(\Delta_\phi U\right)^2 \geq \frac{8\gamma(\gamma-1)}{d_\gamma}\lV (-\Delta)^{\gamma/2}f\rV_2^2 \\ + 4\oint_{\bR^n}\lp\onabla f,\onabla\psi\rp + \frac{d_\gamma}{2\gamma(\gamma-1)}\lV(-\Delta)^{(2-\gamma)/2}\psi\rV_2^2
 \end{multline}
 with equality if and only if $U$ is the unique solution to
 \begin{equation}
  \label{eqn:ext}
  \begin{cases}
   \Delta_\phi^2U = 0, & \text{in $(0,\infty)\times\bR^n$}, \\
   U(0,x) = f(x), & \text{for all $x\in\bR^n$}, \\
   \displaystyle\lim_{y\to0} y^m\partial_yU(y,x) = 2(\gamma-1)\psi, & \text{for all $x\in\bR^n$}
  \end{cases}
 \end{equation}
 with $\Delta_\phi=\Delta+my^{-1}\partial_y$.  Using integration by parts, it is straightforward to check that
 \[ \int_{\bR_+^{n+1}} \left|\nabla^2U + my^{-1}(\partial_yU)^2dy\otimes dy\right|^2 = \int_{\bR_+^{n+1}} \left(\Delta_\phi U\right)^2 - 4(\gamma-1)\oint_{\bR^n}\lp\onabla f,\onabla\psi\rp \]
 for all $U\in\mC_{f,\psi}^\gamma$.  Combining this with~\eqref{eqn:preenergy} yields
 \begin{multline}
  \label{eqn:energy}
  \int_{\bR_+^{n+1}} \left|\nabla^2U + my^{-1}(\partial_yU)^2dy\otimes dy\right|^2 \\ \geq \frac{8\gamma(\gamma-1)^2}{d_\gamma}\left\lV(-\Delta)^{\gamma/2}f\right\rV_2^2 + \frac{d_\gamma}{2\gamma}\left\lV(-\Delta)^{(2-\gamma)/2}\psi\right\rV_2^2
 \end{multline}
 with the same characterization of the equality case.  It follows from~\eqref{eqn:energy} that $\Tr\colon\mC^\gamma\to H^\gamma\oplus H^{2-\gamma}$ is a bounded linear operator, and hence can be extended uniquely to a bounded linear operator as in~\eqref{eqn:trace_mapping}.  Moreover, the map $E(f,\psi)=U$ obtained by solving~\eqref{eqn:ext} is linear and, by~\eqref{eqn:energy}, bounded, whence can be extended to a bounded linear operator as in~\eqref{eqn:E_ext}.
\end{proof}

\bibliographystyle{abbrv}
\bibliography{../bib}

\newcommand{\noopsort}[1]{}
\begin{thebibliography}{10}

\bibitem{AcheChang2015}
A.~G. Ache and S.-Y.~A. Chang.
\newblock Sobolev-trace inequalities of order four.
\newblock arXiv:1509.06069, \noopsort{2099}preprint.

\bibitem{Beckner1993}
W.~Beckner.
\newblock Sharp {S}obolev inequalities on the sphere and the
  {M}oser-{T}rudinger inequality.
\newblock {\em Ann. of Math. (2)}, 138(1):213--242, 1993.

\bibitem{Branson1985}
T.~P. Branson.
\newblock Differential operators canonically associated to a conformal
  structure.
\newblock {\em Math. Scand.}, 57(2):293--345, 1985.

\bibitem{Branson1997}
T.~P. Branson.
\newblock Spectral theory of invariant operators, sharp inequalities, and
  representation theory.
\newblock In {\em The {P}roceedings of the 16th {W}inter {S}chool ``{G}eometry
  and {P}hysics'' ({S}rn\'\i, 1996)}, number~46, pages 29--54, 1997.

\bibitem{BransonGover2001}
T.~P. Branson and A.~R. Gover.
\newblock Conformally invariant non-local operators.
\newblock {\em Pacific J. Math.}, 201(1):19--60, 2001.

\bibitem{CaffarelliSilvestre2007}
L.~Caffarelli and L.~Silvestre.
\newblock An extension problem related to the fractional {L}aplacian.
\newblock {\em Comm. Partial Differential Equations}, 32(7-9):1245--1260, 2007.

\bibitem{Case2010a}
J.~S. Case.
\newblock Smooth metric measure spaces and quasi-{E}instein metrics.
\newblock {\em Internat. J. Math.}, 23(10):1250110, 36 pp., 2012.

\bibitem{Case2015b}
J.~S. Case.
\newblock Boundary operators associated to the {P}aneitz operator.
\newblock arXiv:1509.08342, \noopsort{2208}preprint.

\bibitem{CaseChang2013}
J.~S. Case and S.-Y.~A. Chang.
\newblock On fractional {G}{J}{M}{S} operators.
\newblock {\em Comm. Pure Appl. Math.}, 69(6):1017--1061, 2016.

\bibitem{ChangGonzalez2011}
S.-Y.~A. Chang and M.~d.~M. Gonz{\'a}lez.
\newblock Fractional {L}aplacian in conformal geometry.
\newblock {\em Adv. Math.}, 226(2):1410--1432, 2011.

\bibitem{ChangQing1997a}
S.-Y.~A. Chang and J.~Qing.
\newblock The zeta functional determinants on manifolds with boundary. {I}.
  {T}he formula.
\newblock {\em J. Funct. Anal.}, 147(2):327--362, 1997.

\bibitem{ChruscielDelayLeeSkinner2005}
P.~T. Chru{\'s}ciel, E.~Delay, J.~M. Lee, and D.~N. Skinner.
\newblock Boundary regularity of conformally compact {E}instein metrics.
\newblock {\em J. Differential Geom.}, 69(1):111--136, 2005.

\bibitem{CotsiolisTavoularis2004}
A.~Cotsiolis and N.~K. Tavoularis.
\newblock Best constants for {S}obolev inequalities for higher order fractional
  derivatives.
\newblock {\em J. Math. Anal. Appl.}, 295(1):225--236, 2004.

\bibitem{Escobar1988}
J.~F. Escobar.
\newblock Sharp constant in a {S}obolev trace inequality.
\newblock {\em Indiana Univ. Math. J.}, 37(3):687--698, 1988.

\bibitem{Escobar1990}
J.~F. Escobar.
\newblock Uniqueness theorems on conformal deformation of metrics, {S}obolev
  inequalities, and an eigenvalue estimate.
\newblock {\em Comm. Pure Appl. Math.}, 43(7):857--883, 1990.

\bibitem{Escobar1992a}
J.~F. Escobar.
\newblock Conformal deformation of a {R}iemannian metric to a scalar flat
  metric with constant mean curvature on the boundary.
\newblock {\em Ann. of Math. (2)}, 136(1):1--50, 1992.

\bibitem{Escobar1992ac}
J.~F. Escobar.
\newblock Addendum: ``{C}onformal deformation of a {R}iemannian metric to a
  scalar flat metric with constant mean curvature on the boundary'' [{A}nn.\ of
  {M}ath.\ (2) {\bf 136} (1992), no.\ 1, 1--50; {MR}1173925 (93e:53046)].
\newblock {\em Ann. of Math. (2)}, 139(3):749--750, 1994.

\bibitem{FrankLieb2012b}
R.~L. Frank and E.~H. Lieb.
\newblock A new, rearrangement-free proof of the sharp
  {H}ardy-{L}ittlewood-{S}obolev inequality.
\newblock In {\em Spectral Theory, Function Spaces and Inequalities}, volume
  219 of {\em Oper. Theory Adv. Appl.}, pages 55--67. Birkh\"auser Verlag,
  Basel, 2012.

\bibitem{GonzalezQing2010}
M.~d.~M. Gonz{\'a}lez and J.~Qing.
\newblock Fractional conformal {L}aplacians and fractional {Y}amabe problems.
\newblock {\em Anal. PDE}, 6(7):1535--1576, 2013.

\bibitem{GrahamLee1991}
C.~R. Graham and J.~M. Lee.
\newblock Einstein metrics with prescribed conformal infinity on the ball.
\newblock {\em Adv. Math.}, 87(2):186--225, 1991.

\bibitem{GrahamZworski2003}
C.~R. Graham and M.~Zworski.
\newblock Scattering matrix in conformal geometry.
\newblock {\em Invent. Math.}, 152(1):89--118, 2003.

\bibitem{Grant2003}
D.~H. Grant.
\newblock A conformally invariant third order {N}eumann-type operator for
  hypersurfaces.
\newblock Master's thesis, The University of Auckland, 2003.

\bibitem{GuillarmouGuillope2007}
C.~Guillarmou and L.~Guillop{\'e}.
\newblock The determinant of the {D}irichlet-to-{N}eumann map for surfaces with
  boundary.
\newblock {\em Int. Math. Res. Not. IMRN}, (22):Art. ID rnm099, 26, 2007.

\bibitem{HebeyVaugon1996}
E.~Hebey and M.~Vaugon.
\newblock Meilleures constantes dans le th\'eor\`eme d'inclusion de {S}obolev.
\newblock {\em Ann. Inst. H. Poincar\'e Anal. Non Lin\'eaire}, 13(1):57--93,
  1996.

\bibitem{JinXiong2012}
T.~Jin and J.~Xiong.
\newblock Sharp constants in weighted trace inequalities on {R}iemannian
  manifolds.
\newblock {\em Calc. Var. Partial Differential Equations}, 48(3-4):555--585,
  2013.

\bibitem{Juhl2009}
A.~Juhl.
\newblock {\em Families of conformally covariant differential operators,
  {$Q$}-curvature and holography}, volume 275 of {\em Progress in Mathematics}.
\newblock Birkh\"auser Verlag, Basel, 2009.

\bibitem{Lee1995}
J.~M. Lee.
\newblock The spectrum of an asymptotically hyperbolic {E}instein manifold.
\newblock {\em Comm. Anal. Geom.}, 3(1-2):253--271, 1995.

\bibitem{Lieb1983}
E.~H. Lieb.
\newblock Sharp constants in the {H}ardy-{L}ittlewood-{S}obolev and related
  inequalities.
\newblock {\em Ann. of Math. (2)}, 118(2):349--374, 1983.

\bibitem{Triebel1978}
H.~Triebel.
\newblock {\em Interpolation theory, function spaces, differential operators},
  volume~18 of {\em North-Holland Mathematical Library}.
\newblock North-Holland Publishing Co., Amsterdam-New York, 1978.

\bibitem{Yang2013}
R.~Yang.
\newblock On higher order extensions for the fractional {L}aplacian.
\newblock arXiv:1302.4413, \noopsort{2099}preprint.

\end{thebibliography}
\end{document}